\title[{\tiny stability conditions on CY double/triple solids}]
{stability conditions on Calabi-Yau double/triple solids}
\date{}
\author{Naoki Koseki}
\theoremstyle{plain}
\newtheorem{thm}{Theorem}[section]
\newtheorem{prop}[thm]{Proposition}
\newtheorem{lem}[thm]{Lemma}
\newtheorem{cor}[thm]{Corollary}
\newtheorem*{thm*}{Theorem}
\theoremstyle{definition}
\newtheorem{defin}[thm]{Definition}
\newtheorem*{NaC}{Notation and Convention}
\newtheorem*{ACK}{Acknowledgement}
\theoremstyle{remark}
\newtheorem{rmk}[thm]{Remark}
\newtheorem{ques}[thm]{Question}
\newtheorem{assump}[thm]{Assumption}
\DeclareMathOperator{\ch}{ch}
\DeclareMathOperator{\td}{td}
\newcommand{\bP}{\mathbb{P}}
\newcommand{\bC}{\mathbb{C}}
\newcommand{\bR}{\mathbb{R}}
\newcommand{\bZ}{\mathbb{Z}}
\newcommand{\mcA}{\mathcal{A}}
\newcommand{\mcO}{\mathcal{O}}
\newcommand{\mcF}{\mathcal{F}}
\newcommand{\mcH}{\mathcal{H}}
\newcommand{\mcT}{\mathcal{T}}
\DeclareMathOperator{\Hom}{Hom}
\DeclareMathOperator{\Coh}{Coh}
\DeclareMathOperator{\Supp}{Supp}
\DeclareMathOperator{\ext}{ext}
\DeclareMathOperator{\Ext}{Ext}
\DeclareMathOperator{\Stab}{Stab}
\DeclareMathOperator{\cl}{cl}
\DeclareMathOperator{\GL}{GL}
\DeclareMathOperator{\Cone}{Cone}
\DeclareMathOperator{\lcm}{lcm}
\begin{document}
\maketitle

\begin{abstract}
In this paper, we prove a stronger form 
of the Bogomolov-Gieseker (BG) inequality 
for stable sheaves 
on two classes of Calabi-Yau threefolds, 
namely, weighted hypersurfaces inside 
the weighted projective spaces 
$\bP(1, 1, 1, 1, 2)$ and $\bP(1, 1, 1, 1, 4)$. 

Using the stronger BG inequality 
as a main technical tool, 
we construct open subsets in the spaces of 
Bridgeland stability conditions on these Calabi-Yau threefolds. 
\end{abstract}

\setcounter{tocdepth}{1}
\tableofcontents

\section{Introduction} \label{section:introduction}
\subsection{Motivation and Results}
Since Bridgeland \cite{bri07} defined the notion of 
stability conditions on derived categories, 
its construction on a given threefold has been 
an important open problem. 
It turned out that, to solve this problem, 
we need a Bogomolov-Gieseker (BG) type inequality, 
involving the third Chern character, 
for certain stable objects in the derived category 
\cite{bms16, bmt14a, bmsz17}. 
There are several classes of threefolds on which 
we know the existence of Bridgeland stability conditions 
\cite{bms16, bmt14a, bmsz17, kos17, kos20, li19b, li19a, 
liu21a, liu21b, mp16a, mp16b, mac14, piy17, sch14, sun19}. 
For $K$-trivial threefolds, the only known cases are 
the quintic threefolds \cite{li19b}, 
Abelian threefolds, 
and their {\' e}tale quotients \cite{bms16, mp16a, mp16b}. 

Among them, Li \cite{li19b} recently treated quintic threefolds, 
which is one of the most important cases 
for Mirror Symmetry. 
The crucial step in his arguments is 
to establish the improvement of 
the classical BG inequality 
for torsion free slope stable sheaves. 
Recall that a version of 
the classical BG inequality 
is the inequality 
\begin{equation} \label{eq:BGintro}
\frac{H\ch_2(E)}{H^3\ch_0(E)} 
\leq \frac{1}{2}\left( \frac{H^2\ch_1(E)}{H^3\ch_0(E)} \right)^2, 
\end{equation}
where $E$ is a slope stable sheaf 
with respect to an ample divisor $H$. 
For del Pezzo and K3 surfaces, 
we can easily get the inequality 
stronger than (\ref{eq:BGintro}), 
simply by using the Serre duality. 
In contrast, such an improvement 
of the BG inequality on Calabi-Yau threefolds 
is highly non-trivial. 

When the first draft of this paper was submitted, 
the arguments in \cite{li19b} 
have been applied only for quintic threefolds. 
Very recently, Liu \cite{liu21a} treated Calabi-Yau complete intersections 
of quadratic and quartic hypersurfaces in $\bP^5$ via a similar method. 

The goal of the present paper is 
to extend it to two other examples of 
Calabi-Yau threefolds, 
namely, general weighted hypersurfaces 
in the weighted projective spaces 
$\bP(1, 1, 1, 1, 2)$ and 
$\bP(1, 1, 1, 1, 4)$. 
We call them as 
{\it triple/double cover CY3}, 
since they have finite morphisms 
to $\bP^3$ of degree $3$, $2$, respectively. 
The following is our main result: 

\begin{thm}[Theorems \ref{thm:strongBG}, \ref{thm:BGonT2}] \label{thm:mainintro}
Let $X$ be a double or triple cover CY3, 
$H$ the primitive ample divisor, 
and $E$ a slope stable sheaf 
with slope $\mu \in [-1, 1]$. 
Then the inequality 
\begin{equation} \label{eq:strongBGintro}
\frac{H\ch_2(E)}{H^3\ch_0(E)}
\leq \Xi\left(\left|
	\frac{H^2\ch_1(E)}{H^3\ch_0(E)}
	\right|\right)
\end{equation}
holds. Here the function 
$\Xi$ is defined as follows. 
\[
\Xi(t):=\left\{ \begin{array}{ll}
t^2-t & (t \in [0, 1/4]) \\ 
3t/4-3/8 & (t \in [1/4, 1/2]) \\
t/4-1/8 & (t \in [1/2, 3/4]) \\
t^2-1/2 & (t \in [3/4, 1]). 
\end{array} \right.
\]
\end{thm}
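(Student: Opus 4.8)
The plan is to follow the method introduced by Li \cite{li19b} for the quintic threefold, exploiting in addition the finite morphism $\pi\colon X\to\bP^3$ of degree $d\in\{2,3\}$ with $H=\pi^*\mcO_{\bP^3}(1)$. Throughout we write $\mu(E)=H^2\ch_1(E)/(H^3\ch_0(E))$ and $\overline{\ch}_2(E)=H\ch_2(E)/(H^3\ch_0(E))$, so the target inequality is $\overline{\ch}_2(E)\le\Xi(|\mu(E)|)$.

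\emph{Reductions.} Replacing $E$ by $E^{\vee\vee}$ preserves $\ch_0$, $\ch_1$ and $\mu$-stability while only increasing $H\ch_2$ (the sheaf $E^{\vee\vee}/E$ is supported in codimension $\ge 2$ and contributes a nonnegative multiple of $H$ to $H\ch_2$), so we may assume $E$ reflexive. Since $\Pic(X)=\bZ H$ by Lefschetz, the slope cannot be shifted modulo $1$ by twisting — this is exactly why the sharp bound is piecewise rather than a single parabola. However $E\mapsto E^\vee$ fixes $H\ch_2$ and $\ch_0$, negates $H^2\ch_1$, and preserves $\mu$-stability, and $E\mapsto E^\vee(1)$ sends $\mu$ to $1-\mu$ and $\overline{\ch}_2$ to $\overline{\ch}_2-\mu+1/2$; a direct check shows $\Xi$ is compatible with both (indeed $\Xi(\mu)=\Xi(1-\mu)+\mu-1/2$ on $[0,1]$), so it suffices to treat $\mu\in[0,1/2]$, where only the branches $\Xi(t)=t^2-t$ on $[0,1/4]$ and $\Xi(t)=3t/4-3/8$ on $[1/4,1/2]$ occur, and in fact $\Xi=\max\{t^2-t,\ 3t/4-3/8\}$ there. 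The rank-one case $E=\mcO_X$ (where $\overline{\ch}_2=0=\Xi(0)$) is trivial, so we induct on $\rk E\ge 2$.

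\emph{The two bounds.} For $\mu\in[0,1/4]$ I expect the quadratic branch $t^2-t$ to come from sharpening the Bogomolov inequality on a general surface section $Y=\pi^{-1}(\text{plane})\in|H|$: by a restriction theorem valid for general $Y\in|H|$ (using the positivity of $H$ and the cover structure) $E|_Y$ is $\mu$-semistable, and $Y$ is a degree-$d$ cover of $\bP^2$ with $K_Y=H|_Y$ ample, so one improves \eqref{eq:BGintro} using the vanishings $H^0(Y,E^\vee|_Y)=0$ and, from $0\le\mu$, the smallness of $H^0(Y,E|_Y)$, together with Serre duality on $Y$ and the identity $\mu_Y(E|_Y)=d\,\mu(E)$ — if necessary, descending further to a general curve $C=\pi^{-1}(\text{line})\subset Y$. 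For $\mu\in[1/4,1/2]$ the bound is affine, and I would obtain it on $X$ via Hirzebruch--Riemann--Roch: the first difference $\chi(\mcO_X(k),E)-\chi(\mcO_X(k-1),E)$ equals $H\ch_2(E)$ plus an explicit $k$-dependent combination of $H^2\ch_1(E)$, $H^3\ch_0(E)$ and $\ch_0(E)\,(c_2(X)\cdot H)$ — crucially $\ch_3(E)$ cancels — and one bounds the two Euler characteristics using Serre duality on the Calabi--Yau $X$, the vanishings $\Hom(\mcO_X(1),E)=0$ and $\Hom(E,\mcO_X(-1))=0$ (hence $\Ext^3(\mcO_X(-1),E)=0$), which follow from $\mu\in[0,1]$ and $\mu$-stability, and controlled bounds on $\hom(\mcO_X(j),E)$ for $j=0,1$ in terms of $\ch_0(E)$ (proved by restriction to hyperplane sections and induction). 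The corner at $t=1/4$ is then pinned down by continuity.

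\emph{Main obstacle.} As in \cite{li19b}, the crux is to make this tight enough to reach exactly $\Xi$ and not a strictly weaker envelope; a single careless estimate loses a constant in one of the branches. I expect the delicate points to be: (i) controlling the $H$-degrees of all the codimension-$2$ defect sheaves that enter — from $E^{\vee\vee}/E$, from restriction to $Y$, and from images and cokernels of the extremal maps below; and (ii) bounding the term $c_2(X)\cdot H$ and the higher cohomology of $E$ and of $E^\vee$ uniformly in $\rk E$ in the Riemann--Roch step. The cleanest route, I believe, is: on each slope subinterval fix a single extremal short exact sequence built from $\mcO_X$, $\mcO_X(1)$, and structure sheaves of divisors in $|H|$ and of curves; show that a hypothetical minimal-rank counterexample must admit a nonzero map to or from such a sheaf (this is where a forced non-vanishing of $\Hom$, read off from the violated inequality through Riemann--Roch, enters); and analyze the resulting subsheaf or quotient by the inductive hypothesis. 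The optimization over which extremal sequence is binding is precisely what produces the branch $t^2-t$, the branch $3t/4-3/8$, and the corner at $t=1/4$.
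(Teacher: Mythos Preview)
Your reductions (to $\mu\in[0,1/2]$ via $E\mapsto E^\vee$ and $E\mapsto E^\vee(1)$) are correct and match the paper, but your proposed attack on that range misses the paper's central device. Although you cite \cite{li19b}, you are not actually following Li's method: the paper does \emph{not} run Riemann--Roch on $X$, nor restrict to a hyperplane section in $|H|$. Instead it restricts to a surface $T\in|2H|$ and on $T$ writes $\chi(F)\le h^0(F|_C)+h^0(F^\vee(2H_T)|_C)$ for a further curve section $C\subset T$ (see~(\ref{eq:rr})). The entire weight of the argument then rests on a sharp Clifford-type bound for $h^0$ of stable bundles on $C$, obtained by embedding $C$ into a \emph{second} surface $S\cong\bP^1\times\bP^1$---a del Pezzo complete intersection in the ambient weighted $\bP^4$, \emph{not} contained in $X$---pushing $\iota_*F$ forward to $S$, and running an explicit wall-crossing analysis for Bridgeland stability on $S$ using the known strong BG inequality on $\bP^1\times\bP^1$. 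The piecewise shape of $\Xi$, with its corners, emerges from this case analysis (Propositions~\ref{prop:clifford} and~\ref{prop:clifford2}); it is not produced by any optimisation over ``extremal short exact sequences'' on $X$.

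Both of your proposed mechanisms are too coarse to reach $\Xi$. For the affine branch, the difference $\chi(E)-\chi(E(-1))$ on $X$ does kill $\ch_3$, but bounding it requires sharp control of $h^0(E)$ and $h^2(E)$; your ``controlled bounds on $\hom(\mcO_X(j),E)$ by restriction and induction'' is exactly the hard step, and the paper's answer to it is precisely the Clifford bound above---there is no induction-on-rank shortcut. For the quadratic branch $t^2-t$, a surface $Y\in|H|$ has $K_Y=H|_Y$ ample and no strong BG inequality of the required strength is known on it a priori; the paper sidesteps this entirely by passing through $C$ and the del Pezzo $S$, where such an inequality \emph{is} available. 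The ``extremal short exact sequence'' idea is too vague to evaluate, and on a Picard-rank-one Calabi--Yau the only obvious extremal sheaves are the $\mcO_X(k)$, which sit at integer slopes and so cannot by themselves pin down the corners of $\Xi$ strictly inside $(0,1)$.
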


\begin{figure}[htb]
\begin{center}
\begin{tikzpicture}[scale=4]
\draw[->] (-1, 0) -- (1, 0) node[right]{$\frac{H^2\ch_1}{H^3\ch_0}$}; 
\draw[->] (0, -1/2) -- (0, 1) node[right]{$\frac{H\ch_2}{H^3\ch_0}$}; 
\draw[red, thick, domain=-1:-3/4] plot(\x, \x*\x-1/2);
\draw[red, thick, domain=-3/4:-1/2] plot(\x, -1/4*\x-1/8);
\draw[red, thick, domain=-1/2:-1/4] plot(\x, -3/4*\x-3/8); 
\draw[red, thick, domain=-1/4:0] plot(\x, \x*\x+\x); 
\draw[red, thick, domain=0:1/4] plot(\x, \x*\x-\x); 
\draw[red, thick, domain=1/4:1/2] plot(\x, 3/4*\x-3/8); 
\draw[red, thick, domain=1/2:3/4] plot(\x, 1/4*\x-1/8); 
\draw[red, thick, domain=3/4:1] plot(\x, \x*\x-1/2); 
\draw[domain=-1:1] plot(\x, 1/2*\x*\x); 
\end{tikzpicture}
\end{center}
\caption{strong BG inequality on double/triple cover CY3s.} \label{fig:BGonXintro}
\end{figure}
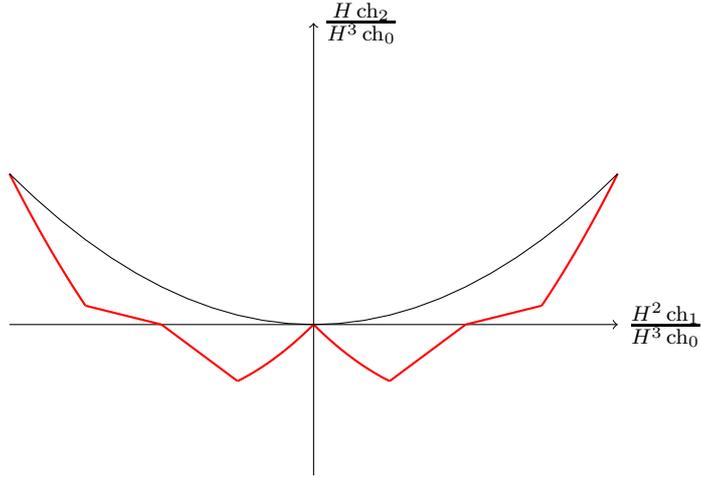

Using this stronger BG inequality, 
we prove the following BG type inequality 
(involving $\ch_3$) 
for $\nu_{\beta, \alpha}$-stable objects, 
which are certain two term complexes 
in the derived category. 
For the precise definition of 
$\nu_{\beta, \alpha}$-stability, 
see Section \ref{section:preliminaries}. 

\begin{thm}[Theorem \ref{thm:reduction}, Corollary \ref{cor:bgconj}] \label{thm:main2intro}
Let $X$ be a double or triple cover CY3, 
take real numbers $\alpha, \beta \in \bR$ with 
$\alpha > 
\frac{1}{2}\beta^2
+\frac{1}{2}(\beta-\lfloor \beta \rfloor)
(\lfloor \beta \rfloor+1-\beta)$. 
Let $E$ be a $\nu_{\beta, \alpha}$-semistable object. 
Then the inequality 
\[
Q^\Gamma_{\alpha, \beta}(E) \geq 0 
\]
holds. 
Here, we put $\Gamma:=\frac{2}{9}H^2$ (resp. $\frac{1}{3}H^2$) 
when $X$ is a triple (resp. double) cover CY3, 
and the quadratic form 
$Q^\Gamma_{\alpha, \beta}$ 
is defined as follows: 
\begin{align*}
Q^\Gamma_{\alpha, \beta}(E)&:=
(2\alpha-\beta^2)\left(
\overline{\Delta}_{H}(E)+3\frac{\Gamma.H}{H^3}
	\left(H^3\ch^\beta_0(E)
	\right)^2
	\right) \\
&\quad +2\left(H\ch^\beta_2(E) \right)\left(
	2H\ch^\beta_2(E)-3\Gamma.H \ch^\beta_0(E)
	\right) \\
&\quad -6\left(H^2\ch^\beta_1(E) \right) \left(
	\ch^\beta_3(E)-\Gamma\ch_1^\beta(E) 
	\right). 
\end{align*}

\end{thm}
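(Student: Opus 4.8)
The plan is to follow the strategy of \cite{bms16} and \cite{li19b}: deduce the tilt-type inequality $Q^\Gamma_{\alpha,\beta}(E)\ge 0$ from the strong Bogomolov--Gieseker inequality for slope-stable sheaves (Theorem~\ref{thm:mainintro}) by an induction on the discriminant
\[
\overline{\Delta}_H(E):=\bigl(H^2\ch_1(E)\bigr)^2-2\bigl(H^3\ch_0(E)\bigr)\bigl(H\ch_2(E)\bigr),
\]
which is a non-negative integer for every $\nu_{\beta,\alpha}$-semistable object by the classical (tilt) Bogomolov--Gieseker inequality. What Theorem~\ref{thm:reduction} isolates is precisely the \emph{reduction}: $Q^\Gamma_{\alpha,\beta}(E)\ge 0$ holds for all $\nu_{\beta,\alpha}$-semistable $E$ in the prescribed region once its large-volume limit is known for slope-semistable sheaves; since that limit is exactly the inequality governed by $\Xi$, Corollary~\ref{cor:bgconj} follows by combining Theorem~\ref{thm:reduction} with Theorem~\ref{thm:mainintro}. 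Throughout one uses the support property for $\nu_{\beta,\alpha}$-stability, local finiteness of walls, and the fact that $\overline{\Delta}_H$ strictly drops along walls, all standard in this setting.

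For the base case $\overline{\Delta}_H(E)=0$, I would invoke the classification of such $\nu_{\beta,\alpha}$-semistable objects: up to shift, twist by $\mcO_X(kH)$, and extension they reduce to a short explicit list --- line-bundle-like slope-semistable sheaves, and sheaves supported in dimension at most one --- on which $Q^\Gamma_{\alpha,\beta}$ can be evaluated directly. This is where the numerical value $\Gamma=\tfrac{2}{9}H^2$ (resp.\ $\tfrac{1}{3}H^2$) is calibrated, so that $Q^\Gamma_{\alpha,\beta}\ge 0$ becomes sharp on the $\mcO_X(kH)$-type objects; the general object with $\overline{\Delta}_H=0$ then follows from the additivity of $Q^\Gamma_{\alpha,\beta}$ on extensions, up to a non-negative cross term.

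For the inductive step assume $\overline{\Delta}_H(E)>0$ and, for contradiction, $Q^\Gamma_{\alpha_0,\beta_0}(E)<0$ at some $(\beta_0,\alpha_0)$ in the region. One studies the locus in the $(\beta,\alpha)$-half-plane where $Q^\Gamma_{\alpha,\beta}(E)<0$; for fixed $\ch(E)$ this is cut out by an explicit low-degree inequality, and one shows that if it meets the tilt-stability region then, after possibly varying $\beta$ as well, it either reaches the large-volume limit $\alpha\to\infty$ or accumulates at a numerical wall for $E$. In the first case $E$ becomes, in the limit, a shift of a slope-semistable sheaf, and $\lim_{\alpha\to\infty}Q^\Gamma_{\alpha,\beta}(E)\ge 0$ is, after dividing by $H^3\ch_0$ and passing to the coordinates $\tfrac{H^2\ch_1}{H^3\ch_0}$ and $\tfrac{H\ch_2}{H^3\ch_0}$, exactly the strong Bogomolov--Gieseker inequality of Theorem~\ref{thm:mainintro} applied to the Harder--Narasimhan factors, contradicting the hypothesis. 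In the second case one gets a short exact sequence $0\to F\to E\to G\to 0$ of $\nu_{\beta,\alpha}$-semistable objects of equal tilt-slope with $\overline{\Delta}_H(F),\overline{\Delta}_H(G)<\overline{\Delta}_H(E)$; the inductive hypothesis gives $Q^\Gamma_{\alpha,\beta}(F),Q^\Gamma_{\alpha,\beta}(G)\ge 0$, and a Hodge-index-type estimate valid along the wall, namely $Q^\Gamma_{\alpha,\beta}(E)\ge Q^\Gamma_{\alpha,\beta}(F)+Q^\Gamma_{\alpha,\beta}(G)$, yields $Q^\Gamma_{\alpha,\beta}(E)\ge 0$, again a contradiction.

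I expect the main obstacle to be the first alternative of the inductive step: the large-volume limit of $Q^\Gamma_{\alpha,\beta}\ge 0$ must coincide with the piecewise inequality encoded by $\Xi$ over the whole slope range $\mu\in[-1,1]$. This is what forces the particular choice of $\Gamma$ and the correction term $\tfrac{1}{2}(\beta-\lfloor\beta\rfloor)(\lfloor\beta\rfloor+1-\beta)$ in the hypothesis on $(\alpha,\beta)$: the four pieces of $\Xi$ --- two ``curved'' ones coming from $\overline{\Delta}_H\ge 0$, and two ``linear'' ones coming from testing $E$ against $\mcO_X(kH)$ and $\mcO_X((k+1)H)$ --- must all be dominated simultaneously by the single quadratic $Q^\Gamma_{\alpha,\beta}\ge 0$. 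Checking this numerical compatibility, rather than the wall-crossing bookkeeping, is the delicate point, and it is exactly where the full strength of Theorem~\ref{thm:mainintro} enters.
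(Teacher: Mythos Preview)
Your wall-crossing framework for the reduction (Theorem~\ref{thm:reduction}) is in the right spirit, but there is a genuine gap in how you connect the strong BG inequality of Theorem~\ref{thm:mainintro} to the $\ch_3$-inequality $Q^\Gamma\ge 0$. You assert that ``the large-volume limit of $Q^\Gamma_{\alpha,\beta}\ge 0$ \dots\ is exactly the inequality governed by $\Xi$.'' This is false: as $\alpha\to\infty$ the dominant term of $Q^\Gamma_{\alpha,\beta}(E)$ is $(2\alpha-\beta^2)\bigl(\overline{\Delta}_H(E)+3\tfrac{\Gamma.H}{H^3}(H^3\ch_0(E))^2\bigr)$, so the limiting inequality is a \emph{weakening} of the classical BG inequality $\overline{\Delta}_H\ge 0$, involving only $\ch_0,\ch_1,\ch_2$ and carrying no information about $\ch_3$. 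The function $\Xi$ never appears as any limit of $Q^\Gamma$. Thus your ``first alternative'' in the inductive step does not produce the contradiction you claim.

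The paper's mechanism is entirely different. The reduction (Theorem~\ref{thm:reduction}) goes not to large volume but to the origin: one must show $Q^\Gamma_{0,0}(E)\ge 0$ for every \emph{BN-stable} object with $\nu_{BN}(E)\in[0,1/2]$. This is still a $\ch_3$-inequality, and the passage from $\Xi$ (a $\ch_2$-bound) to a $\ch_3$-bound is the content of Proposition~\ref{prop:ch2-ch3}. The key idea you are missing is the \emph{universal extension}
\[
E\to\widetilde{E}\to\Hom(\mcO_X,E)\otimes\mcO_X[1],
\]
which remains BN-semistable; applying the $\Xi$-inequality to $\widetilde{E}$ bounds $\hom(\mcO_X,E)$ from above, while Riemann--Roch and the Calabi--Yau vanishing bound it from below by $\ch_3(E)+\td_{X,2}\ch_1(E)$. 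Combining the two gives an explicit upper bound on $\ch_3(E)$, which is then fed into $Q^\Gamma_{0,0}(E)$ and shown to be non-negative by a case analysis on $\mu_H(E)$. The constant $\Gamma$ is not calibrated on line bundles as you suggest, but rather is the minimal $\delta H^2-\td_{X,2}$ for which every branch of this case analysis closes; this is the origin of the specific values $2/9$ and $1/3$.
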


The above theorem enables us 
to construct an open subset 
in the space of Bridgeland stability conditions 
\cite{bms16, bmt14a, bmsz17}. 
For real numbers $\alpha, \beta, a, b$, 
we define a group homomorphism 
$Z^{a, b}_{\beta, \alpha} \colon K(X) \to \bC$ as 
\[
Z^{a, b}_{\beta, \alpha}:=
-\ch_3^{\beta}+bH\ch_2^\beta+aH^2\ch_1^\beta
+i\left(
	H\ch_2^\beta-\frac{1}{2}\alpha^2H^3\ch_0^\beta 
	\right). 
\]
We denote by $\mcA^{\beta, \alpha}$ 
the double-tilted heart 
defined in \cite{bmt14a}. 

\begin{thm}[Theorem \ref{thm:familystab}] \label{thm:main3intro}
We have a continuous family 
$\left(Z^{a, b}_{\beta, \alpha}, \mcA^{\beta, \alpha} \right)$ 
of stability conditions 
parametrized by real numbers 
$\alpha, \beta, a, b$ satisfying 
\[
\alpha> 0, \quad 
\alpha^2+\left(\beta-\lfloor\beta \rfloor-\frac{1}{2} \right)^2 > \frac{1}{4}, 
\quad a > \frac{1}{6}\alpha^2+\frac{1}{2}|b|\alpha+\gamma, 
\]
where we put $\gamma:=2/9$ (resp. $1/3$) 
when $X$ is a triple (resp. double) cover CY3. 
Acting by the group $\widetilde{\GL^+}(2; \bR)$, 
it forms an open subset in the space of stability conditions. 
\end{thm}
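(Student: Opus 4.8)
The plan is to verify, for all $(\alpha,\beta,a,b)$ in the stated region, that $(Z^{a,b}_{\beta,\alpha},\mcA^{\beta,\alpha})$ satisfies Bridgeland's axioms, following the double-tilting scheme of \cite{bmt14a,bms16,bmsz17} and using Theorem~\ref{thm:main2intro} as the essential new ingredient. Recall that $\mcA^{\beta,\alpha}$ is the tilt of $\Coh^\beta(X)$ at the torsion pair whose torsion (resp.\ torsion-free) part consists of the $\nu_{\beta,\alpha}$-semistable objects of positive (resp.\ non-positive) slope, so that every $E\in\mcA^{\beta,\alpha}$ sits in a triangle $\mcH^{-1}(E)[1]\to E\to\mcH^{0}(E)$ with $\mcH^{-1}(E),\mcH^{0}(E)\in\Coh^\beta(X)$ of the corresponding $\nu_{\beta,\alpha}$-type. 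After the standard reparametrization between tilt-stability and the central charge --- under which the inequality $\alpha^{2}+(\beta-\lfloor\beta\rfloor-\tfrac12)^{2}>\tfrac14$ turns exactly into the hypothesis of Theorem~\ref{thm:main2intro} --- it suffices to establish: (i) the positivity $\Im Z^{a,b}_{\beta,\alpha}(E)\ge 0$ for every nonzero $E\in\mcA^{\beta,\alpha}$, with $\Re Z^{a,b}_{\beta,\alpha}(E)<0$ whenever $\Im Z^{a,b}_{\beta,\alpha}(E)=0$; and (ii) the support property with respect to a suitable quadratic form. The Harder--Narasimhan property and local finiteness then follow from Noetherianity of $\mcA^{\beta,\alpha}$ together with (ii), exactly as in \cite{bmt14a,bms16}, and continuity of the family as well as openness of the $\widetilde{\GL^+}(2;\bR)$-orbit follow by standard arguments recalled below.

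For (i), the imaginary part $\Im Z^{a,b}_{\beta,\alpha}(E)=H\ch^\beta_2(E)-\tfrac12\alpha^{2}H^{3}\ch^\beta_0(E)$ is, by construction of the tilt, the sum of $\Im Z^{a,b}_{\beta,\alpha}(\mcH^{0}(E))\ge 0$ and $-\Im Z^{a,b}_{\beta,\alpha}(\mcH^{-1}(E))\ge 0$, hence non-negative, and it vanishes precisely when $\mcH^{0}(E)$ is a zero-dimensional sheaf and $\mcH^{-1}(E)$ is a $\nu_{\beta,\alpha}$-semistable object of slope $0$. For a zero-dimensional sheaf one has $\Re Z^{a,b}_{\beta,\alpha}=-\ch_3<0$ directly, so the entire content of (i) is the strict inequality $\Re Z^{a,b}_{\beta,\alpha}(F)>0$ --- equivalently $\ch^\beta_3(F)<b\,H\ch^\beta_2(F)+a\,H^{2}\ch^\beta_1(F)$ --- for a nonzero $\nu_{\beta,\alpha}$-semistable object $F\in\Coh^\beta(X)$ with $\nu_{\beta,\alpha}(F)=0$. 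Here Theorem~\ref{thm:main2intro} enters: the reparametrized hypothesis on $\alpha$ places $F$ in the range where $Q^\Gamma_{\alpha,\beta}(F)\ge 0$. The idea is to solve this quadratic inequality for $\ch^\beta_3(F)$ (assuming $H^{2}\ch^\beta_1(F)>0$; the degenerate case $H^{2}\ch^\beta_1(F)=0$ is a short separate analysis of torsion objects), substitute the wall relation $\nu_{\beta,\alpha}(F)=0$ so as to eliminate $H\ch^\beta_2(F)$ in favour of $\alpha$ and $H^{3}\ch^\beta_0(F)$, and bound the remaining terms using the classical Bogomolov--Gieseker inequality $\overline{\Delta}_H(F)\ge 0$ for tilt-semistable objects; the resulting upper bound for $\ch^\beta_3(F)$ is strictly dominated by $b\,H\ch^\beta_2(F)+a\,H^{2}\ch^\beta_1(F)$ as soon as $a>\tfrac16\alpha^{2}+\tfrac12|b|\alpha+\gamma$, the extra summand $\gamma=\Gamma.H/H^{3}$ being precisely what the $\Gamma$-correction terms in $Q^\Gamma$ contribute, and the strictness of the bound on $a$ ensuring $\Re Z^{a,b}_{\beta,\alpha}(F)>0$ (and in particular $Z^{a,b}_{\beta,\alpha}(G)\ne 0$ for every nonzero $G\in\mcA^{\beta,\alpha}$). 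I expect this positivity estimate to be the genuine obstacle: the geometric constant $\gamma$ --- equal to $2/9$ for triple covers and $1/3$ for double covers, via the choice $\Gamma=2/9\,H^{2}$ resp.\ $1/3\,H^{2}$ --- must be propagated sharply through $Q^\Gamma_{\alpha,\beta}(F)\ge 0$, and the precise shape of the admissible region for $a$ records exactly how much room is left once the classical Bogomolov--Gieseker inequality and $\nu_{\beta,\alpha}(F)=0$ have been used.

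For (ii) I would follow \cite{bms16}: exhibit a quadratic form $\widetilde Q$ on $K(X)\otimes\bR$, built as a non-negative linear combination of $Q^\Gamma_{\alpha,\beta}$ with $(\Re Z^{a,b}_{\beta,\alpha})^{2}$ and $(\Im Z^{a,b}_{\beta,\alpha})^{2}$, such that $\widetilde Q$ is negative definite on $\ker Z^{a,b}_{\beta,\alpha}$ (a finite computation using $H^{3}>0$ and a Hodge-index-type inequality) and such that $\widetilde Q(E)\ge 0$ for every $Z^{a,b}_{\beta,\alpha}$-semistable $E\in\mcA^{\beta,\alpha}$; the latter reduces, by the usual wall-and-chamber argument together with the weak see-saw property, to the inequality of Theorem~\ref{thm:main2intro} for $\nu_{\beta,\alpha}$-semistable objects of $\Coh^\beta(X)$, plus the classical Bogomolov--Gieseker inequality. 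This yields the support property, hence that $(Z^{a,b}_{\beta,\alpha},\mcA^{\beta,\alpha})$ is a stability condition for every parameter in the region. Continuity of $(\alpha,\beta,a,b)\mapsto(Z^{a,b}_{\beta,\alpha},\mcA^{\beta,\alpha})$ is then immediate, since the central charges vary continuously in $(\alpha,\beta,a,b)$ and the hearts $\mcA^{\beta,\alpha}$ are locally constant. Finally, since $\Stab(X)\to\Hom(K(X),\bC)$ is a local homeomorphism onto an open subset, it remains to check that the central charges $Z^{a,b}_{\beta,\alpha}$, together with their $\widetilde{\GL^+}(2;\bR)$-translates, sweep out an open subset of $\Hom(K(X),\bC)$; this is a direct verification (a dimension count: four parameters $(\alpha,\beta,a,b)$ and the four-dimensional group, against the rank-four lattice $K(X)$), and it shows that the $\widetilde{\GL^+}(2;\bR)$-orbit of the family is open in $\Stab(X)$.
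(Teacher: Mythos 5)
Your proposal follows essentially the same route as the paper: positivity of $Z^{a,b}_{\beta,\alpha}$ on the double-tilted heart is reduced to the $\nu'_{\beta,\alpha}$-semistable objects of slope zero and settled by combining Theorem \ref{thm:main2intro} with the classical Bogomolov--Gieseker inequality (this is Proposition \ref{prop:axiom12}), while the support property comes from a quadratic form interpolating $\overline{\Delta}_H$ and the $\Gamma$-modified discriminant, negative definite on $\ker Z^{a,b}_{\beta,\alpha}$ (Proposition \ref{prop:support}). One small correction: the hearts $\mcA^{\beta,\alpha}$ are \emph{not} locally constant in $(\beta,\alpha)$ --- the HN property is first established for rational $(\beta,\alpha)$ via Noetherianity, and continuity of the family in the real parameters is then deduced from the support property and Bridgeland's deformation theorem, as in \cite[Proposition 8.10]{bms16}, rather than from local constancy of the hearts.
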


\subsection{Strategy of the proof}
In this subsection, we briefly explain 
how to prove Theorem \ref{thm:mainintro}. 
Let us first recall the arguments in \cite{li19b} 
for a quintic threefold $X_5 \subset \bP^4$. 
We consider $(2, 2, 5)$, $(2, 5)$, $(2, 2)$ 
complete intersections 
\[
C_{2, 2, 5} \subset T_{2, 5} \subset X_5, 
\quad C_{2, 2, 5} \subset S_{2, 2}. 
\]
The stronger BG inequality on $X_5$ 
is proved in the following way: 
\begin{enumerate}
\item First we reduce the problem 
to proving the same inequality for stable sheaves 
on the surface $T_{2, 5} \subset X_5$, 
by using the restriction technique. 

\item Again using the restriction, 
the problem is further reduced to 
establishing a stronger Clifford type bounds 
on global sections for stable vector bundles 
on the curve $C_{2, 2, 5} \subset T_{2, 5}$. 

\item Regard the stable vector bundle on $C_{2, 2, 5}$ 
as a torsion sheaf on the surface $S_{2, 2}$ 
via the inclusion $C_{2, 2, 5} \subset S_{2, 2}$. 
Then a wall-crossing argument 
in the space of Bridgeland stability conditions 
on the surface $S_{2, 2}$ 
gives the desired Clifford type bounds. 
The argument in this step 
first appeared in \cite{fey19}.  
\end{enumerate}
In step (3), the crucial fact is 
that the surface $S_{2, 2}$ is del Pezzo, 
on which a stronger BG inequality holds. 

For double/triple cover CY3s, 
the situation is quite similar. 
In fact, we have smooth complete intersection varieties 
\begin{align*}
&C_{2, 2, 6} \subset T_{2, 6} \subset X_6, \quad 
C_{2, 2, 6} \subset S_{2, 2} \quad \mbox{ in } 
\bP(1, 1, 1, 1, 2), \\
&C_{2, 4, 8} \subset T_{2, 8} \subset X_8, \quad 
C_{2, 4, 8} \subset S_{2, 4} \quad \mbox{ in }
\bP(1, 1, 1, 1, 4), 
\end{align*}
where both of the surfaces 
$S_{2, 2} \subset \bP(1, 1, 1, 1, 2)$ and 
$S_{2, 4} \subset \bP(1, 1, 1, 1, 4)$ 
are isomorphic to the quadric surface $\bP^1 \times \bP^1$, 
which is del Pezzo. 
Note that we consider $(2, 4)$ complete intersection 
in $\bP(1, 1, 1, 1, 4)$ instead of $(2, 2)$, 
to avoid the singularity. 

Hence we are able to 
apply the methods in \cite{li19b} 
to our cases. 
At this moment, we do not know the way 
to treat these examples uniformly, 
so the author believes 
it is still worth writing down the complete proofs. 
In fact, it turns out that, in our cases, 
we need the modified term $\Gamma$ 
in Theorem \ref{thm:main2intro}, 
unlike the quintic case. 

\subsection{Open problems}
\begin{enumerate}
\item In Theorem \ref{thm:main2intro} 
we expect we can take $\Gamma=0$. 
For this, we need a further improvement 
of Theorem \ref{thm:mainintro}. 

\item Stability conditions we construct in this paper 
are said to be `near the large volume limit' in Physics. 
For weighted hypersurfaces, 
we expect the existence of 
another kind of stability conditions, 
called {\it Gepner type}. 
Mathematically, it is the stability condition 
invariant under the certain autoequivalence 
of the derived category. 
See \cite{tod14b, tod17} for discussions 
on the construction of Gepner type stability conditions. 
To construct the heart corresponding to 
the Gepner type stability condition, 
the first task is to prove a stronger form 
of the BG inequality for stable sheaves 
with a specific slope equal $-1/2$. 
Unfortunately, Theorem \ref{thm:mainintro} 
is not enough for this purpose. 

\item One might ask whether we can treat 
other Calabi-Yau weighted hypersurfaces inside 
$\bP(a_1, a_2, a_3, a_4, a_5)$ 
with more general weights $(a_i)$. 
Unfortunately, quintic and double/triple cover CY3s 
are the only cases where 
$\bP(a_1, a_2, a_3, a_4, a_5)$ 
contains a smooth Calabi-Yau hypersurface 
and a smooth del Pezzo (or K3) complete intersection surface 
at the same time. 
Indeed, it happens precisely when 
the weighted $\bP^4$ has only isolated singularities 
and its canonical line bundle can be written 
as $L^{\otimes m}$, 
where $L$ is a free line bundle 
and $m \geq 2$. 
These conditions are equivalent to 
the following numerical conditions. 
\begin{itemize}
\item for any $i$ with $a_i>1$ and for any $j \neq i$, 
$a_i$ does not divide $a_j$, 
\item $\sum a_i=m \cdot \lcm(a_i)$. 
\end{itemize}

An easy but lengthy calculation show that 
there are only three solutions. 
If we allow smooth Deligne-Mumford stacks, 
i.e., if we allow 
the weighted $\bP^4$ to have 
non-isolated singularities, 
there are several other solutions. 
\end{enumerate}

\subsection{Plan of the paper}
The paper is organized as follows. 
In Section \ref{section:preliminaries}, 
we recall about the notion of tilt stability 
in the derived category, 
and about the BG type inequality conjecture. 
Sections \ref{section:clifford} and \ref{section:strongBG} 
are devoted to proving 
Theorem \ref{thm:mainintro} 
for a triple cover CY3. 
The key ingredient is the stronger Clifford type bound 
proved in Section \ref{section:clifford}. 
In Section \ref{section:2to1}, we treat 
the case of a double cover CY3. 
In Section \ref{section:BGconj}, 
we prove Theorem \ref{thm:main2intro}. 
Finally, in Section \ref{section:conststab}, 
we prove Theorem \ref{thm:main3intro}.

\begin{ACK}
The author would like to thank Professor Arend Bayer 
for giving him various advices and comments. 
The author would also like to thank 
Chunyi Li, Masaru Nagaoka, Genki Ouchi 
and Professor Yukinobu Toda 
for related discussions. 
The author was supported by 
ERC Consolidator grant WallCrossAG, no.819864. 

Finally, the author would like to thank the referee for careful reading of the previous version of this article and giving him various useful comments. 
\end{ACK}

\begin{NaC}
In this paper, we always work over the complex number field $\bC$. 
We will use the following notations. 
\begin{itemize}
\item For an ample divisor $H$ and a real number $\beta \in \bR$, 
we denote by 
$\ch^\beta=(\ch^\beta_0, \cdots, \ch^\beta_n):=e^{-\beta H}\ch$, 
the $\beta$-twisted Chern character.  

\item $\hom(E, F):=\dim \Hom(E, F)$, and 
$\ext^i(E, F):=\dim\Ext^i(E, F)$ 
for objects $E, F$ in the derived category, and an integer $i$. 
\end{itemize}
\end{NaC}

\section{Preliminaries} \label{section:preliminaries}
\subsection{BG type inequality conjecture}
In this subsection, we recall the notion of tilt stability, 
and the BG type inequality conjecture. 
We mainly follow the notations in the paper \cite{li19b}. 
Let $X$ be a smooth projective variety 
of dimension $n \geq 2$, 
$H$ an ample divisor. 
We take real numbers $\alpha, \beta \in \bR$ 
with $\alpha > \frac{1}{2} \beta^2$. 
We define a slope function $\mu_H$ as follows: 
\[
\mu_H:=\frac{H^{n-1}\ch_1}{H^n\ch_0} \colon 
\Coh(X) \to \bR \cup \{+\infty\}. 
\]
We have the notion of $\mu_H$-stability on $\Coh(X)$, 
and the corresponding torsion pair on $\Coh(X)$ : 
\begin{align*}
&\mcT_\beta:=\left\langle
T \in \Coh(X) : 
T \mbox{ is } \mu_H \mbox{-semistable with } \mu_H(T) >\beta
\right\rangle, \\
&\mcF_\beta:=\left\langle
F \in \Coh(X) : 
F \mbox{ is } \mu_H \mbox{-semistable with } \mu_H(F) \leq \beta
\right\rangle. 
\end{align*}

Here, $\langle S \rangle$ 
denotes the extension closure 
of a set $S \subset \Coh(X)$ 
of objects in the category $\Coh(X)$. 
By the general theory of torsion pairs \cite{hrs96}, 
we obtain the new abelian category 
\[
\Coh^\beta(X):=\left\langle
\mcF_\beta[1], \mcT_\beta
\right\rangle 
\subset D^b(X), 
\]
which is the heart of a bounded t-structure on $D^b(X)$. 
On the heart $\Coh^\beta(X)$, 
we define the following slope function : 
\[
\nu_{\beta, \alpha}:=
\frac{H^{n-2}\ch_2-\alpha H^n\ch_0}{H^{n-1}\ch_1-\beta H^n\ch_0} 
\colon \Coh^\beta(X) \to \bR \cup \{+\infty\}. 
\]
Then as similar to the $\mu_H$-stability on $\Coh(X)$, 
we can define the notion of $\nu_{\beta, \alpha}$-stability on $\Coh^\beta(X)$. 
We also call $\nu_{\alpha, \beta}$-stability as {\it tilt-stability}. 

\begin{defin} \label{def:bn}
Let $E \in \Coh^0(X)$ be an object.  
\begin{enumerate}
\item We define the {\it Brill-Noether (BN)} slope of $E$ as 
\[
\nu_{BN}(E):=
\frac{H^{n-2}\ch_2(E)}{H^{n-1}\ch_1(E)} 
\in \bR \cup \{+\infty\}. 
\]

\item We say the object $E$ is 
{\it Brill-Noether (BN) (semi)stable} 
if it is $\nu_{0, \alpha}$-(semi)stable 
for every sufficiently small real number 
$0 < \alpha \ll 1$. 
\end{enumerate}
\end{defin}

We refer \cite[Section 2]{li19b} for the basic properties 
of tilt stability and BN stability. 
Let us define the discriminant 
of an object $E \in D^b(X)$ as 
\[
\overline{\Delta}_H(E):=
(H^{n-1}\ch_1(E))^2-2H^n\ch_0(E)H^{n-2}\ch_2(E). 
\]
The following is the main question we investigate in this paper. 

\begin{ques}[\cite{bms16, bmt14a, bmsz17}] \label{conj:BG}
Assume that $n=\dim X=3$. 
Find a $1$-cycle $\Gamma \in A_1(X)_{\bR}$ 
satisfying $\Gamma.H \geq 0$, and the following property: 
Let $E$ be a $\nu_{\beta, \alpha}$-semistable object. 
Then the inequality 
\[
Q^\Gamma_{\alpha, \beta}(E) \geq 0 
\]
holds. Here, the quadratic form $Q^\Gamma_{\alpha, \beta}$ is defined as follows: 
\begin{align*}
Q^\Gamma_{\alpha, \beta}(E)&:=
(2\alpha-\beta^2)\left(
\overline{\Delta}_{H}(E)+3\frac{\Gamma.H}{H^3}
	\left(H^3\ch^\beta_0(E)
	\right)^2
	\right) \\
&\quad +2\left(H\ch^\beta_2(E) \right)\left(
	2H\ch^\beta_2(E)-3\Gamma.H \ch^\beta_0(E)
	\right) \\
&\quad -6\left(H^2\ch^\beta_1(E) \right) \left(
	\ch^\beta_3(E)-\Gamma\ch_1^\beta(E) 
	\right). 
\end{align*}
\end{ques}

The conjectural inequality above 
is called {\it the Bogomolov-Gieseker(BG) type inequality conjecture}, 
proposed in \cite{bms16, bmt14a} with $\Gamma=0$. 
It is known that the BG type inequality conjecture with $\Gamma=0$ 
fails for some classes of threefolds, such as 
the blow-up of $\bP^3$ at a point 
(cf. \cite{kos17, ms19, sch17}). 
The question with the modified term $\Gamma$ 
appeared in \cite{bmsz17} 
and proved affirmatively for all Fano threefolds. 

The following reduction of Question \ref{conj:BG} 
plays an important role in this paper. 
\begin{thm}[cf. {\cite[Theorem 3.2]{li19b}}] \label{thm:reduction}
Assume that $n=\dim X=3$. 
Let $\Gamma$ be a $1$-cycle with $\Gamma.H \geq 0$. 
Suppose that for every BN stable object with 
$\nu_{BN}(E) \in [0, 1/2]$, the inequality 
$Q^\Gamma_{0, 0}(E) \geq 0$ holds. 

Then the inequality in Question \ref{conj:BG} holds 
for any choice of real numbers 
$\alpha, \beta \in \bR$ with 
$\alpha > 
\frac{1}{2}\beta^2
+\frac{1}{2}(\beta-\lfloor \beta \rfloor)
(\lfloor \beta \rfloor+1-\beta)$. 
\end{thm}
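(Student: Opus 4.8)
The plan is to reduce the general case to the case $\beta=0$ and $\alpha\to 0$ by using the known invariance properties of the quadratic form $Q^\Gamma_{\alpha,\beta}$ under the two symmetries of tilt stability, and then to invoke the hypothesis. First I would record the following structural facts, all of which are standard in the theory of tilt stability (and used in \cite{li19b, bms16, bmt14a, bmsz17}): (i) for fixed $\beta$, the set of $(\alpha,\beta)$ for which a given class $v$ is $\nu_{\beta,\alpha}$-semistable but not $\nu_{\beta,\alpha'}$-semistable for nearby $\alpha'$ is a locally finite union of \emph{numerical walls}, which are semicircles with center on the $\beta$-axis in the upper half plane $\{(\beta,\alpha): \alpha>\tfrac12\beta^2\}$; (ii) $Q^\Gamma_{\alpha,\beta}$ is, up to a positive scalar depending only on $(\alpha,\beta)$ and the class, \emph{invariant} under moving $(\alpha,\beta)$ along such a wall --- more precisely the sign of $Q^\Gamma_{\alpha,\beta}(E)$ is constant along any numerical wall for the class of $E$; (iii) the functions $(\alpha,\beta)\mapsto \ch^\beta_i$ transform by the twist $e^{-\beta H}$, so $Q^\Gamma_{\alpha,\beta}$ is equivariant for the shift $\beta\mapsto\beta+1$ together with the corresponding relabelling, i.e. it suffices to treat $\beta\in[\lfloor\beta\rfloor,\lfloor\beta\rfloor+1)$ and after the shift $\beta\in[0,1)$.

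Next I would carry out the reduction. Fix $\alpha,\beta$ with $\alpha>\tfrac12\beta^2+\tfrac12(\beta-\lfloor\beta\rfloor)(\lfloor\beta\rfloor+1-\beta)$, let $E$ be $\nu_{\beta,\alpha}$-semistable, and let $v=\ch(E)$. Using fact (iii) I reduce to $\beta\in[0,1)$, so that the hypothesis on $(\alpha,\beta)$ becomes $\alpha>\tfrac12\beta^2+\tfrac12\beta(1-\beta)=\tfrac12\beta$, i.e. the point $(\beta,\alpha)$ lies \emph{strictly above} the semicircle through $(0,0)$ and $(1,0)$ (the circle $\beta^2+\alpha^2-\beta<0$ rearranges to $\alpha^2<\beta-\beta^2$, and $\alpha>\tfrac12\beta$ together with $\alpha>\tfrac12\beta^2$ precisely excludes the closed disc bounded by that semicircle... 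I should be careful: the condition is that $(\beta,\alpha)$ is outside the \emph{open} disc with that boundary semicircle). Now I run the wall-crossing / deformation argument: I want to deform $(\beta,\alpha)$, keeping $E$ semistable and the sign of $Q^\Gamma$ unchanged, until I reach a point where I can quote the hypothesis. If $E$ remains $\nu_{\beta',\alpha'}$-semistable as I let $\alpha'\to 0^+$ for suitable $\beta'$, then $E$ is forced to be either BN semistable with $\nu_{BN}\in[0,1/2]$ (after a further shift by fact (iii), using that $[0,1)$ maps $\mu^\beta$-values into a fundamental domain and BN slope into $[0,1/2]$ or $[-1/2,0]$, the latter handled by duality $E\mapsto E^\vee$ which also preserves the sign of $Q^\Gamma$), or it is a torsion object / has a destabilizing subobject, in which case I induct on the discriminant $\overline{\Delta}_H$ using the fact that Harder--Narasimhan and Jordan--H\"older factors have strictly smaller $\overline{\Delta}_H$ and that $Q^\Gamma$ is, for fixed $(\alpha,\beta)$, subadditive in short exact sequences of objects with proportional reduced slopes. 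The only subtlety is to make sure the deformation path stays in the allowed region $\alpha>\tfrac12\beta$; this is exactly why the hypothesis on $(\alpha,\beta)$ is stated the way it is --- the region is a union of wall-semicircles each of which either shrinks to a point on the $\beta$-axis at an integer (giving a BN object after shift) or can be crossed while shrinking $\overline{\Delta}_H$.

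Concretely, the steps in order: \textbf{Step 1:} reduce to $\beta\in[0,1)$ via the twist equivariance of $Q^\Gamma$ and of tilt stability under $\beta\mapsto\beta+1$; note the hypothesis becomes $\alpha>\tfrac12\beta$. \textbf{Step 2:} reduce to $E$ being $\nu_{\beta,\alpha}$-\emph{stable} with $v=\ch(E)$ primitive by splitting into Jordan--H\"older factors (whose reduced tilt slopes agree, so the quadratic form is additive on them up to the same positive scalar). \textbf{Step 3:} if $\overline{\Delta}_H(E)=0$ then the BG inequality $\overline{\Delta}_H\ge0$ forces $E$ to be a twist of a (shifted) slope-semistable sheaf and $Q^\Gamma_{\alpha,\beta}(E)\ge 0$ follows by a direct computation using $\Gamma.H\ge 0$; otherwise $\overline{\Delta}_H(E)>0$ and there are only finitely many walls for $v$ in the region $\{\alpha>\tfrac12\beta^2\}$ below a fixed semicircle. \textbf{Step 4:} move $(\beta,\alpha)$ within the chamber of $E$ (sign of $Q^\Gamma$ unchanged, $E$ stays stable) to the boundary of the chamber; the boundary is either (a) the curve $\alpha=\tfrac12\beta^2$ or $\alpha\to 0$, where $E$ becomes a BN (semi)stable object and, after a shift to bring $\nu_{BN}$ into $[0,1/2]$ (using the duality $(-)^\vee$ if it lands in $[-1/2,0]$, which negates $\beta$ and preserves the sign of $Q^\Gamma$), the hypothesis $Q^\Gamma_{0,0}(E)\ge0$ applies; or (b) an actual wall, across which $E$ is destabilized and each destabilizing factor has strictly smaller $\overline{\Delta}_H$, so we conclude by induction on $\overline{\Delta}_H$ using additivity of $Q^\Gamma$ along the wall. \textbf{Step 5:} check that the region $\alpha>\tfrac12\beta$ (for $\beta\in[0,1)$) is exactly the locus from which every such deformation path stays admissible --- i.e. no wall-semicircle with center at a non-integer point on the $\beta$-axis protrudes above this region near its apex --- so the induction terminates.

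The main obstacle is \textbf{Step 4(b)} combined with \textbf{Step 5}: controlling precisely which numerical walls lie in the admissible region and verifying that crossing any of them strictly decreases $\overline{\Delta}_H$ on the factors while keeping $(\beta,\alpha)$ in the region $\alpha>\tfrac12\beta^2+\tfrac12(\beta-\lfloor\beta\rfloor)(\lfloor\beta\rfloor+1-\beta)$. This is the place where the specific shape of the bound on $(\alpha,\beta)$ is forced, and it is the geometric heart of the argument; the rest (twist equivariance, additivity of $Q^\Gamma$ on JH factors, the $\overline{\Delta}_H=0$ base case, and the duality trick to get $\nu_{BN}\in[0,1/2]$ rather than $[-1/2,1/2]$) is bookkeeping that parallels \cite[Theorem 3.2]{li19b} closely. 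I expect the proof to be essentially a careful adaptation of loc.\ cit., with the modified cycle $\Gamma$ entering only in the $\overline{\Delta}_H=0$ base case and in the additivity/subadditivity statement, both of which go through because $\Gamma.H\ge 0$ enters $Q^\Gamma$ with a favorable sign.
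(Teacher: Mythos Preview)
Your overall strategy---reduce to $\beta\in[0,1)$ by twist, deform along walls, induct on $\overline{\Delta}_H$, and land at BN-stable objects with $\nu_{BN}\in[0,1/2]$---is the same route the paper takes (which simply defers to \cite[Theorem~3.2]{li19b}). The paper's proof consists of isolating the \emph{two} properties of the modified form $Q^\Gamma_{\alpha,\beta}$ that make Li's argument go through verbatim: (a) the sign of $Q^\Gamma_{\alpha,\beta}(E)$ is constant as $(\beta,\alpha)$ moves along the line through $p_H(E)$ and $(\beta,\alpha)$ (this is your fact~(ii)); and (b) $Q^\Gamma_{\alpha,\beta}$ is negative semi-definite on the kernel of $\overline{Z}_{\alpha,\beta}=H^2\ch_1^\beta+i(H\ch_2-\alpha H^3\ch_0)$.

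You have (a), but you are missing (b), and your substitute for it is wrong. In Step~4(b) and in your closing paragraph you repeatedly invoke ``additivity'' or ``subadditivity'' of $Q^\Gamma$ on Jordan--H\"older factors with the same tilt slope. A quadratic form is never additive; what actually makes the induction work is the signature statement (b): if $Q^\Gamma_{\alpha,\beta}$ is negative semi-definite on $\ker\overline{Z}_{\alpha,\beta}$, then on the rank-two sublattice spanned by two classes $v_1,v_2$ with the same $\overline{Z}$-phase the form has signature at most $(1,1)$, so $Q^\Gamma(v_1)\ge0$ and $Q^\Gamma(v_2)\ge0$ together with $v_1,v_2$ lying in the same half-plane force $Q^\Gamma(v_1+v_2)\ge0$ (this is the standard ``support property'' mechanism from \cite{bms16}). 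Without checking (b) for the $\Gamma$-modified form---and it does require $\Gamma.H\ge0$---your induction step has no engine. This is the genuine gap.

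One minor point: in the paper's parametrization (with $\alpha>\tfrac12\beta^2$ and the slope $\nu_{\beta,\alpha}$ as defined here) the numerical walls are \emph{line segments} through $p_H(E)$, not semicircles; your condition $\alpha>\tfrac12\beta$ for $\beta\in[0,1)$ is correct, but it describes the region above the line joining $p_H(\mcO_X)=(0,0)$ and $p_H(\mcO_X(H))=(1,\tfrac12)$, not a semicircle. This does not affect the logic, but the semicircle language will confuse a reader working in Li's coordinates.
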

\begin{proof}
Exactly the same arguments 
as in \cite[Theorem 3.2]{li19b} work 
since the following statements are true. 
\begin{itemize}
\item Let $(\beta', \alpha') \in \bR^2$ be a point 
on the line through $p_H(E)$ and $(\beta, \alpha)$ 
with $\alpha'>\frac{1}{2}\beta'^2$. 
Then $Q^{\Gamma}_{\alpha, \beta}(E) <0$ implies
 $Q^{\Gamma}_{\alpha', \beta'}(E) <0$. 
Here we define a point $p_H(E) \in \bR^2$ as 
\[
p_H(E):=\left(
	\frac{H^2\ch_1(E)}{H^3\ch_0(E)}, 
	\frac{H\ch_2(E)}{H^3\ch_0(E)}
	\right). 
\]
 
 \item The quadratic form 
 $Q^{\Gamma}_{\alpha, \beta}$ 
 is semi-negative definite on the kernel of 
 $\overline{Z}_{\alpha, \beta}:=H^2\ch_1^\beta+i(H\ch_2-\alpha H^3\ch_0)$. 
\end{itemize}
\end{proof}

\subsection{Star-shaped functions and the BG type inequalities}
In this subsection, we explain 
the wall-crossing technique 
used to obtain the (stronger) BG inequality 
for tilt-stable objects. 
This idea will also appear in the proof of 
the BG type inequality conjecture involving $\ch_3$. 
As in the previous subsection, 
we denote by $X$ a smooth projective variety of dimension $n$, 
and $H$ an ample divisor on $X$. 
We use the following notion. 
\begin{defin}
A function $f \colon \bR \to \bR$ 
is called {\it star-shaped} 
if the following condition hold: 
For all real numbers 
$\alpha, \beta \in \bR$ with $\alpha >0$, 
the line segment connecting the points 
$(\beta, f(\beta))$ and $(0, \alpha)$ is above 
the graph of $f$. 
\end{defin}

Recall that for an object $E \in D^b(X)$ 
with $\ch_0(E) \neq 0$, 
we define 
\[
p_H(E):=\left(
	\frac{H^{n-1}\ch_1(E)}{H^n\ch_0(E)}, 
	\frac{H^{n-2}\ch_2(E)}{H^n\ch_0(E)}
	\right). 
\]
We have the following result: 
\begin{prop}[cf. \cite{bms16, li19a}] 
\label{prop:bg-tilt*}
Let $f \colon \bR \to \bR$ be a star-shaped function. 
Assume that for every $\mu_H$-semistable torsion free sheaf $E$, 
the inequality 
\[
\frac{H^{n-2}\ch_2(E)}{H^n\ch_0(E)} \leq 
f\left( 
\frac{H^{n-1}\ch_1(E)}{H^n\ch_0(E)}
\right) 
\] 
holds. 
Then for every $\alpha>0$ and 
a $\nu_{0, \alpha}$-semistable object $E$ 
with $\ch_0(E) \neq 0$, 
its Chern character satisfies the same inequality. 
\end{prop}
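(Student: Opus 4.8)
\textbf{Proof proposal for Proposition \ref{prop:bg-tilt*}.}

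The plan is to run a wall-crossing argument in the $(\beta,\alpha)$ half-plane $\{\alpha > \tfrac12\beta^2\}$, exactly in the spirit of \cite{bms16, li19a}. Suppose for contradiction that there is a $\nu_{0,\alpha_0}$-semistable object $E$ with $\ch_0(E)\neq 0$ violating the displayed inequality, i.e. the point $p_H(E)$ lies strictly above the graph of $f$. First I would observe that, without loss of generality, $\ch_0(E)>0$: if $\ch_0(E)<0$ we replace $E$ by $E[1]$ (or pass to a shift/dual), and the classical Bogomolov--Gieseker inequality $\overline{\Delta}_H(E)\geq 0$ for tilt-semistable objects \cite{bmt14a} already forces $p_H(E)$ to lie on or below the parabola $y=\tfrac12 x^2$, hence below any star-shaped $f$ whenever $f\geq \tfrac12 x^2 \cdot(\text{something})$—more precisely, I will only need the statement for $\ch_0>0$ and can normalize $\ch_0(E)=1$ after rescaling the Chern character, so that $p_H(E)=(H^{n-1}\ch_1(E),\,H^{n-2}\ch_2(E))$.

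Next I would locate the relevant wall. Consider the vertical ray in the $(\beta,\alpha)$-plane above $\beta_0:=H^{n-1}\ch_1(E)/H^n\ch_0(E)$, together with the point $(\beta_0, 0)$ sitting on the $\beta$-axis; the numerical wall $W(E)$ for $E$—the locus where $\nu_{\beta,\alpha}$ of a potential destabilizing subobject or quotient equals that of $E$—is, by the standard structure theory of walls for tilt stability, either empty or a semicircle with centre on the $\beta$-axis. Since $E$ is $\nu_{0,\alpha_0}$-semistable, for $\alpha$ slightly below $\alpha_0$ (still with $\alpha>\tfrac12\beta_0^2$, using that $p_H(E)$ lies strictly above $y=\tfrac12 x^2$) we can move along a path in the wall-and-chamber structure and find the first wall $\mathcal{W}$ that $E$ crosses; on $\mathcal{W}$, $E$ has a Jordan--Hölder filtration into $\nu$-semistable factors $E_i$ of the same reduced slope, each with strictly smaller discriminant $\overline{\Delta}_H(E_i)<\overline{\Delta}_H(E)$, and each $E_i\in\mathcal{D}$ by construction of $\mathcal{D}$ (either a $\mu_H$-semistable sheaf with positive rank, case (1), or a two-term complex with $\mu_H$-semistable $\mathcal{H}^{-1}$ and $\mathcal{H}^0$ supported in codimension $\geq 2$, case (2)). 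The key geometric input is the star-shaped hypothesis: the points $p_H(E_i)$ all lie on a line through $(\beta_0, \ast)$ that, because it passes through a point of the form $(0,\alpha')$ with $\alpha'>0$ when extended (the centre of the semicircular wall lies on the $\beta$-axis, so the line in $p_H$-coordinates through the $p_H(E_i)$ and $p_H(E)$ has the form dictated by the wall), lies above the graph of $f$; hence if every $E_i$ satisfies $p_H(E_i)$ on or below the graph of $f$, then $p_H(E)$—being a weighted average (convex combination with weights $H^n\ch_0(E_i)/H^n\ch_0(E)$) of the $p_H(E_i)$—also lies on or below that line, and in particular cannot lie strictly above $f$ in the way a violation requires. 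This is the contradiction.

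The induction is on $\overline{\Delta}_H(E)\geq 0$ (a non-negative integer-valued, or at least discrete, quantity on the relevant lattice): the base case $\overline{\Delta}_H(E)=0$ is immediate since then $p_H(E)$ lies exactly on $y=\tfrac12 x^2\le$ wherever $f$ forces it, and more directly, an object with $\overline\Delta_H=0$ that is $\nu_{0,\alpha}$-semistable for small $\alpha$ is (up to the usual exceptions) itself in $\mathcal D$ or a shift thereof, so the hypothesis applies to it directly. For the inductive step, either $E$ is already $\nu_{0,\alpha}$-semistable for all small $\alpha>0$—i.e. $E\in\mathcal D$ (case (1) or (2), after checking $E$ is not supported in too small dimension, which follows from $\ch_0(E)\neq 0$)—and we are done by hypothesis; or $E$ hits a wall, and on that wall the JH factors $E_i$ have strictly smaller discriminant, so by induction each satisfies the inequality, and the convexity argument above propagates it back to $E$. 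I expect the main obstacle to be the bookkeeping around case (2) of $\mathcal{D}$ and the behaviour at the boundary $\alpha\to 0^+$: one must be careful that the destabilizing factors on the last wall are genuinely of the two types allowed in $\mathcal D$ (in particular that their $\mathcal H^{-1}$ is $\mu_H$-semistable and $\mathcal H^0$ has support of dimension $\le n-2$), and that the "first wall" is well-defined—i.e. that walls do not accumulate—which is where one invokes the local finiteness of the wall-and-chamber decomposition for tilt stability from \cite{bms16, bmt14a}. Modulo those standard facts, the argument is the verbatim analogue of \cite[Prop.~3.2]{li19a} (or the corresponding statement in \cite{bms16}), and I would phrase the proof as such, spelling out only the convex-combination step that uses the star-shapedness of $f$.
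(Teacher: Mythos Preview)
Your overall architecture (wall-crossing on the ray $\beta=0$, induction/minimality on $\overline{\Delta}_H$, star-shapedness to propagate violations) matches the paper's, but the proof as written has a genuine gap in the direction of the limiting argument.

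You write that if $E$ is $\nu_{0,\alpha}$-semistable for all \emph{small} $\alpha>0$ then $E\in\mathcal{D}$. This is false: semistability for $\alpha\to 0^+$ is precisely BN-(semi)stability, not membership in $\mathcal{D}$. The set $\mathcal{D}$ consists of $\mu_H$-semistable sheaves and two-term complexes with $\mu_H$-semistable $\mathcal H^{-1}$; these are exactly the objects that are $\nu_{0,\alpha}$-semistable for $\alpha\gg 0$ (this is \cite[Lemma 2.7]{bms16}, the ``large volume limit''). So the paper runs the wall-crossing in the opposite direction from yours: take a violating tilt-semistable $E$ of minimal discriminant; if it ever becomes strictly semistable at some $\alpha_0$, a Jordan--H\"older factor $F$ has $p_H(F)$ on the segment joining $p_H(E)$ to $(0,\alpha_0)$, hence $F$ also violates by star-shapedness, and $\overline{\Delta}_H(F)<\overline{\Delta}_H(E)$ by \cite[Corollary 3.10]{bms16}, contradicting minimality. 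Otherwise $E$ is $\nu_{0,\alpha}$-semistable for all $\alpha$, in particular $\alpha\gg 0$, so $E\in\mathcal{D}$ and the hypothesis applies directly.

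Two further points. First, your assertion that the JH factors $E_i$ on a wall already lie in $\mathcal{D}$ is unjustified and generally false; they are only $\nu_{0,\alpha_0}$-semistable. The induction hypothesis should be applied to \emph{tilt}-semistable violators of smaller discriminant, not to elements of $\mathcal{D}$. Second, the ``convex combination'' step is not valid as stated: the weights $\ch_0(E_i)/\ch_0(E)$ sum to $1$ but can be negative, so $p_H(E)$ need not lie in the convex hull of the $p_H(E_i)$. The paper avoids this by arguing that \emph{one} factor $F$ has $p_H(F)$ on the segment from $(0,\alpha_0)$ to $p_H(E)$; star-shapedness (the region strictly above the graph of $f$ is star-shaped with respect to every $(0,\alpha)$ with $\alpha>0$) then forces $p_H(F)$ to lie above the graph as well.
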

\begin{proof}
Assume for a contradiction that 
there exists a tilt-semistable object $E$ 
violating the required inequality. 
By \cite[Theorem 3.5]{bms16}, the object $E$ 
satisfies the usual BG inequality 
$\overline{\Delta}_H(E) \geq 0$. 
Hence we may assume that it has 
the minimum discriminant $\overline{\Delta}_H(E)$ 
among all tilt-semistable objects violating the inequality. 

Assume that $E$ becomes strictly $\nu_{0, \alpha_0}$-semistable 
for some $\alpha_0>0$. 
Then there exists a Jordan-H{\"o}lder factor $F$ of $E$ 
such that $p_H(F)$ is on the line segment connecting 
$p_H(E)$ and $(0, \alpha_0)$. 
Since the function $f$ is star-shaped, 
the object $F$ also violates the required inequality. 
Moreover, by \cite[Corollary 3.10]{bms16} we have 
$\overline{\Delta}_H(F) < \overline{\Delta}_H(E)$, 
which contradicts the minimality assumption on the discriminant. 

Now we can assume that $E$ is 
$\nu_{0, \alpha}$-semistable 
for all $\alpha \gg 0$. 
Hence by \cite[Lemma 2.7]{bms16}, 
the object $E$ satisfies one of the following conditions: 
\begin{enumerate}
\item $E \in \Coh(X)$ and it is $\mu_H$-semistable with $\ch_0(E)>0$. 

\item $\mcH^{-1}(E)$ is $\mu_H$-semistable, 
and $\dim \Supp \mcH^0(E) \leq n-2$.  
\end{enumerate}
In both cases, we get the contradiction by our assumption that 
$\mu_H$-semistable torsion free sheaves satisfy the desired inequality. 
\end{proof}

\subsection{Triple cover CY3}
Let us consider a general hypersurface 
\[
X:=X_{6} \subset P:=\bP(1, 1, 1, 1, 2) 
\]
of degree $6$ inside the weighted projective space. 
Then $X$ is a smooth projective Calabi-Yau threefold, 
which we call {\it triple cover CY3}. 
We will use 
general $(2, 2, 6)$, $(2, 6)$, $(2, 2)$-complete intersections 
\[
C_{2, 2, 6} \subset T_{2, 6} \subset X_6, 
\quad C_{2, 2, 6} \subset S_{2, 2}
\]
in $P$. 
Since the line bundle $\mcO_{P}(2)$ is free, they are smooth. 
The following are some of 
the numerical invariants of 
$C:=C_{2, 2, 6}$, $T:=T_{2, 6}$, $S:=S_{2, 2}$, and $X$. 

\begin{itemize}
\item $-K_{P}=6H_{P}$, $H_{P}^4=\frac{1}{2}$. 
\item $g(C)=25$, 
\item $-K_{S}=2H_{S}$, ($-K_{S})^2=8$. 
In particular, $S \cong \bP^1 \times \bP^1$. 
\item $C=3(-K_{S})$ as divisors in $S$. 
\item $\td_{S}=(1, H_S, 1)$. 
\item $K_{T}=2H_{T}$, $H_{T}^2=6$, 
$\td_{T}=(1, -H_T, 11)$. 
\item $\td_{X}=(1, 0, \frac{7}{6}H_{X}^2, 0)$, $H_{X}^3=3$. 
\end{itemize}

All the computations are straightforward. 
For example, to compute $\td_{X, 2}$, 
it is enough to compute $\chi(\mcO_X(1))$, 
which can be calculated using the exact sequence 
\[
0 \to \mcO_P(-5) \to \mcO_P(1) \to \mcO_X(1) \to 0. 
\]

\section{Clifford type theorem} \label{section:clifford}
Recall from the last subsection that 
we denote by 
\[
C=C_{2, 2, 6} \subset S=S_{2, 2} \subset P=\bP(1, 1, 1, 1, 2)
\]
the weighted complete intersections. 
We have $S \cong \bP^1 \times \bP^1$ 
and $C \in \left|\mcO_S(6, 6) \right|$. 
In this section, we will prove the following proposition:

\begin{prop} \label{prop:clifford}
Let $F$ be a slope stable vector bundle on $C$ 
of rank $r$, slope $\mu$. 
Put $t:=\mu/12$. 
Assume that $t \in [0, 1/2] \cup [3/2, 2]$. 
The following inequalities hold: 
\begin{enumerate}
\item When $t \in [0, 1/6)$, 
we have 
$h^0(F)/r \leq 
\frac{12t+24}{25}$.

\item When $t \in [1/6, 1/4)$, 
we have 
$h^0(F)/r \leq \max\left\{
\frac{8t+8}{9}, 
\frac{10}{19}t+\frac{145}{152}
\right\}$.

\item When $t \in [1/4, 1/2]$, 
we have 
$h^0(F)/r \leq \max\left\{
4t, \frac{33}{38}t+\frac{69}{76}
\right\}$.

\item When $t \in [3/2, 11/6]$, 
we have 
$h^0(F)/r \leq \max \left\{
4t, 
\frac{231}{32}t-\frac{375}{64} 
\right\}$.

\item When $t \in (11/6, \sqrt{14}/2]$, 
we have 
$h^0(F)/r \leq 
\frac{233t-191}{32}$. 

\item When $t \in [\sqrt{14}/2, 23/12]$, 
we have 
$h^0(F)/r \leq 
\frac{192t-168}{25}$. 

\item When $t \in [23/12, 2]$, 
we have 
$h^0(F)/r \leq 
12t-15$. 
\end{enumerate}
\end{prop}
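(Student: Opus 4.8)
The plan is to follow the wall-crossing strategy of Feyzbakhsh \cite{fey19} and Li \cite{li19b}: realize a slope stable vector bundle $F$ on the curve $C$ as a torsion sheaf $i_*F$ on the del Pezzo surface $S\cong\bP^1\times\bP^1$ via the inclusion $i\colon C\hookrightarrow S$, and then study how $i_*F$ behaves under tilt stability on $S$. Since $C\in|\mcO_S(6,6)|$, the Chern character of $i_*F$ on $S$ is determined by $r$, $\mu$, and $\chi(F)$; by Riemann--Roch on $C$ (using $g(C)=25$) the quantity $h^0(F)$ is, up to the correction $h^1(F)$, linear in $\chi(F)$, so controlling $h^0(F)/r$ amounts to bounding the second Chern character of $i_*F$, i.e. bounding $\chi(F)$ from above. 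The vanishing $h^1(F)=\hom(F,\mcO_C\otimes\omega_C)$ will be handled separately in the relevant slope ranges (Serre duality on $C$ together with stability), which is why the hypothesis $t\in[0,1/2]\cup[3/2,2]$ appears: these are the ranges where either $h^0$ or $h^1$ is controlled directly.

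The core of the argument is then a wall-crossing computation in $\Stab(S)$. First I would note that $i_*\mcO_C^{\oplus r}\twoheadrightarrow i_*F$ or, dually, a subsheaf relation coming from global sections gives a destabilizing sequence in $\Coh^\beta(S)$ for suitable $\beta$; more precisely, $h^0(F)$ nonzero produces a map $\mcO_S^{\oplus h^0(F)}\to i_*F$ whose image controls the invariants. Because $S$ is del Pezzo, we have the \emph{stronger} Bogomolov--Gieseker inequality on $S$ available (every slope stable sheaf on $\bP^1\times\bP^1$ satisfies an improved discriminant bound from Serre duality, as recalled in the introduction), and crucially the space of tilt stability conditions on a surface is completely understood — walls are nested semicircles in the $(\beta,\alpha)$ upper half plane. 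The plan is: start at the large-volume limit where $i_*F$ (being a torsion sheaf) is tilt semistable, then decrease $\alpha$ along a vertical ray and locate the first wall where $i_*F$ becomes strictly semistable or unstable; at that wall a Jordan--Hölder / HN factor $E$ appears whose rank $\ch_0(E)>0$, and applying the strong BG inequality on $S$ to $E$ (and to the quotient) yields a numerical inequality. Optimizing over the possible walls — equivalently, over the possible values of $\ch_0(E)$ and $\beta$ at which the wall is crossed — produces a piecewise-linear upper bound for $\chi(F)$, hence for $h^0(F)/r$. The several cases $(1)$--$(7)$ and the appearance of the constants like $\frac{10}{19}t+\frac{145}{152}$ and the breakpoint $\sqrt{14}/2$ reflect exactly which wall is the binding one in each subinterval of $t$: for small $t$ a single "rank one" wall dominates (giving the clean bounds $\frac{12t+24}{25}$, $12t-15$), while in the middle ranges two candidate walls compete, forcing the $\max$.

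I would organize the write-up by first setting up notation on $S$: record $\ch(i_*F)=(0, r\cdot\tfrac{1}{2}(-K_S), \ldots)$ in terms of $r,\mu,\chi(F)$ using $C=3(-K_S)$ and $\td_S=(1,H,1)$; translate "$h^0(F)/r\le \cdots$" into "$\chi(F)\le$ (linear in $t$)" after disposing of $h^1$. Then establish tilt semistability of $i_*F$ at large volume and the structure of potential destabilizing walls, using that any HN/JH factor $E$ has $0<\ch_0(E)$ bounded by a function of the invariants. Then run the numerical optimization: for each candidate $(\ch_0(E)=k,\beta)$ the strong BG inequality on $S$ applied to $E$ and to the cone gives an inequality linear in the $\ch_i(i_*F)$; solving for $\chi(F)$ and taking the worst case over admissible $k$ gives the stated piecewise bounds. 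The main obstacle, I expect, is the bookkeeping in this last step — correctly enumerating which walls can actually occur (i.e. which $(k,\beta)$ are geometrically realizable given $\overline{\Delta}_H\ge 0$ on the factors and the del Pezzo improvement), and checking that the competing linear functions cross exactly at the advertised breakpoints $1/6, 1/4, 11/6, \sqrt{14}/2, 23/12$. The del Pezzo strong BG inequality on $\bP^1\times\bP^1$ is what makes the bound strictly better than the naive Clifford bound, so getting its precise form and feeding it into the wall-crossing estimate is the technically delicate heart of the proof; everything else is Riemann--Roch and the standard geometry of walls on a surface.
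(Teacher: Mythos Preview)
Your overall strategy---push forward to $S\cong\bP^1\times\bP^1$, use the strong BG inequality there, and run a wall-crossing argument---matches the paper. But two key steps are wrong or missing.

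First, the quantity $\chi(F)$ is \emph{not} the thing to bound: it is already determined by $r$ and $\mu$ via Riemann--Roch on $C$, namely $\chi(F)=r(\mu-24)$, and equivalently $\ch_2(\iota_*F)=r(\mu-36)$ is fixed. So ``bounding $\chi(F)$ from above'' is vacuous, and there is no ``disposing of $h^1$'' by Serre duality (for $t\in[0,1/2]\cup[3/2,2]$ the slope is far below $\deg\omega_C=48$, so $h^1$ need not vanish). The paper instead bounds $h^0(F)=\hom(\mcO_S,\iota_*F)$ \emph{directly}: take the HN filtration of $\iota_*F$ with respect to BN stability on $S$, and for each BN-stable factor $G$ prove a bound of the form $\hom(\mcO_S,G)\le \ch_0(G)+\tfrac{1}{2n+1}H\ch_1(G)+\tfrac{1}{(2n+1)^2}\ch_2(G)$ depending on which interval $(-n-1,-n)$ contains $\nu_{BN}(G)$ (this is Lemma \ref{lem:sectionbn}, proved by the universal extension $\mcO_S^{\oplus\hom(\mcO_S,G)}\hookrightarrow G$ and Riemann--Roch after twisting by $\mcO_S(-2n)$). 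Summing these gives $h^0(F)\le\sum_i\Omega(\overrightarrow{P_{i-1}P_i})$, a piecewise-linear functional on the HN polygon; one then maximizes over all polygons compatible with the constraints on $\nu^\pm_{BN}(\iota_*F)$. This Hom-bound machinery is the missing link between ``where the walls are'' and ``how big $h^0$ is''.

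Second, you are missing the geometric input that makes the wall analysis tractable: Lemma \ref{lem:1stwall} shows that any wall for $\iota_*F$ has endpoints $(\beta_1,\alpha_1),(\beta_2,\alpha_2)$ with $\beta_2-\beta_1\le 6$. This comes from the fact that for a destabilizing subobject $F_2\subset\iota_*F$ with torsion-free part $Q$, the quotient $Q/\mcH^{-1}(F_1)$ is supported on $C$, forcing $\mu_H(Q)-\mu_H(\mcH^{-1}(F_1))\le 6$ since $C\in|6H_S|$. Without this width bound you cannot pin down the first possible wall, and without that you cannot bound $\nu^\pm_{BN}(\iota_*F)$ (Lemma \ref{lem:bnslope}), which is what controls the HN polygon. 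The breakpoints $\sqrt{14}/2$, $11/6$, $23/12$ arise precisely from intersecting the line of slope $t-3$ (the BN slope of $\iota_*F$) with the graph of $\Upsilon$ and the vertical jump lines $L_n$, subject to this width constraint---not from enumerating integer ranks $\ch_0(E)=k$ as you suggest.
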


\begin{figure}[htbp]
\begin{tabular}{c}
\begin{minipage}{0.5\hsize}
\begin{center}
\begin{tikzpicture}[xscale=10, yscale=2]
\draw[thick, ->] (0, 0.5)node[below]{$0$} 
-- (0.05+1/2, 0.5) node[above]{$t$}; 
\draw[thick, ->] (0, 0.5) -- (0, 2.5) node[right]{$h^0(F)/r$}; 
\node[left] at (0, 24/25) {$\frac{24}{25}$}; 
\draw[dashed] (1/6, 0.5)node[below]{$\frac{1}{6}$} -- (1/6, 12/25*1/6+24/25); 
\draw[domain=0:1/6] plot(\x, 12/25*\x+24/25); 
\draw[dashed] (1/4, 0.5)node[below]{$\frac{1}{4}$} -- (1/4, 33/38*1/4+69/76); 
\draw[domain=1/6:1/4] plot(\x, {max(8/9*\x+8/9, 10/19*\x+145/152)}); 
\draw[domain=1/4:1/2] plot(\x, {max(4*\x, 33/38*\x+69/76)}); 
\draw[dashed] (1/2, 0.5)node[below]{$\frac{1}{2}$} -- (1/2, 2); 
\draw[dashed] (0, 2)node[left]{$2$} -- (1/2, 2); 
\end{tikzpicture}
\end{center}
\end{minipage}

\begin{minipage}{0.5\hsize}
\begin{center}
\begin{tikzpicture}[xscale=10, yscale=1]
\draw[thick, ->] (3/2, 5.5)node[below]{$\frac{3}{2}$} 
-- (2.05, 5.5) node[right]{$t$}; 
\draw[thick, ->] (3/2, 5.5) -- (3/2, 9.5) node[right]{$h^0(F)/r$}; 
\node[left] at (3/2, 6) {$6$}; 
\draw[dashed] (11/6, 5.5)node[above left]{$\frac{11}{6}$} 
-- (11/6, 231/32*11/6-375/64); 
\draw[domain=3/2:11/6] plot(\x, {max(4*\x, 231/32*\x-375/64)}); 
\draw[domain=11/6:1.87] plot(\x, 233/32*\x-191/32); 
\draw[dashed] (1.87, 5.5)node[below]{$\frac{\sqrt{14}}{2}$} 
-- (1.87, 1.87*192/25-168/25); 
\draw[domain=1.87:23/12] plot(\x, 192/25*\x-168/25); 
\draw[dashed] (23/12, 5.5)node[below]{$\frac{23}{12}$} 
-- (23/12, 12*23/12-15); 
\draw[domain=23/12:2] plot(\x, 12*\x-15); 
\draw[dashed] (2, 5.5)node[below]{$2$} -- (2, 9); 
\draw[dashed] (3/2, 9)node[left]{$9$} -- (2, 9); 
\end{tikzpicture}
\end{center}
\end{minipage}

\end{tabular}
\caption{The strong Clifford type bounds on $C$.} 
\label{fig:Cliff}
\end{figure}
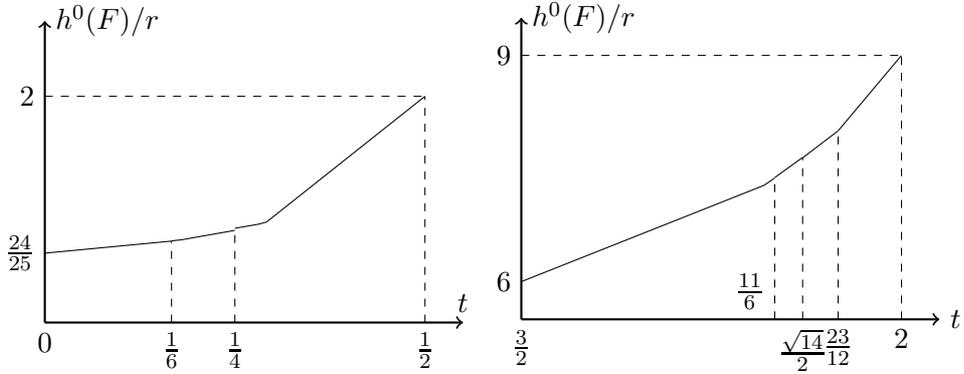

\begin{rmk}
The parameter $t=\mu/12$ 
naturally appears as the BN slope 
of the sheaf $\iota_*F$, 
where $\iota \colon C \hookrightarrow S$ 
is an embedding. 
Indeed, we have 
$\nu_{BN}(\iota_*F)=t-3$, 
as we will see 
in the proof of Lemma \ref{lem:bnslope} below. 

It is also compatible with 
the slope function on $T$ 
in the following sense. 
For a vector bundle $F$ on $T$, 
we have $t(F|_C)=\mu_{H_T}(F)$. 
\end{rmk}

Our strategy of the proof of Proposition \ref{prop:clifford}
is to use Bridgeland stability conditions on 
the surface $S$, with the following three steps. 
\begin{enumerate}
\item Regard $F$ as a torsion sheaf $\iota_{*}F \in \Coh(S)$, 
which is $\nu_{0, \alpha}$-stable for $\alpha \gg 0$. 
\item Estimate the first possible wall 
for $\iota_{*}F$ on the line $\beta=0$ 
in $(\alpha, \beta)$ plane, 
using the stronger form of the BG inequality on $S$. 
\item Bound global sections of BN-stable objects on $S$. 
\end{enumerate}

We define a function $\Upsilon$ on $\bR$ as 
\[
\Upsilon(x):=
\begin{cases}
\frac{1}{2}x^2-\frac{1}{2}(1-\{x\})^2 
	& (\{x\} \in (0, 1/2]) \\
\frac{1}{2}x^2-\frac{1}{2}\{x\}^2 
	& (\{x\} \in [1/2, 1)) \\
\frac{1}{2}x^2 & (\{x\}=0). 
\end{cases}
\]
Here, $\{x\}$ denotes the fractional part of $x \in \bR$. 
See Figure \ref{fig:bgonS} below 
for the shape of $\Upsilon$. 
The following stronger BG inequality 
on the quadric surface 
$S \cong \bP^1 \times \bP^1$ is well-known. 
We include a proof here, 
since it demonstrates the technique which 
we will frequently use in this section.

\begin{lem} \label{lem:BGdP}
Let $F$ be a slope semistable tosion free sheaf on $S$. 
Then we have an inequality 
\begin{equation} \label{eq:bgonS}
\frac{\ch_{2}(F)}{H^2\ch_{0}(F)} \leq 
\Upsilon\left(\mu_H(F) \right). 
\end{equation}
\end{lem}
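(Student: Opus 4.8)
The plan is to apply the wall-crossing machinery of Proposition \ref{prop:bg-tilt*} on the surface $S \cong \bP^1 \times \bP^1$, exactly in the spirit announced in the strategy for Proposition \ref{prop:clifford}. First I would check that the function $\Upsilon$ is star-shaped: for $\alpha > 0$, the segment from $(\beta, \Upsilon(\beta))$ to $(0, \alpha)$ must lie above the graph of $\Upsilon$. This reduces to a piecewise-parabola computation on each interval $[k, k+1/2]$ and $[k+1/2, k+1]$; the key point is that each branch $\tfrac12 x^2 - \tfrac12(1-\{x\})^2$ (resp.\ $\tfrac12 x^2 - \tfrac12\{x\}^2$) is a downward-shifted parabola $\tfrac12 x^2 + (\text{linear})$ lying strictly below $\tfrac12 x^2$, and any chord of $y = \tfrac12 x^2$ from a boundary value up to $(0,\alpha)$ dominates it. Then, by Proposition \ref{prop:bg-tilt*} applied with $n=2$, it suffices to verify the inequality for every object $E \in \mathcal{D}$ on $S$, i.e.\ for every $\mu_H$-semistable torsion-free sheaf with positive rank (case (1) of $\mathcal{D}$) and every $E$ with $\mcH^{-1}(E)$ $\mu_H$-semistable and $\mcH^0(E)$ supported in dimension $0$ (case (2)).

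For case (1), a $\mu_H$-semistable torsion-free sheaf $F$ on $S = \bP^1\times\bP^1$: after twisting by $\mcO(-k, -k)$ we may assume $\mu_H(F) \in [-1/2, 1/2]$, using that $\Upsilon(x + 1) = \Upsilon(x) + x + \tfrac12$ matches the transformation of $\ch_2(F \otimes \mcO(1,1))/(H^2\ch_0)$ under twisting by $H = \mcO(1,1)$ — a short check that the inequality is twist-invariant. Now I would run the standard wall-crossing / Bogomolov-Gieseker argument on the del Pezzo surface $S$: the classical BG inequality gives $\ch_2(F)/(H^2\ch_0(F)) \le \tfrac12 \mu_H(F)^2$, which already settles the regime $\{x\} = 0$; for the refined bound when $\mu_H(F) \in (-1/2,0]\cup[0,1/2)$ one uses Serre duality (or equivalently, semistability forcing $\Hom(\mcO(a,b), F) = 0$ or $\Hom(F, \mcO(a,b)) = 0$ for suitable line bundles together with Riemann--Roch on $S$) to improve the estimate by the fractional correction $-\tfrac12(1-\{x\})^2$ or $-\tfrac12\{x\}^2$. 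This is the classical "stronger BG on del Pezzo/K3 surfaces via Serre duality" mentioned in the introduction, and on $\bP^1\times\bP^1$ it is elementary since $\Pic(S) = \bZ^2$ is explicit.

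For case (2), write $E$ with $\mcH^{-1}(E) = G$ a $\mu_H$-semistable torsion-free sheaf placed in degree $1$ and $\mcH^0(E) = Q$ a zero-dimensional sheaf; then $\ch_0(E) = -\ch_0(G) < 0$, $\ch_1(E) = -\ch_1(G)$, $\ch_2(E) = -\ch_2(G) + \ell$ with $\ell = \mathrm{length}(Q) \ge 0$. Since $\ch_0(E) < 0$, dividing by $H^2\ch_0(E)$ reverses the inequality, so the claim becomes $\ch_2(G)/(H^2\ch_0(G)) \ge \Upsilon(\mu_H(G)) + \ell/(H^2\ch_0(E))$; the term $\ell/(H^2\ch_0(E)) \le 0$ works in our favour, and the remaining assertion is the \emph{reverse} BG-type bound for the semistable sheaf $G$. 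This reverse inequality follows again from applying the case-(1) bound to a twist together with the functional equation of $\Upsilon$ — precisely the same mechanism used in \cite{bms16} to pass from sheaves to the full class $\mathcal{D}$.

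The main obstacle I expect is bookkeeping rather than conceptual: getting the piecewise fractional-part branches of $\Upsilon$ to line up correctly with the Serre-duality improvement in case (1) and with the sign-reversed estimate in case (2), and verifying star-shapedness cleanly across the breakpoints $\{x\} = 1/2$ where $\Upsilon$ is only continuous (not differentiable). None of this requires anything beyond Riemann--Roch on $\bP^1\times\bP^1$, the classical Bogomolov inequality, and Serre duality, so the proof should be short — which is consistent with the author's remark that it is included mainly to showcase the technique.
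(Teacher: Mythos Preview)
Your core idea in ``case (1)'' --- reduce by twisting using $\Upsilon(x+1)=\Upsilon(x)+x+\tfrac12$, then combine slope-stability vanishings with Serre duality and Riemann--Roch --- is exactly the paper's argument, and it is the \emph{entire} proof. The paper normalises to $\mu_H(F)\in(0,1)$, observes $\hom(\mcO(1),F)=\ext^2(\mcO(1),F)=0$ to get $\chi(\mcO(1),F)\le 0$, hence $\ch_2(F)/(H^2\ch_0(F))\le 0$; and symmetrically $\hom(F,\mcO)=\ext^2(F,\mcO)=0$ gives $\ch_2(F)/(H^2\ch_0(F))\le \mu_H(F)-\tfrac12$. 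The minimum of these two linear bounds is $\Upsilon$ on $(0,1)$.

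Everything else in your proposal is either unnecessary or wrong:
\begin{itemize}
\item The lemma is about slope-semistable torsion-free \emph{sheaves}, not tilt-semistable objects. There is no role for Proposition~\ref{prop:bg-tilt*} here; that proposition goes in the other direction (from $\mathcal{D}$ to tilt-stability) and is used only later, in Lemma~\ref{lem:geomstabS}. You have conflated Lemma~\ref{lem:BGdP} with the subsequent lemma.
\item The function $\Upsilon$ is \emph{not} star-shaped. This is precisely why the paper introduces the modification $\widetilde{\Upsilon}$ (the blue lines in Figure~\ref{fig:bgonS}) before invoking Proposition~\ref{prop:bg-tilt*}. Your proposed first step would therefore fail.
\item Your ``case (2)'' is irrelevant to this lemma, and in any case the sign analysis there is off: since $\mu_H(E)=\mu_H(G)$ and $\ch_2(E)/(H^2\ch_0(E))=\ch_2(G)/(H^2\ch_0(G))-\ell/(H^2\ch_0(G))$ with $\ell/(H^2\ch_0(G))\ge 0$, case~(2) would follow \emph{immediately} from case~(1), not from any ``reverse'' inequality.
\end{itemize}
Strip away the wall-crossing wrapper and your case~(1) sketch is the paper's proof.
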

\begin{proof}
Since we have 
$\Upsilon(x+1)=\Upsilon(x)+x+1/2$, 
the claim is invariant under 
tensoring with the line bundle $\mcO_S(H)$. 
Hence we may assume $\mu_H(F) \in (0, 1)$. 
By the stability of $F$ and the Serre duality, 
we have 
\[
\hom(\mcO(1), F)=0, \quad 
\ext^2(\mcO(1), F)=\hom(F, \mcO(-1))=0 
\]
and hence 
$0 \geq -\ext^1\left(\mcO(1), F \right) 
=\chi\left(\mcO(1), F \right)$. 
By computing the RHS using the Riemann-Roch theorem, 
we get the inequality 
\[
\frac{\ch_{2}(F)}{H^2\ch_{0}(F)} \leq 0. 
\]

On the other hand, 
again by the stability of $F$ 
and the Serre duality, 
we also have 
\[
\hom(F, \mcO)=0, \quad 
\ext^2(F, \mcO)=\hom(\mcO, F(-2))=0, 
\]
which imply the inequality 
$0 \geq -\ext^1(F, \mcO)=\chi(F, \mcO)$. 
Hence we obtain 
\[
\frac{\ch_2(F)}{H^2\ch_0(F)} \leq \mu_H(F)-\frac{1}{2}. 
\]

Taking the minimum, 
the inequality (\ref{eq:bgonS}) holds. 
\end{proof}

\begin{rmk}
In \cite{rud94}, Rudakov proved 
an inequality stronger 
than (\ref{eq:bgonS}). 
However, our inequality is already optimal 
at $\mu_H=1/2$ 
(consider $F=\mcO_S(1, 0)$). 
Because of this fact, 
we cannot improve our inequality 
in Theorem \ref{thm:mainintro} 
at $\mu_H=1/2$, 
even if we use the result in \cite{rud94}. 
\end{rmk}

We define a function 
$\widetilde{\Upsilon} \colon \bR \to \bR$ as 
\[
\widetilde{\Upsilon}(x):=
\begin{cases}
\Upsilon(x) & (|x| \in [0, 1]) \\
\max\left\{ 
	\Upsilon(x), 
	\frac{1}{2} \lfloor |x| \rfloor x
	\right\} 
& (|x| \geq 1). 
\end{cases}
\]
Here, $\lfloor |x| \rfloor$ 
denotes the integral part of 
the absolute value of 
a real number $x \in \bR$. 
Note that the function 
$\widetilde{\Upsilon}$ 
is star-shaped 
and we have $\Upsilon(x) \leq \widetilde{\Upsilon}(x)$ 
for all $x \in \bR$, 
see Figure \ref{fig:bgonS}.

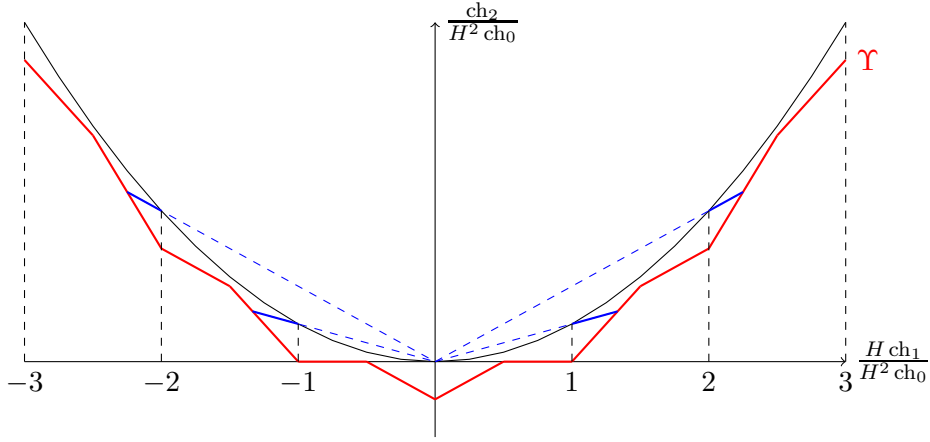
\begin{figure}[htbp]
\begin{center}
\begin{tikzpicture}[xscale=1.8]
\draw [->] (-3, 0) -- (3, 0) node[right]{$\frac{H\ch_1}{H^2\ch_0}$}; 
\draw[->] (0, -1) -- (0, 9/2) node[right]{$\frac{\ch_2}{H^2\ch_0}$}; 
\draw[red, thick, domain=0:1/2] plot(\x,\x-1/2); 
\draw[red, thick] (1/2, 0) -- (1, 0); 
\draw[red, thick, domain=1:3/2] plot(\x,2*\x-2); 
\draw[red, thick, domain=3/2:2] plot(\x, \x-1/2); 
\draw[red, thick, domain=2:5/2] plot(\x, 3*\x-9/2); 
\draw[red, thick, domain=5/2:3] 
plot(\x, 2*\x-2) node[right]{$\Upsilon$}; 
\draw[red, thick, domain=-3:-5/2] plot(\x, -2*\x-2); 
\draw[red, thick, domain=-5/2:-2] plot(\x, -3*\x-9/2); 
\draw[red, thick, domain=-2:-3/2] plot(\x, -\x-1/2); 
\draw[red, thick, domain=-3/2:-1] plot(\x,-2*\x-2); 
\draw[red, thick] (-1, 0) -- (-1/2, 0); 
\draw[red, thick, domain=-1/2:0] plot(\x,-\x-1/2); 
\draw[domain=-3:3] plot(\x, 1/2*\x*\x); 
\draw[blue, thick, domain=1:4/3] plot(\x, {abs(1/2*\x)}); 
\draw[blue, dashed, domain=0:4/3] plot(\x, {abs(1/2*\x)}); 
\draw[blue, thick, domain=-4/3:-1] plot(\x, {abs(1/2*\x)}); 
\draw[blue, dashed, domain=-4/3:0] plot(\x, {abs(1/2*\x)}); 
\draw[blue, thick, domain=2:9/4] plot(\x, {abs(\x)});
\draw[blue, dashed, domain=0:9/4] plot(\x, {abs(\x)}); 
\draw[blue, thick, domain=-9/4:-2] plot(\x, {abs(\x)}); 
\draw[blue, dashed, domain=-9/4:0] plot(\x, {abs(\x)}); 
\draw[dashed] (1, 0) node[below]{$1$} -- (1, 1/2); 
\draw[dashed] (-1, 0) node[below]{$-1$} -- (-1, 1/2); 
\draw[dashed] (2, 0) node[below]{$2$} -- (2, 2); 
\draw[dashed] (-2, 0) node[below]{$-2$} -- (-2, 2); 
\draw[dashed] (3, 0) node[below]{$3$} -- (3, 9/2); 
\draw[dashed] (-3, 0) node[below]{$-3$} -- (-3, 9/2); 
\end{tikzpicture}
\end{center}
\caption{The strong BG inequality $\Upsilon$ (red curve) on the quadric surface.
Blue lines show the modified curve $\widetilde{\Upsilon}$.} 
\label{fig:bgonS}
\end{figure}

We have the following consequences of 
Lemma \ref{lem:BGdP}. 

\begin{lem} \label{lem:geomstabS}
The following statements hold: 
\begin{enumerate}
\item Fix a positive real number $\alpha >0$. 
Let $F \in \Coh^0(S)$ 
be a $\nu_{0, \alpha}$-semistable object 
with $\ch_0(F) \neq 0$. 
Then the Chern character of $F$ satisfies 
the inequality 
\begin{equation} \label{eq:bgonS-tilt}
\frac{\ch_{2}(F)}{H^2\ch_{0}(F)} \leq 
\widetilde{\Upsilon}\left(\mu_H(F) \right). 
\end{equation}

\item For all real numbers 
$\beta, \alpha \in \bR$ with 
$\alpha > \Upsilon(\beta)$, 
the pair 
$\left(Z_{\beta, \alpha}, \Coh^\beta(S) \right)$ 
defines a stability condition on $D^b(S)$. 
Here the group homomorphism 
$Z_{\beta, \alpha} \colon K(S) \to \bC$ is defined as 
\[
Z_{\beta, \alpha}:=-\ch_2+\alpha H^2\ch_0
	+i\left(H\ch_1-\beta H^2\ch_0 \right). 
\]
\end{enumerate}
\end{lem}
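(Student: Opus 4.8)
The plan is to prove the two parts separately; both rely on the stronger surface inequality of Lemma \ref{lem:BGdP}, but part (2) uses its optimality in an essential way. For part (1), I would simply apply Proposition \ref{prop:bg-tilt*} to the star-shaped function $f=\widetilde{\Upsilon}$, so the only thing to check is its hypothesis: every $E\in\mathcal{D}$ with $\ch_0(E)\neq 0$ satisfies $\ch_2(E)/(H^2\ch_0(E))\le\widetilde{\Upsilon}(\mu_H(E))$. If $E$ falls in case (1) of the definition of $\mathcal{D}$, then $E$ is $\mu_H$-semistable of positive rank, hence torsion free, and Lemma \ref{lem:BGdP} together with $\Upsilon\le\widetilde{\Upsilon}$ gives the inequality. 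If $E$ falls in case (2), set $G:=\mcH^{-1}(E)$ and $Q:=\mcH^0(E)$; since $E\in\Coh^0(S)$, $G$ is torsion free, and we may assume $G\neq 0$ (otherwise $\ch_0(E)=0$). Then $Q$ is supported in dimension $\le 0$, $\ch(E)=\ch(Q)-\ch(G)$, and a short computation gives $\mu_H(E)=\mu_H(G)$ and
\[
\frac{\ch_2(E)}{H^2\ch_0(E)}=\frac{\ch_2(G)-\ch_2(Q)}{H^2\ch_0(G)}\le\frac{\ch_2(G)}{H^2\ch_0(G)}\le\Upsilon(\mu_H(G))=\Upsilon(\mu_H(E))\le\widetilde{\Upsilon}(\mu_H(E)),
\]
using $\ch_2(Q)\ge 0$, $\ch_0(G)>0$, and Lemma \ref{lem:BGdP} once more. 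This verifies the hypothesis and proves (1).

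For part (2), the heart of the matter is the positivity of $Z_{\beta,\alpha}$ on the nonzero objects of $\Coh^\beta(S)$, and this is exactly the step where the optimal bound $\Upsilon$ (rather than the weaker $\tfrac12\beta^2$) is needed in order to reach the larger region $\alpha>\Upsilon(\beta)$. Given a nonzero $E\in\Coh^\beta(S)$, the construction of the tilt yields $\Im Z_{\beta,\alpha}(E)=H\ch_1^\beta(E)\ge 0$, so it suffices to show that $\Im Z_{\beta,\alpha}(E)=0$ forces $\Re Z_{\beta,\alpha}(E)=-\ch_2(E)+\alpha H^2\ch_0(E)<0$. One checks that $H\ch_1^\beta(E)=0$ forces $\mcH^0(E)$ to be supported in dimension $\le 0$ and $\mcH^{-1}(E)$ to be either zero or a $\mu_H$-semistable torsion free sheaf of slope exactly $\beta$; writing $G=\mcH^{-1}(E)$, $Q=\mcH^0(E)$ as before, one gets $\Re Z_{\beta,\alpha}(E)=\ch_2(G)-\ch_2(Q)-\alpha H^2\ch_0(G)$, which when $G\neq 0$ is bounded above by $H^2\ch_0(G)\bigl(\Upsilon(\beta)-\alpha\bigr)<0$ by Lemma \ref{lem:BGdP} and the hypothesis $\alpha>\Upsilon(\beta)$, and equals $-\ch_2(Q)<0$ in the degenerate case $G=0$.

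It then remains to establish the Harder--Narasimhan property and the support property for $(Z_{\beta,\alpha},\Coh^\beta(S))$. Here I would note that on the surface $S$ the Bridgeland slope $-\Re Z_{\beta,\alpha}/\Im Z_{\beta,\alpha}$ coincides with the tilt slope $\nu_{\beta,\alpha}$, so that $Z_{\beta,\alpha}$-stability on $\Coh^\beta(S)$ is literally $\nu_{\beta,\alpha}$-stability; the existence of HN filtrations then follows from the Noetherianity of $\Coh^\beta(S)$ together with the standard argument of \cite{bms16} (using, for irrational $\beta$, the form of the argument that does not rely on discreteness of $\Im Z$), and the support property follows from the classical Bogomolov--Gieseker inequality $\overline{\Delta}_H\ge 0$, exactly as in \cite{bms16, li19a}. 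I expect the positivity step --- namely correctly pinning down the structure of the objects with $\Im Z_{\beta,\alpha}=0$ and feeding in Lemma \ref{lem:BGdP} --- to be the only substantive point; the HN and support properties are routine given the cited results.
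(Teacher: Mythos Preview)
Your proposal is correct and follows the same approach as the paper: part (1) is deduced from Lemma~\ref{lem:BGdP} via Proposition~\ref{prop:bg-tilt*}, and part (2) is the standard Arcara--Bertram/Bridgeland argument with the classical BG inequality replaced by the sharper bound $\Upsilon$. The paper's own proof is a two-line citation of exactly these ingredients (Lemma~\ref{lem:BGdP}, Proposition~\ref{prop:bg-tilt*}, and \cite{ab13,bri08}); you have simply unpacked the verification of the hypothesis of Proposition~\ref{prop:bg-tilt*} for both cases of $\mathcal{D}$ and the positivity step $\Re Z_{\beta,\alpha}(E)<0$ on the objects with $\Im Z_{\beta,\alpha}(E)=0$, which is precisely what those references contain.
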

\begin{proof}
(1) The first assertion follows from 
Lemma \ref{lem:BGdP} and Proposition \ref{prop:bg-tilt*}. 

(2) For the second assertion, 
we can apply the arguments 
in \cite{ab13, bri08} 
by replacing the classical BG inequality with 
the stronger one (\ref{eq:bgonS}). 
\end{proof}

In the next two lemmas, 
we control the position of the first possible wall 
for $\iota_*F$, where $F$ is a stable bundle on $C$, 
and then bound the slopes of the HN factors of $\iota_*F$ 
with respect to BN stability. 

\begin{lem} \label{lem:1stwall}
Let $F$ be a slope stable vector bundle on $C$ 
with rank $r$, slope $\mu$. 
Let $(\beta_1, \alpha_1), (\beta_2, \alpha_2)$, 
$\beta_1 < 0 < \beta_2$, 
be the end points of a wall for $\iota_*F$ 
with respect to $\nu_{\beta, \alpha}$-stability. 
Then we have $\beta_2-\beta_1 \leq 6$. 
\end{lem}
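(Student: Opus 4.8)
The plan is to bound the width of a wall for $\iota_*F$ by translating the wall-crossing data into an inequality on discriminants, and then using the stronger BG inequality $\widetilde{\Upsilon}$ on $S$ (Lemma \ref{lem:geomstabS}(1)) to control the Chern characters of the semistabilizing sub/quotient objects. First I would record the relevant Chern character data: since $F$ has rank $r$ and slope $\mu$ on $C \in |\mcO_S(6,6)|$, the class $v := \ch(\iota_*F)$ has $\ch_0 = 0$, $H\ch_1 = 12r$, and $\ch_2$ determined by $\mu$ (via Grothendieck--Riemann--Roch, as in the Remark preceding Lemma \ref{lem:bnslope}, giving $\nu_{BN}(\iota_*F) = t - 3$ with $t = \mu/12$). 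A wall for $v$ with endpoints $(\beta_1,\alpha_1)$ and $(\beta_2,\alpha_2)$ on the semicircle comes from a short exact sequence $0 \to A \to \iota_*F \to B \to 0$ in $\Coh^{\beta}(S)$ along the wall, where $\nu_{\beta,\alpha}(A) = \nu_{\beta,\alpha}(\iota_*F) = \nu_{\beta,\alpha}(B)$ for $(\beta,\alpha)$ on the wall.

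The key step is the following: on the wall, $p_H(A)$, $p_H(\iota_*F)$, $p_H(B)$ are collinear (they lie on the line through the wall's center $(s,0)$ with appropriate slope), and the endpoints $\beta_1, \beta_2$ are exactly the $\beta$-values where this common line meets the parabola $\alpha = \tfrac12\beta^2$ — equivalently, where $\overline{\Delta}_H$ of the corresponding numerical class would be achieved on the boundary. Concretely, $\beta_1$ and $\beta_2$ are the two roots of the quadratic $H\ch_1^\beta(\iota_*F) \cdot (\text{something}) - \ldots$; their difference squared is controlled by $\overline{\Delta}_H(\iota_*F)$ together with $\overline{\Delta}_H(A)$ and $\overline{\Delta}_H(B)$. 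Since $\ch_0(\iota_*F) = 0$, one has $\overline{\Delta}_H(\iota_*F) = (H\ch_1)^2 = 144 r^2$, and I would express $(\beta_2 - \beta_1)^2$ in terms of $144 r^2$ and the ranks/discriminants of $A$ and $B$. Because $A, B$ are tilt-semistable, Lemma \ref{lem:geomstabS}(1) forces $\ch_2(A)/(H^2\ch_0(A)) \leq \widetilde{\Upsilon}(\mu_H(A))$ and similarly for $B$; feeding these bounds into the discriminant identity should yield $(\beta_2 - \beta_1)^2 \leq 36$, i.e. $\beta_2 - \beta_1 \leq 6$. The number $6$ is exactly the "width" built into $\widetilde\Upsilon$ by the periodicity $\Upsilon(x+1) = \Upsilon(x) + x + 1/2$ combined with the factor $12$ relating $\mu_H$ on $S$ to the slope on $C$ — morally, $\mcO_S(H) = \mcO_S(1,1)$ restricts to degree $12$ on $C$, and $6 = 12/2$ is the half-period.

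The main obstacle I expect is making the discriminant/collinearity bookkeeping precise and uniform: one must check that $A$ and $B$ (or rather $\mcH^{-1}$ and $\mcH^0$ of them) are genuinely slope-semistable sheaves to which Lemma \ref{lem:geomstabS}(1) applies, handle the case $\ch_0(A) = 0$ or $\ch_0(B) = 0$ separately (there the inequality is usually immediate or vacuous), and verify that the estimate does not degrade when $A$ or $B$ has large rank. A clean way to organize this is to argue by contradiction: if $\beta_2 - \beta_1 > 6$, then the line through $p_H(A)$ and $p_H(B)$ with the given slope would have to pass strictly below the graph of $\widetilde\Upsilon$ on an interval of length $> 6$, contradicting that both $p_H(A)$ and $p_H(B)$ lie on or below that graph (using star-shapedness of $\widetilde\Upsilon$ to control the interior of the segment). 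This reduces the whole lemma to the elementary geometric fact that no chord of the region under $\widetilde\Upsilon$ can be "too flat over too long an interval," which follows from the explicit piecewise description of $\Upsilon$.
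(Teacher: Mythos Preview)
Your approach does not work, and the gap is essential rather than technical. The claimed ``elementary geometric fact'' at the end --- that no chord of the region under $\widetilde\Upsilon$ with slope $t-3$ can have horizontal extent exceeding $6$ --- is simply false: a line of slope $-2$, say, with large $y$-intercept meets the graph of $\widetilde\Upsilon$ (which grows like $\tfrac12\beta^2$) at two points whose $\beta$-coordinates differ by arbitrarily much. More concretely, the wall line through $p_H(A)$ has equation $\alpha=(t-3)(\beta-\mu_H(A))+\ch_2(A)/(H^2\ch_0(A))$, and the strong BG inequality only forces $p_H(A)$ and $p_H(B)$ to lie below $\widetilde\Upsilon$; it places no upper bound on $\mu_H(A)$, hence none on the $y$-intercept of the wall, hence none on $\beta_2-\beta_1$. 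The constraints $0<H\ch_1(A)<12r$ coming from $A\subset\iota_*F$ in $\Coh^0(S)$ do not help either, since the rank of $A$ is unbounded. Your discriminant heuristic $\overline{\Delta}_H(\iota_*F)=144r^2$ does not translate into a wall-width bound for a rank-zero object: the walls for such objects are lines, not semicircles, and the usual radius formula degenerates.

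The paper's proof does \emph{not} use the strong BG inequality on $S$ at all. Instead it extracts the bound $6$ directly from the geometry $C\in|6H_S|$: writing the destabilizing sequence $0\to F_2\to\iota_*F\to F_1\to 0$, setting $Q=F_2/\text{(torsion)}$, one observes that $Q/\mcH^{-1}(F_1)$ is a subsheaf of a sheaf scheme-theoretically supported on $C$, hence is a quotient of $Q|_C$, which has $H\ch_1(Q|_C)=6H^2\ch_0(Q)$. This yields $\mu_H(Q)-\mu_H(\mcH^{-1}(F_1))\le 6$. Bertram's nested wall theorem then gives $\mu_H(Q)>\beta_2-\epsilon$ and $\mu_H(\mcH^{-1}(F_1))\le\beta_1+\epsilon$, and combining these produces $\beta_2-\beta_1\le 6$. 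So the number $6$ is the class of $C$ as a divisor on $S$, not anything to do with the periodicity of $\Upsilon$; the same argument in \S\ref{section:2to1} gives $8$ for the double-cover case because there $C\in|8H_S|$.
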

\begin{proof}
By the Grothendieck-Riemann-Roch theorem, we have 
\begin{equation} \label{eq:grr}
\ch(\iota_{*}F)=(0, 6rH, r(\mu-36)). 
\end{equation}
Suppose that there exists 
a positive integer $\alpha$ and 
a destabilizing sequence 
\[
0 \to F_{2} \to \iota_{*}F \to F_{1} \to 0
\]
in $\Coh^0(S)$ for $\nu_{0, \alpha}$-stability. 
Denote by $W$ the corresponding wall. 
Note that $F_2$ is a coherent sheaf. 
Let $T \subset F_2$ be a torsion part and put 
$Q:=F_2/T$. 
We have the following diagram 
in the tilted category $\Coh^0(S)$: 
\[
\xymatrix{
& &0 \ar[d] &0 \ar[d] & & \\
& &T \ar[d] \ar@{=}[r] &T \ar[d] & & \\
&0 \ar[r] &F_2 \ar[d] \ar[r] &\iota_*F \ar[d] \ar[r] &F_1 \ar[r] \ar@{=}[d] &0 \\
&0 \ar[r] &Q \ar[d] \ar[r] &\iota_*F/T \ar[d] \ar[r] &F_1 \ar[r] &0 \\
& &0 &0 & &
}
\]

By taking the $\Coh(S)$-cohomology 
of the bottom row in the above diagram, 
we get the exact sequence 
\[
0 \to Q/\mcH^{-1}(F_1) \to \iota_*F/T \to F_1 \to 0 
\]
in $\Coh(S)$. 
In particular, the sheaf 
$Q/\mcH^{-1}(F_1)$ 
is scheme-theoretically supported on the curve $C$. 
Hence we have a surjection 
$Q|_C \twoheadrightarrow Q/\mcH^{-1}(F_1)$ 
and so get an inequality 
\[
6H^2\ch_0(Q)=H\ch_1(Q|_C) \geq 
H\ch_1(Q/\mcH^{-1}(F_1)). 
\]
Note also that we have 
$\ch_0(Q)=\ch_0(\mcH^{-1}(F_1))$. 
Now we have 
\begin{equation} \label{eq:mudiff}
\begin{aligned}
\mu_H(Q)-\mu_H(\mcH^{-1}(F_1)) 
=\frac{H\ch_1(Q/\mcH^{-1}(F_1))}{H^2\ch_0(Q)} \leq 6. 
\end{aligned}
\end{equation}

Now let $(\beta_1, \alpha_1), (\beta_2, \alpha_2)$  
be the end points of the wall $W$ 
with $\beta_1 < 0 < \beta_2$. 
By Bertram's nested wall theorem 
(see e.g. \cite[Lemma 2.9]{li19b},  \cite{maci14}), 
we know that for $0 < \epsilon \ll 1$, 
we have 
\[
Q \in \Coh^{\beta_{2}-\epsilon}(S), \quad 
\mcH^{-1}(F_{1})[1] \in \Coh^{\beta_{1}+\epsilon}(S), 
\]
which in particular imply 
\[
\mu_{H}(Q) > \beta_{2}-\epsilon, \quad 
\mu_{H}(\mcH^{-1}(F_{1})) \leq \beta_{1}+\epsilon. 
\]
Combining with the inequality (\ref{eq:mudiff}), 
we have the desired inequality 
\[
\beta_{2}-\beta_{1} \leq 6. 
\]
\end{proof}

\begin{lem} \label{lem:bnslope}
Let $F$ be a slope stable vector bundle on $C$ 
with rank $r$, slope $\mu \in (0, 24)$. 
Let $t:=\mu/12$. 
The following statements hold: 
\begin{enumerate}
\item If $t \in \left(0, 2-\frac{\sqrt{14}}{2} \right]$, 
then the sheaf $\iota_*F$ is BN-stable. 

\item We have 
\[
\nu^+_{BN}(\iota_*F) \leq 
\begin{cases}
1-\frac{1}{2t} 
	& (t \in (2-\sqrt{14}/2, 1/2] \cup [3/2, \sqrt{14}/2]) \\
\frac{-9t+11}{-8t+7} 
	& (t \in [\sqrt{14}/2, 23/12]) \\
3t-5 
	& (t \in [23/12, 2]).
\end{cases}
\]

\item We have 
\[
\nu^-_{BN}(\iota_*F) \geq  
\begin{cases}
\frac{-5(2t-7)}{2(t-6)} 
	& (t \in (2-\sqrt{14}/2, 1/2] \cup [3/2, 11/6] ) \\
-2 
	& (t \in [11/6, 2]). 
\end{cases}
\]
\end{enumerate}
\end{lem}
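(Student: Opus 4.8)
The plan is to prove all three parts of Lemma \ref{lem:bnslope} by a wall-crossing analysis on the quadric surface $S$, applied to the torsion sheaf $\iota_*F$ whose Chern character is $(0, 6rH, r(\mu-36))$ by \eqref{eq:grr}. First I would compute the Brill--Noether slope of $\iota_*F$: since $\ch_1(\iota_*F) = 6rH$ and $H^2 = 2$ on $S \cong \bP^1 \times \bP^1$, one gets $H\ch_1(\iota_*F) = 12r$, while $\ch_2(\iota_*F) = r(\mu - 36)$, so $\nu_{BN}(\iota_*F) = (\mu-36)/12 = t - 3$. The strategy is then: if $\iota_*F$ admits a destabilizing subobject/quotient with respect to tilt stability at some point on the ray $\beta = 0$, the endpoints $(\beta_1, \alpha_1)$, $(\beta_2, \alpha_2)$ of the corresponding numerical wall satisfy $\beta_2 - \beta_1 \le 6$ by Lemma \ref{lem:1stwall}, and the wall is a semicircle through $p_H(\iota_*F)$; moreover the Chern characters of the destabilizing factors lie on this wall and satisfy the stronger BG inequality \eqref{eq:bgonS-tilt} coming from Lemma \ref{lem:geomstabS}(1). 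Combining these constraints bounds how far the wall can reach, hence bounds $\nu_{BN}^{\pm}$ of the HN factors of $\iota_*F$ with respect to BN stability.

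For part (1), I would argue that when $t$ is small enough, namely $t \in (0, 2 - \sqrt{14}/2]$, there is simply no room for a wall: any putative semicircular wall passing through $p_H(\iota_*F) = (0, \tfrac{1}{2}(t-3))$ with $\beta_2 - \beta_1 \le 6$ would force a destabilizing factor whose point $(\mu_H, \ch_2/(H^2\ch_0))$ violates the inequality $\widetilde{\Upsilon}$; the numerical value $2 - \sqrt{14}/2$ is exactly the threshold where the relevant circle through $p_H(\iota_*F)$ first becomes tangent to (or crosses) the curve $\widetilde{\Upsilon}$, so for smaller $t$ the sheaf $\iota_*F$ stays $\nu_{0,\alpha}$-stable for all $\alpha$ and is therefore BN-stable. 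For parts (2) and (3), when $t$ exceeds this threshold, walls can occur, and I would locate the \emph{first} (largest) wall: its leftmost and rightmost intersections with the $\beta$-axis are controlled by requiring that the destabilizing factors, which sit on the wall, obey $\widetilde{\Upsilon}$. The extreme positions of these factors then translate, via the definition $\nu_{BN}(G) = \tfrac{H\ch_2(G)}{H^2 \ch_1(G)}$ evaluated at $\beta = 0$, into the piecewise bounds $\nu_{BN}^+(\iota_*F) \le 1 - \tfrac{1}{2t}$, $\tfrac{-9t+11}{-8t+7}$, $3t-5$ and $\nu_{BN}^-(\iota_*F) \ge \tfrac{-5(2t-7)}{2(t-6)}$, $-2$ on the indicated subintervals. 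Each case is a matter of writing down the semicircle through $p_H(\iota_*F)$ of the critical radius, intersecting with the graph of $\widetilde{\Upsilon}$ (which is piecewise linear/quadratic and already computed), and reading off the slope of the factor with minimal/maximal $\ch_2/H\ch_1$.

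Concretely, the order of steps would be: (i) record $\ch(\iota_*F)$ and $\nu_{BN}(\iota_*F) = t-3$; (ii) observe that $\iota_*F$ is $\nu_{0,\alpha}$-stable for $\alpha \gg 0$ (it is a stable torsion sheaf on a curve, hence tilt-stable in the large-$\alpha$ chamber), so the HN filtration of $\iota_*F$ with respect to BN stability is governed by which walls on $\beta = 0$ it crosses; (iii) for a destabilizing factor $G$ appearing on a wall with endpoints $\beta_1 < 0 < \beta_2$, use the nested-wall theorem to place $\mu_H$ of the relevant sub/quotient sheaves relative to $\beta_1, \beta_2$, use $\beta_2 - \beta_1 \le 6$ from Lemma \ref{lem:1stwall}, and use $\ch_2(G)/(H^2\ch_0(G)) \le \widetilde{\Upsilon}(\mu_H(G))$; (iv) optimize over the possible factors to find the first wall, and (v) compute $\nu_{BN}$ of the extremal factor in each range of $t$, matching the stated piecewise formulas. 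The main obstacle I anticipate is step (iv)--(v): pinning down precisely which numerical class gives the first wall requires a careful case analysis over the several linear pieces of $\widetilde{\Upsilon}$ (corresponding to $\mu_H(G)$ lying in $[0,1]$, $[1, 3/2]$, $[3/2, 2]$, etc.), and keeping track of when the optimum switches from one candidate factor to another is what produces the multiple cases and the somewhat unusual breakpoints $2 - \sqrt{14}/2$, $\sqrt{14}/2$, $11/6$, $23/12$; the bookkeeping, rather than any single conceptual difficulty, is where the work lies. Throughout I would mirror the structure of the analogous lemma in \cite{li19b}, adjusting only the numerics ($H^2 = 2$, $C \in |\mcO_S(6,6)|$, $\ch(\iota_*F) = (0, 6rH, r(\mu-36))$) to the present surface.
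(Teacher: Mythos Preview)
Your strategy is the paper's: combine $\beta_2-\beta_1\le 6$ from Lemma~\ref{lem:1stwall} with the strong BG curve $\widetilde{\Upsilon}$ to locate the first possible wall for $\iota_*F$, then read off $\nu_{BN}^\pm$ from the endpoints of that wall on $\widetilde{\Upsilon}$. However, your geometric picture is wrong in a way that would derail the actual computation. In the parametrization used here (and in \cite{li19b}) the coordinate $\alpha$ is of $\ch_2/(H^2\ch_0)$ type, so that $p_H(E)$ is literally a point of the $(\beta,\alpha)$-plane; walls for $\iota_*F$ are then \emph{line segments of slope $\nu_{BN}(\iota_*F)=t-3$}, not semicircles. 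Moreover $p_H(\iota_*F)$ is undefined since $\ch_0(\iota_*F)=0$: the torsion sheaf determines only the common slope $t-3$ of all its walls, not a point they pass through, and your formula $(0,\tfrac12(t-3))$ has no meaning in either convention. Since $\widetilde{\Upsilon}$ and Lemma~\ref{lem:1stwall} are both stated in the line parametrization, mixing in semicircles makes the argument incoherent.

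You are also missing a structural ingredient of the case analysis. Because $\Upsilon$ jumps at integers, the endpoints of a wall may lie either on the graph of $\Upsilon$ (what the paper calls Type~A) or on a vertical gap segment $L_n=\{n\}\times\bigl(\tfrac{n^2-1}{2},\tfrac{n^2}{2}\bigr)$ (Type~B). For Type~A, the bound $\beta_2-\beta_1\le 6$ forces every wall below the line through $(t,\Upsilon(t))$ and $(t-6,\Upsilon(t-6))$; for Type~B one must separately test the line of slope $t-3$ through the relevant corner, e.g.\ $(-4,8)=(-4,\Upsilon(-4))$ when $t\in[3/2,2]$. The unusual breakpoints $\sqrt{14}/2$, $11/6$, $23/12$ arise precisely from comparing these two candidate first walls and from which linear piece of $\widetilde{\Upsilon}$ the winner meets. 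Once the first possible wall $L$ is fixed, the paper argues that the maximal BN-destabilizing subobject $E_1$ has $\ch_0(E_1)>0$, satisfies $\widetilde{\Upsilon}$, and has $p_H(E_1)$ below $L$; hence $\nu_{BN}^+(\iota_*F)=\nu_{BN}(E_1)\le \alpha_{\max}/\beta_{\max}$ with $(\beta_{\max},\alpha_{\max})$ the right intersection of $L$ with $\widetilde{\Upsilon}$, and dually for $\nu_{BN}^-$.
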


\begin{proof}
Let $W$ be a wall for $\iota_*F$, 
and let $(\beta_1, \alpha_1), (\beta_2, \alpha_2)$  
be the end points of the wall $W$ 
with $\beta_1 < 0 < \beta_2$. 
Recall that the wall $W$ is a line segment 
with slope 
$\nu_{BN}(\iota_*F)=\mu/12-3=t-3$ 
(see (\ref{eq:grr}) for the second equality). 
Since the curve $\Upsilon$ is not continuous 
when $\beta \in \bZ$, 
the points $(\beta_i, \alpha_i)$ are 
either on the graph of $\Upsilon$, 
or on the vertical lines 
\[
L_n:=\left\{ 
(n, y) : \frac{n^2-1}{2} < y < \frac{n^2}{2} 
\right\}, 
\quad n \in \bZ.  
\]
When both of the end points 
$(\beta_i, \alpha_i)$ are 
on the curve $\Upsilon$, 
we say that the wall $W$ is 
of {\it Type A}, 
otherwise, we say it is of {\it Type B}.  

First assume that $W$ is of Type A. 
By Lemma \ref{lem:1stwall}, 
we know that $\beta_2-\beta_1 \leq 6$. 
Hence the slope of the line through 
$(\beta_{2}, \Upsilon(\beta_{2}))$ and 
$(\beta_{2}-6, \Upsilon(\beta_{2}-6))$ 
is smaller than or equal to that of $W$, i.e., 
$\beta_{2}-3 \leq t-3$. 
We conclude that every Type A wall is 
below the line 
$y=(t-3)(x-t)+\Upsilon(t)$. 

On the other hand, 
for a given point $p=(\beta, \alpha) \in L_n$, 
let $W_p$ be the line passing through 
the point $p$ with slope $t-3$. 
It is easy to compute the intersection points 
of $W_p$ and $\Upsilon \cup \bigcup_{n \in \bZ}L_n$. 
Together with the constraint $\beta_2-\beta_1 \leq 6$, 
we can find the first possible wall of Type B. 

Using these observations, 
we can list up the equation of 
the first possible wall: 
\begin{itemize}
\item When $t \in [0, 1/2]$, 
the following is the first possible wall 
\[
y=(t-3)(x-t)+t-1/2, 
\]
Note that if $t \in [0, 2-\sqrt{14}/2]$, 
it is negative at $x=0$, 
hence $\iota_*F$ is BN stable. 

\item When $t \in [3/2, 2]$, 
one of the following is the first possible wall 
\[
y=(t-3)(x-t)+t-1/2, \quad 
y=(t-3)(x+4)+8. 
\]
The first one is the line 
passing through the points 
$(t, \Upsilon(t))$ and 
$(t-6, \Upsilon(t-6))$. 
The second one is the line 
with slope $t-3$, passing through the point 
$(-4, \Upsilon(-4))$. 
See Figure \ref{fig:BGonT} below. 
\end{itemize}

Let $L$ be the first possible wall 
described above, 
and let $(\beta_{\max}, \alpha_{\max})$, 
$(\beta_{\min}, \alpha_{\min})$ 
be the intersection points of $L$ 
with the curve $\Upsilon$ 
with $\beta_{\min}<\beta_{\max}$. 
Then any wall $W$ should be below the line $L$. 
Now consider the maximal destabilizing subobject 
$E_{1} \subset \iota_{*}F$ 
with respect to the BN stability. 
We have three numerical constraints on $E_{1}$: 
\begin{itemize}
\item Since $\iota_{*}F$ is $\nu_{\alpha, 0}$-stable 
for $\alpha$ sufficiently large, 
we have $\ch_{0}(E_{1}) >0$. 

\item $E_{1}$ satisfies 
the BG type inequality (\ref{eq:bgonS-tilt}). 

\item The point $p_{H}(E_{1})$ is below the line $L$. 
\end{itemize}

Among all points satisfying the above three conditions, 
its slope becomes maximum at the point 
$(\alpha_{\max}, \beta_{\max})$, 
hence we get the bound 
\[
\nu_{BN}(E_{1}) \leq \frac{\alpha_{\max}}{\beta_{\max}}. 
\]

Now the straightforward computation shows the result. 
Similarly we can get the bound 
$\nu^-_{BN}(\iota_{*}F) \geq \alpha_{\min}/\beta_{\min}$. 

\end{proof}

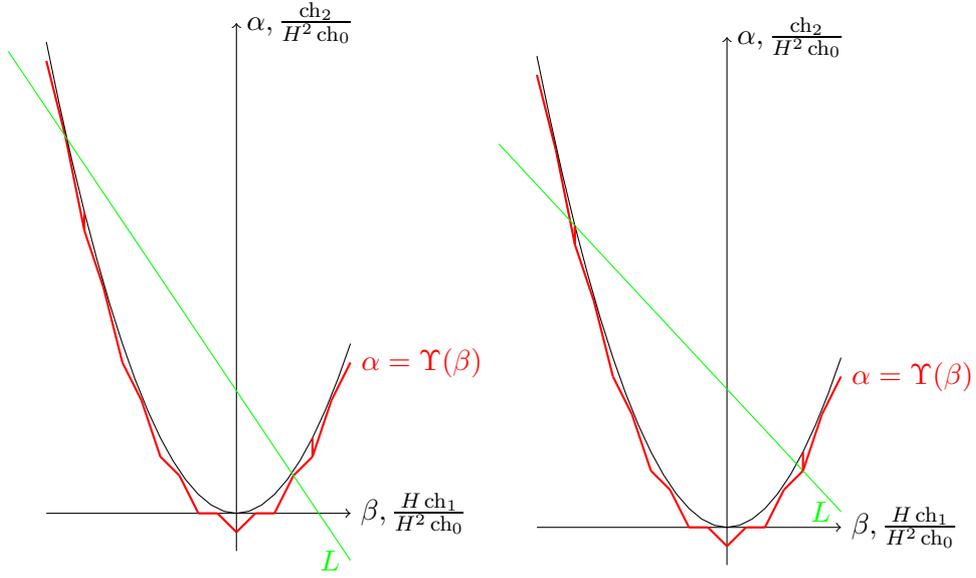
\begin{figure}[htbp]
\begin{tabular}{c}
\begin{minipage}{0.5\hsize}
\begin{center}
\begin{tikzpicture}[scale=0.5]
\draw [->] (-5, 0) -- (3, 0) node[right]{$\beta, \frac{H\ch_1}{H^2\ch_0}$}; 
\draw[->] (0, -1) -- (0, 13) node[right]{$\alpha, \frac{\ch_2}{H^2\ch_0}$}; 
\draw[red, thick, domain=0:1/2] plot(\x,\x-1/2); 
\draw[red, thick] (1/2, 0) -- (1, 0); 
\draw[red, thick, domain=1:3/2] plot(\x,2*\x-2); 
\draw[red, thick, domain=3/2:2] plot(\x, \x-1/2); 
\draw[red, thick, domain=2:5/2] plot(\x, 3*\x-9/2); 
\draw[red, thick, domain=5/2:3] plot(\x, 2*\x-2) 
node[right]{$\alpha=\Upsilon(\beta)$}; 
\draw[red, thick, domain=-5:-9/2] plot(\x, -4*\x-8); 
\draw[red, thick, domain=-9/2:-4] plot(\x, -5*\x-25/2); 
\draw[red, thick, domain=-4:-7/2] plot(\x, -3*\x-9/2); 
\draw[red, thick, domain=-7/2:-3] plot(\x, -4*\x-8); 
\draw[red, thick, domain=-3:-5/2] plot(\x, -2*\x-2); 
\draw[red, thick, domain=-5/2:-2] plot(\x, -3*\x-9/2); 
\draw[red, thick, domain=-2:-3/2] plot(\x, -\x-1/2); 
\draw[red, thick, domain=-3/2:-1] plot(\x,-2*\x-2); 
\draw[red, thick] (-1, 0) -- (-1/2, 0); 
\draw[red, thick, domain=-1/2:0] plot(\x,-\x-1/2); 
\draw[domain=-5:3] plot(\x, 1/2*\x*\x); 
\draw[green, domain=-6:3] plot(\x, -3/2*\x+13/4) node[left]{$L$}; 
\draw[red, thick] (-4, 15/2) -- (-4, 8); 
\draw[red, thick] (2, 2) -- (2, 3/2); 
\end{tikzpicture}
\end{center}
\end{minipage}

\begin{minipage}{0.5\hsize}
\begin{center}
\begin{tikzpicture}[scale=0.5]
\draw [->] (-5, 0) -- (3, 0) node[right]{$\beta, \frac{H\ch_1}{H^2\ch_0}$}; 
\draw[->] (0, -1) -- (0, 13) node[right]{$\alpha, \frac{\ch_2}{H^2\ch_0}$}; 
\draw[red, thick, domain=0:1/2] plot(\x,\x-1/2); 
\draw[red, thick] (1/2, 0) -- (1, 0); 
\draw[red, thick, domain=1:3/2] plot(\x,2*\x-2); 
\draw[red, thick, domain=3/2:2] plot(\x, \x-1/2); 
\draw[red, thick, domain=2:5/2] plot(\x, 3*\x-9/2); 
\draw[red, thick, domain=5/2:3] plot(\x, 2*\x-2) 
node[right]{$\alpha=\Upsilon(\beta)$}; 
\draw[red, thick, domain=-5:-9/2] plot(\x, -4*\x-8); 
\draw[red, thick, domain=-9/2:-4] plot(\x, -5*\x-25/2); 
\draw[red, thick, domain=-4:-7/2] plot(\x, -3*\x-9/2); 
\draw[red, thick, domain=-7/2:-3] plot(\x, -4*\x-8); 
\draw[red, thick, domain=-3:-5/2] plot(\x, -2*\x-2); 
\draw[red, thick, domain=-5/2:-2] plot(\x, -3*\x-9/2); 
\draw[red, thick, domain=-2:-3/2] plot(\x, -\x-1/2); 
\draw[red, thick, domain=-3/2:-1] plot(\x,-2*\x-2); 
\draw[red, thick] (-1, 0) -- (-1/2, 0); 
\draw[red, thick, domain=-1/2:0] plot(\x,-\x-1/2); 
\draw[domain=-5:3] plot(\x, 1/2*\x*\x); 
\draw[green, domain=-6:3] plot(\x, -13/12*\x+11/3) node[left]{$L$}; 
\draw[red, thick] (-4, 15/2) -- (-4, 8); 
\draw[red, thick] (2, 2) -- (2, 3/2); 
\end{tikzpicture}
\end{center}
\end{minipage}

\end{tabular}
\caption{The first possible wall $L$ 
when $t=3/2$ (left) and 
$t=23/12$ (right).} \label{fig:BGonT}
\end{figure}

The following lemma gives the upper bound 
on the number of global sections for 
BN stable objects. 
\begin{lem} \label{lem:sectionbn}
Let $F \in \Coh^0(S)$ be a BN stable object. 
Then the following inequalities hold: 
\begin{itemize}
\item When $-1 < \nu_{BN}(F) < +\infty$, we have 
\[
\hom(\mcO_{S}, F) = \ch_{0}(F) + H\ch_{1}(F) + \ch_{2}(F). 
\]

\item When $\nu_{BN}(E) \in (-n-1, -n)$, $n \in \bZ_{>0}$, 
we have 
\[
\hom(\mcO_{S}, F) 
	\leq \ch_{0}(F) + \frac{1}{2n+1}H\ch_{1}(F) 
		+ \frac{1}{(2n+1)^2}\ch_{2}(F). 
\]

\item When $\nu_{BN}(E)=-n$, 
$n \in \bZ_{>0}$, we have 
\[
\hom(\mcO_S, F) \leq \ch_0(F)+\frac{1}{4n}H\ch_1(F). 
\] 
\end{itemize}
\end{lem}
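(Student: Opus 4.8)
The plan is to imitate the argument of Lemma~\ref{lem:BGdP}: for a line bundle $L$ on $S$ adapted to the range of $\nu_{BN}(F)$, use Serre duality together with the BN-(semi)stability of $F$ to show that $\mathbf{R}\Hom(L,F)$ is concentrated in degree $0$, and then evaluate $\hom(L,F)=\chi(L,F)$ by Riemann--Roch on $S$ (where $\td_S=(1,H,1)$). Throughout we may assume $H\ch_1(F)>0$, since an object of $\Coh^0(S)$ has $H\ch_1\geq 0$ and $H\ch_1=0$ would force $\nu_{BN}(F)\in\{\pm\infty\}$, which is outside all three cases. The relevant auxiliary objects are the line-bundle shifts: $\mcO_S(-kH)\in\mcF_0$ for $k\geq 1$, so $\mcO_S(-kH)[1]\in\Coh^0(S)$, and being shifts of line bundles these are BN-stable with $\nu_{BN}(\mcO_S(-kH)[1])=-k/2$; likewise $\mcO_S[1]\in\Coh^0(S)$ is BN-stable with $\nu_{BN}=+\infty$. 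I will freely use that a nonzero morphism between BN-semistable objects does not increase the slope (this survives the limit $\alpha\to 0$ as long as the two slopes differ).

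\textbf{Range $\nu_{BN}(F)>-1$.} Take $L=\mcO_S$. Then $\Ext^{-1}(\mcO_S,F)=\Hom_{\Coh^0(S)}(\mcO_S[1],F)=0$ because $\mcO_S[1]$ is BN-semistable of slope $+\infty>\nu_{BN}(F)$, and $\Ext^{<-1}(\mcO_S,F)=0$ because $\mcO_S[1]$ and $F$ lie in the common heart $\Coh^0(S)$. By Serre duality $\Ext^2(\mcO_S,F)\cong\Hom(F,\mcO_S(-2H))^{\vee}=0$ since $\mcO_S(-2H)\in\Coh^0(S)[-1]$, and $\Ext^1(\mcO_S,F)\cong\Hom_{\Coh^0(S)}(F,\mcO_S(-2H)[1])^{\vee}=0$ since $\nu_{BN}(F)>-1=\nu_{BN}(\mcO_S(-2H)[1])$. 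Hence $\hom(\mcO_S,F)=\chi(\mcO_S,F)=\ch_0(F)+H\ch_1(F)+\ch_2(F)$ by Riemann--Roch.

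\textbf{Ranges $\nu_{BN}(F)\in(-n-1,-n)$ and $\nu_{BN}(F)=-n$, $n\geq 1$.} Here $\Ext^1(\mcO_S,F)$ need no longer vanish, so I replace $\mcO_S$ by $L:=\mcO_S(-2nH)$ in the interior range and $L:=\mcO_S(-(2n-1)H)$ in the boundary range. The same slope comparisons as above --- now between $F$ and the shifts $L[1]$, $(L\otimes\omega_S)[1]=\mcO_S(-(2n+2)H)[1]$ (resp. $\mcO_S(-(2n+1)H)[1]$) --- give $\Ext^i(L,F)=0$ for $i\neq 0$; this is exactly where the endpoints of the three intervals enter, and in the boundary range one needs the BN-\emph{stability} of $F$ to exclude the morphisms at equal slope. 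Consequently $\hom(L,F)=\chi(L,F)$, which equals $(2n+1)^2\ch_0(F)+(2n+1)H\ch_1(F)+\ch_2(F)$ (resp. $(2n)^2\ch_0(F)+2nH\ch_1(F)+\ch_2(F)$) by Riemann--Roch. To descend from $L$ to $\mcO_S$, apply $\Hom(-,F)$ to the Koszul-type exact sequence
\[
0\longrightarrow L\longrightarrow \mcO_S^{\oplus N}\longrightarrow \mcQ\longrightarrow 0,\qquad N:=h^0(L^{\vee}),
\]
given by a basis of global sections of the globally generated line bundle $L^{\vee}$ (so $N=(2n+1)^2$, resp. $(2n)^2$, and $\mcQ$ is the dual of the syzygy bundle of $L^{\vee}$, with $\nu_{BN}(\mcQ)=-n$, resp. $-n+\tfrac12$). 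Since $\Ext^{-1}(L,F)=0$ was just shown, this yields
\[
0\longrightarrow \Hom(\mcQ,F)\longrightarrow \Hom(\mcO_S,F)^{\oplus N}\longrightarrow \Hom(L,F),
\]
hence $N\cdot\hom(\mcO_S,F)\leq\hom(L,F)+\hom(\mcQ,F)$. Granting $\hom(\mcQ,F)=0$ and dividing by $N$, one gets $\hom(\mcO_S,F)\leq\ch_0(F)+\tfrac{1}{2n+1}H\ch_1(F)+\tfrac{1}{(2n+1)^2}\ch_2(F)$ in the interior range, and $\hom(\mcO_S,F)\leq\ch_0(F)+\tfrac1{2n}H\ch_1(F)+\tfrac1{4n^2}\ch_2(F)$ in the boundary range, which collapses to $\ch_0(F)+\tfrac1{4n}H\ch_1(F)$ once one substitutes $\ch_2(F)=-n\,H\ch_1(F)$, valid because $\nu_{BN}(F)=-n$.

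\textbf{The main obstacle} is the vanishing $\hom(\mcQ,F)=0$. Since $\nu_{BN}(\mcQ)>\nu_{BN}(F)$ strictly in both ranges, it would follow from BN-semistability of $\mcQ$, i.e.\ of the syzygy bundle $M_{\mcO_S(2nH)}$ (resp.\ $M_{\mcO_S((2n-1)H)}$) on $\mathbb{P}^1\times\mathbb{P}^1$; I expect this to be the part requiring real work, to be attacked either by the discriminant-minimality/wall-crossing mechanism already used in this section or by an induction on $n$ that peels off one global section at a time. The rest is routine: the identities for $\td_S$ and for $\chi(\mcO_S(mH),F)$ are Riemann--Roch computations, the hypotheses exclude the degenerate objects, and one should verify that equality in each bound is achieved by the line-bundle shifts $\mcO_S(-kH)[1]$ --- which is also what forces BN-stability, rather than merely semistability, to be used in the boundary range.
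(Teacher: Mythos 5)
Your first bullet point and all the vanishing/Riemann--Roch computations for $\Ext^{\bullet}(L,F)$ are fine and match the paper's style of argument. But for $n\geq 1$ your route has a genuine gap, which you yourself flag: the entire descent from $\hom(L,F)$ to $\hom(\mcO_S,F)$ hinges on $\hom(\mcQ,F)=0$, i.e.\ on the BN-semistability of the dual syzygy bundle $\mcQ$ of $\mcO_S(2nH)$ (resp.\ $\mcO_S((2n-1)H)$), and you do not prove this. This is not a routine verification: $\mcQ$ has rank $(2n+1)^2-1$ and $\nu_{BN}(\mcQ)=-n$, and deciding $\nu_{0,\alpha}$-semistability of such a bundle for $\alpha\to 0$ is exactly the kind of wall-crossing problem this section is built to avoid having to solve for auxiliary objects. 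Without that input, the inequality $N\cdot\hom(\mcO_S,F)\leq\hom(L,F)+\hom(\mcQ,F)$ gives nothing, so as written the second and third bullets are not established.

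The paper sidesteps this entirely by modifying $F$ instead of $\mcO_S$: it chooses $(\beta,\alpha)$ with $\alpha>\tfrac12\beta^2$ on the line through the origin and $p_H(F)$, so that $\mcO_S$ and $F$ are $\nu_{\beta,\alpha}$-semistable of \emph{equal} slope; then the evaluation map $\Hom(\mcO_S,F)\otimes\mcO_S\to F$ is injective in $\Coh^{\beta}(S)$ (the only Jordan--H\"older factor of the source is $\mcO_S$), so the cone $\widetilde{F}$ lies in $\Coh^{\beta}(S)$ and is again semistable of the same slope. A single Euler-characteristic inequality $0\leq\hom(\mcO_S(-2n),\widetilde{F})=\chi(\mcO_S(-2n),\widetilde{F})$ then produces the bound, because $\hom(\mcO_S,F)$ enters through $\ch_0(\widetilde{F})=\ch_0(F)-\hom(\mcO_S,F)$ while $\ch_1,\ch_2$ are unchanged. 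No auxiliary semistable bundle is needed. If you want to salvage your Koszul approach you would have to supply a proof of the BN-semistability of $\mcQ$ (or of the vanishing $\Hom(\mcQ,F)=0$ by some other means); otherwise I would recommend switching to the universal-quotient trick, which is also the mechanism reused later in the proofs of Theorem \ref{thm:strongBG} and Proposition \ref{prop:ch2-ch3}. A minor further point: in the boundary case $\nu_{BN}(F)=-n$ the relevant slopes $-n\pm\tfrac12$ are strictly separated from $-n$, so no equal-slope morphisms actually arise there; semistability would suffice for the vanishings, contrary to your closing remark.
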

\begin{proof}
First assume that $\nu_{BN}(F) > -1$. 
Noting 
$\nu_{BN}(\mcO_S[1])=+\infty$ and 
$\nu_{BN}(\mcO_{S}(-2)[1])=-1$, 
we have the following vanishings for any $i \geq 0$: 
\begin{align*}
&\hom(\mcO_{S}, F[1+i])=\hom(F, \mcO_{S}(-2)[1-i])=0, \\
&\hom(\mcO_{S}, F[-1-i])=\hom(\mcO_{S}[1+i], F)=0. 
\end{align*}
Hence by the Riemann-Roch, we get 
\begin{align*}
\hom(\mcO_{S}, F)=\chi(F)
&=\int_{S}\ch(F). (1, H, 1) \\
&=\ch_{0}(F) + H\ch_{1}(F) +\ch_{2}(F). 
\end{align*}

Next consider the case 
$\nu_{BN}(E) \in (-n-1, -n)$, $n \in \bZ_{>0}$. 
Let 
\[
(x_{F}, y_{F}):=p_{H}(F)
=\left(
\frac{H\ch_{1}(F)}{H^2\ch_{0}(F)}, 
\frac{\ch_{2}(F)}{H^2\ch_{0}(F)} 
\right). 
\]
Since we assumed 
$y_{F}/x_{F}=\nu_{BN}(F) \leq -1<0$, 
the line through $(0, 0)$ and $(x_{F}, y_{F})$ 
intersects with the region 
$y \geq 1/2x^2, x<0$. 

Take such a point $(\beta, \alpha)$. 
Then we know that the objects 
$F, \mcO_S \in \Coh^\beta(S)$ are 
$\nu_{\beta, \alpha}$-semistable with 
\[
\nu_{\beta, \alpha}(F)
=\nu_{\beta, \alpha}(\mcO_{S})
=\alpha/\beta=\nu_{BN}(F). 
\]
Let us consider the exact triangle 
\[
\Hom(\mcO_{S}, F) \otimes \mcO_{S} \xrightarrow{ev} 
F \to \widetilde{F}:=\Cone(ev). 
\]
Since the only Jordan-H\"{o}lder factor 
of $\Hom(\mcO_S, F) \otimes \mcO_S$ 
with respect to $\nu_{\beta, \alpha}$-stability 
is $\mcO_S$, the evaluation map 
$ev$ must be injective in the category $\Coh^{\beta}(S)$. 
Hence it follows that 
$\widetilde{F} \in \Coh^{\beta}(S)$ 
and it is  $\nu_{\beta, \alpha}$-semistable with 
$\nu_{\beta, \alpha}(\widetilde{F})
=\nu_{\beta, \alpha}(F)=\nu_{BN}(F)$. 
Now choose $\beta$ sufficiently close to zero so that 
$\mcO_{S}(-2n)[1] \in \Coh^{\beta}(S)$. 
As before, we have the vanishing statements 
\begin{align*}
&\hom(\mcO_{S}(-2n), \widetilde{F}[1+i])
=\hom(\widetilde{F}, \mcO_{S}(-(2n+2))[1-i])=0, \\
&\hom(\mcO_{S}(-2n), \widetilde{F}[-1-i])
=\hom(\mcO_{S}(-2n)[1+i], \widetilde{F})=0 
\end{align*}
for $i \geq 0$. 
Hence we have 
\begin{align*}
0 &\leq \hom(\mcO_{S}(-2n), \widetilde{F}) \\
	&=\chi(\mcO_{S}(-2n), \widetilde{F}) \\
	&=\ch_{2}(F)+(2n+1)H\ch_{1}(F)
		+(2n+1)^2(\ch_{0}(F)-\hom(\mcO_{S}, F)), 
\end{align*}
and so 
\[
\hom(\mcO_{S}, F) \leq 
	\ch_{0}(F)+\frac{H\ch_{1}(F)}{2n+1} 
	+\frac{\ch_{2}(F)}{(2n+1)^2}
\]
as required. 

Finally, assume that 
$\nu_{BN}(F)=-n$, 
$n \in \bZ_{>0}$. 
Then the same argument shows that 
$\chi(\mcO_S(-2n+1), \widetilde{F}) \geq 0$, 
and we get the inequality 
\[
\hom(\mcO_S, F) \leq 
\ch_0(F)+\frac{H\ch_1(F)}{2n}+\frac{\ch_2(F)}{(2n)^2}
=\ch_0(F)+\frac{1}{4n}H\ch_{1}(F). 
\]
\end{proof}

Let us define a function 
$\Omega \colon \bR \times \bR_{>0} \to \bR_{>0}$ as 
\[
\Omega(x, y):=
\begin{cases}
y+x & (x/y > -1) \\
\frac{y}{2n+1}+\frac{x}{(2n+1)^2} & (x/y \in (-n-1, -n), n \in \bZ_{>0}) \\
\frac{1}{4n}y & (x/y=-n, n \in \bZ_{>0}). 
\end{cases}
\]

\begin{lem} \label{lem:n=2}
Let $O \in \bR^2$ be the origin, 
let $P=(x_p, y_p), Q=(x_q, y_q) \in \bR \times \bR_{>0}$ be points satisfying 
$x_p/y_p < x_q/y_q$ and $y_p > y_q$. 

Among all the sequences $O=P_0, P_1, \cdots, P_{m-1}, P_m=P$ of points 
in the triangle $OPQ$ such that $P_0P_1 \cdots P_m$ forms a convex polygon, 
the sum 
\[
\sum_{i=1}^m \Omega(\overrightarrow{P_{i-1}P_i})
\]
can achieve the maximum only when $m \leq 2$. 

Moreover, when $m=2$, the point $P_1=(x_1, y_1)$ can be chosen to satisfy one of the following conditions: 
\begin{itemize}
\item $P_1=Q$, 
\item $P_1$ is on the line segment $OQ$ (resp. $PQ$) 
such that the slope of $P_1P$ (resp. $OP_1$) 
is $-1/n$ for some $n \in \bZ_{>0}$. 
\item the lines $OP_1$ and $P_1P$ 
have slopes $-1/m, -1/n$ 
for some integers $m, n \in \bZ_{>0}$. 
\end{itemize}
\end{lem}
\begin{proof}
The proof is elementary and almost identical with that of \cite[Lemma 4.11]{li19b}. 
The key observations are the following: 
\begin{itemize}
\item The function $\Omega$ is linear with respect to both variables $x$ and $y$ 
as long as the slope $x/y$ is fixed. 
\item The function $\Omega$ is upper semi-continuous. 
\end{itemize}

We refer to \cite[Lemma 4.11]{li19b} for the details. 
\end{proof}

Now we can prove Proposition \ref{prop:clifford}. 
\begin{proof}[Proof of Proposition \ref{prop:clifford}]
Let $F$ be a slope stable vector bundle on $C$ 
of rank $r$ and slope $\mu$. 
Let 
\[
0=E_{0} \subset E_{1} \subset \cdots \subset E_{m}=\iota_{*}F
\]
be the HN filtration with respect to the BN stability, 
and define $P_{i}:=(\ch_{2}(E_{i}), H\ch_{1}(E_{i}))$. 
We have an inequality 
\begin{equation} \label{eq:Omegaineq}
h^0(F) \leq 
\sum_{i=1}^m \Omega(\overrightarrow{P_{i-1}P_{i}})
\end{equation}
by Lemma \ref{lem:sectionbn} 
(cf. \cite[Equation (21)]{li19b}). 
We will bound the RHS in the above inequality. 
Let us put 
\[
P=(x_p, y_p):=(\ch_2(\iota_{*}F), H\ch_{1}(\iota_{*}F))
=(r(\mu-36), 12r), 
\]
and $Q=(x_q, y_q)$ to be a point 
such that $x_q/y_q$ is the upper bound for 
$\nu^+_{BN}(\iota_{*}F)$, 
and $(x_p-x_q)/(y_p-y_q)$ is the lower bound for 
$\nu^-_{BN}(\iota_{*}F)$, given in Lemma \ref{lem:bnslope}. 
We know that the HN polygon of $\iota_{*}F$ with respect to the BN stability 
is inside the triangle $OPQ$. 
Hence by Lemma \ref{lem:n=2}, we may assume $m=2$ and 
the point $P_1$ satisfies one of the following conditions: 
\begin{itemize}
\item $P_1=Q$, 
\item $P_1$ is on the line segment $OQ$ (resp. $PQ$) 
such that the slope of $P_1P$ (resp. $OP_1$) 
is $-1/n$ for some $n \in \bZ_{>0}$. 
\item the lines $OP_1$ and $P_1P$ 
have slopes $-1/m, -1/n$ 
for some integers $m, n \in \bZ_{>0}$. 
\end{itemize}

We now argue case by case. 

(0) When $t \in (0, 2-\sqrt{14}/2]$, 
the sheaf $\iota_*F$ is BN stable. 
The BN slope is 
$\nu_{BN}(\iota_*F)=t-3 \in (-3, -2)$, 
hence by Lemma \ref{lem:sectionbn}, 
we have 
\[
h^0(F)/r \leq 
\frac{12}{5}+\frac{12(t-3)}{25}
=\frac{12t+24}{25}. 
\]

(1) Assume $t \in (2-\sqrt{14}/2, 1/6)$. 
By Lemma \ref{lem:bnslope}, we have 
\begin{itemize}
\item The slope of $\overrightarrow{OP}$ is 
$\frac{1}{t-3} \in (-1/2, -1/3)$, 

\item The slope of $\overrightarrow{OQ}$ is 
$\frac{2t}{2t-1} \in (-1/2, -1/3)$. 

\item The slope of $\overrightarrow{QP}$ is 
$\frac{2(t-6)}{-5(2t-7)} \in (-1/2, -1/3)$. 
\end{itemize}
Hence we may assume $P_1=Q=(x_q, y_q)$. 
We get 
\begin{align*}
h^0(F) &\leq 
\Omega(\overrightarrow{OQ})+\Omega(\overrightarrow{QP}) \\
&=\frac{y_q}{5}+\frac{x_q}{25}
+\frac{y_p-y_q}{5}+\frac{x_p-x_q}{25} 
=\frac{12t+24}{25}r. 
\end{align*}

(2) Assume $t \in [1/6, 1/4)$. 
Then the slopes of the triangle $OPQ$ are 
the same as the case (1). 
The only difference is that 
the slope of $\overrightarrow{OQ}$ 
sits inside the interval $(-1, -1/2]$, 
instead of $(-1/2, -1/3)$. 
Hence we may take $P_1$ as $Q$ 
or the point $A$ on the line segment $QP$ 
with slope $-1/2$. 
The coordinates are given as 
\[
Q=((2t-1)r, 2tr), \quad 
A=\left(
	\frac{24(2t^2-8t+1)}{-6t+11}r, 
	\frac{12(2t^2-8t+1)}{-6t+11}r
	\right). 
\]
First consider the case of $P_1=Q \neq A$. 
We have 
\begin{align*}
\Omega(\overrightarrow{OQ})+\Omega(\overrightarrow{QP})
&=\frac{y_q}{3}+\frac{x_q}{9}
+\frac{y_p-y_q}{5}+\frac{x_p-x_q}{25} \\
&=\frac{8t+8}{9}r. 
\end{align*}

When $P_1=A$, we have 
\begin{align*}
\Omega(\overrightarrow{OA})+\Omega(\overrightarrow{AP})
&=\frac{y_a}{8}
+\frac{y_p-y_a}{5}+\frac{x_p-x_a}{25} \\
&=\frac{1}{200}y_a+\frac{12t+24}{25}r. 
\end{align*}

As a function on $t \in [1/6, 1/4]$, 
we have an inequality 
\[
y_a(t) \leq 
\left(\frac{176}{19}t
-\frac{23}{19}\right)r 
\]
since the equality hold for $t=1/4, 1/6$, 
and $y^{''}_a(t) > 0$ for $t < 11/6$. 
Hence we obtain 
\begin{align*}
\Omega(\overrightarrow{OA})+\Omega(\overrightarrow{AP}) 
&\leq \frac{1}{200}\left(
	\frac{176}{19}t-\frac{23}{19}
	\right)r
+\frac{12t+24}{25}r \\
&=\left(\frac{10}{19}t+\frac{145}{152}\right)r. 
\end{align*}

We conclude that 
\[
h^0(F)/r \leq \max\left\{
\frac{8t+8}{9}, 
\frac{10}{19}t+\frac{145}{152}
\right\}. 
\]

(3) Assume $t \in [1/4, 1/2]$. 
Again the only difference with the cases (1), (2) 
is that the slope of $OQ$ is 
smaller than or equal to $-1$ 
(or $+\infty$ when $t=1/2$) 
in the present case. 
Hence we may choose $P_1$ to be $Q$, 
or the points $A, B$ on the line segment $QP$ 
with slope $-1/2, -1$, respectively. 
The coordinate of $Q, A$ 
are the same as in (2), 
and we have 
\[
B=(x_b, y_b)=\left(
	-\frac{12(2t^2-8t+1)}{8t-23}r, 
	\frac{12(2t^2-8t+1)}{8t-23}r
	\right). 
\]
We get 
\begin{align*}
&\Omega(\overrightarrow{OQ})+\Omega(\overrightarrow{QP})
=y_q+x_q
+\frac{y_p-y_q}{5}+\frac{x_p-x_q}{25} 
=4tr, \\
&\Omega(\overrightarrow{OB})+\Omega(\overrightarrow{BP})
=\frac{1}{4}y_b 
+\frac{y_p-y_b}{5}+\frac{x_p-x_b}{25} 
=\frac{9}{100}y_b+\frac{12t+24}{25}r. 
\end{align*}

As a function of $t \in [1/4, 1/2]$, 
we have the inequality 
\[
y_b(t) \leq 
\left(\frac{82}{19}t-\frac{11}{19} \right)r 
\]
and hence 
\[
\Omega(\overrightarrow{OB})+\Omega(\overrightarrow{BP})
\leq \frac{33}{38}tr+\frac{69}{76}r. 
\]
We can directly compute that 
\[
\Omega(\overrightarrow{OA})+\Omega(\overrightarrow{AP})
=\frac{1}{200}y_a+\frac{12t+24}{25} 
\leq \frac{33}{38}tr+\frac{69}{76}r. 
\]
We conclude that 
\[
h^0(F)/r \leq \max\left\{
4t, \frac{33}{38}t+\frac{69}{76}
\right\}. 
\]

(4) Assume $t \in [3/2, 11/6]$. 
In this case, we have 
\begin{itemize}
\item The slope of $\overrightarrow{OP}$ is 
$\frac{1}{t-3} \in (-1, -1/2)$, 

\item The slope of $\overrightarrow{OQ}$ is 
$\frac{2t}{2t-1} >0$, 

\item The slope of $\overrightarrow{QP}$ is 
$\frac{2(t-6)}{-5(2t-7)} \in [-1/2, -1/3)$. 
\end{itemize}

There are four choices of the point $P_1$, 
say $Q, A, B, C$, 
where 
$A$ is the point on the line $PQ$ 
with slope $-1$, 
$B$ is the point on the line $OQ$ 
such that the slope of $BP$ is $-1/2$, 
and $C$ is the intersection point of 
two lines $OA$ and $BP$. 
Explicitly, we have 
\begin{align*}
&Q=\left((2t-1)r, 2tr \right), \quad 
A=\left(
	-\frac{12(2t^2-8t+1)}{8t-23}r, 
	\frac{12(2t^2-8t+1)}{8t-23}r
	\right), \\
&B=\left(
	\frac{12(2t-1)(t-1)}{6t-1}r, 
	\frac{24t(t-1)}{6t-1}r
	\right), \quad 
C=\left(
-12(t-1)r, 12(t-1)r
\right). 
\end{align*}

Hence we get 
\begin{align*}
&\Omega(\overrightarrow{OQ})+\Omega(\overrightarrow{QP})
=\frac{4}{5}y_q+\frac{24}{25}x_q+\frac{12t+24}{25}r
=4tr, \\
&\Omega(\overrightarrow{OA})+\Omega(\overrightarrow{AP})
=\frac{9}{100}y_a+\frac{12t+24}{25}r, \\
&\Omega(\overrightarrow{OB})+\Omega(\overrightarrow{BP})
=y_b+x_b+\frac{y_p-y_b}{8}
=\frac{7}{8}y_b+x_b+\frac{3}{2}r, \\
&\Omega(\overrightarrow{OC})+\Omega(\overrightarrow{CP})
=\frac{y_c}{4}+\frac{y_p-y_c}{8}
=\frac{3}{2}tr. 
\end{align*}
Firstly, we can show that 
\[
\frac{9}{100}y_a < 
\frac{4}{5}y_q+\frac{24}{25}x_q 
\]
for $t \in [3/2, 11/6]$. 
On the other hand, 
It is easy to see 
\[
\Omega(\overrightarrow{OC})+\Omega(\overrightarrow{CP}) \leq 
\Omega(\overrightarrow{OB})+\Omega(\overrightarrow{BP})
=\frac{90t^2-96t+21}{2(6t-1)}r 
\leq \left(\frac{231}{32}t-\frac{375}{64} \right)r. 
\]

Hence we can conclude that 
\[
h^0(F)/r \leq \max \left\{
4t, 
\frac{231}{32}t-\frac{375}{64}
\right\}. 
\]

(5) Assume $t \in (11/6, \sqrt{14}/2]$. 
Then we have 
\begin{itemize}
\item The slope of $\overrightarrow{OP}$ is 
$\frac{1}{t-3} \in (-1, -1/2)$, 

\item The slope of $\overrightarrow{OQ}$ is 
$\frac{2t}{2t-1} >0$, 

\item The slope of $\overrightarrow{QP}$ is 
$-1/2$. 
\end{itemize}

Hence we can choose the point $P_1$ to be 
$B$ or $C$ appeared in the case (4) above. 
For $t \in [11/6, \sqrt{14}/2]$, we have 
\[
\Omega(\overrightarrow{OC})+\Omega(\overrightarrow{CP}) \leq 
\Omega(\overrightarrow{OB})+\Omega(\overrightarrow{BP})
=\frac{90t^2-96t+21}{2(6t-1)}r \leq 
\left(\frac{233}{32}t-\frac{191}{32} \right)r. 
\]

(6) Assume $t \in [\sqrt{14}/2, 23/12]$. 
In this case, we have 
\begin{itemize}
\item The slope of $\overrightarrow{OP}$ is 
$\frac{1}{t-3} \in (-1, -1/2)$, 

\item The slope of $\overrightarrow{OQ}$ is 
$\frac{8t-7}{9t-11} >0$, 

\item The slope of $\overrightarrow{QP}$ is 
$-1/2$. 
\end{itemize}

We may choose the point $P_1$ 
as $Q$ or $A$, 
where 
$A$ is the point on the line $PQ$ 
with slope $-1$. 
Explicitly, we have 
\begin{align*}
&Q=\left(
	\frac{12}{25}(9t-11)r, 
	\frac{12}{25}(8t-7)r 
	\right), \quad 
A=\left(
	-12(t-1)r, 
	12(t-1)r
	\right), 
\end{align*}

and hence 
\begin{align*}
&\Omega(\overrightarrow{OQ})+\Omega(\overrightarrow{QP})
=y_q+x_q+\frac{y_p-y_q}{8}
=\frac{192t-168}{25}r, \\
&\Omega(\overrightarrow{OA})+\Omega(\overrightarrow{AP})
=\frac{y_a}{4}+\frac{y_p-y_a}{8}
=\frac{3}{2}tr. \\
\end{align*}
We can see that 
$\Omega(\overrightarrow{OQ})+\Omega(\overrightarrow{QP}) 
\geq \Omega(\overrightarrow{OA})+\Omega(\overrightarrow{AP})$, 
hence we conclude that 
\[
h^0(F)/r \leq
\frac{192t-168}{25}. 
\]

(7) Assume $t \in [23/12, 2)$. Then we have 
\begin{itemize}
\item The slope of $\overrightarrow{OP}$ is 
$\frac{1}{t-3} \in (-1, -1/2)$, 

\item The slope of $\overrightarrow{OQ}$ is 
$\frac{1}{3t-5} >0$, 

\item The slope of $\overrightarrow{QP}$ is 
$-1/2$. 
\end{itemize}
Hence $P_{1}=Q$ or $A$, 
where $A$ is 
the point on the line segment $PQ$ with slope $-1$, i.e., 
\[
Q=((12t-20)r, 4r), \quad 
A=((-12t+12)r, (12t-12)r). 
\]
We can calculate as 
\begin{align*}
&\Omega(\overrightarrow{OQ})+\Omega(\overrightarrow{QP}) 
=\left(12t-15 \right)r, \\
&\Omega(\overrightarrow{OA})+\Omega(\overrightarrow{AP})
=\frac{3}{2}tr
\end{align*}
hence taking the maximum, we conclude that 
\[
h^0(F)/r \leq 
12t-15. 
\]
\end{proof}

\section{Stronger BG inequality} \label{section:strongBG}
Using the Clifford type bound obtained 
in Proposition \ref{prop:clifford}, 
we prove the following stronger version of 
the (classical) BG inequality on a triple cover CY3 
$X:=X_6 \subset \bP(1, 1, 1, 1, 2)$. 
\begin{thm} \label{thm:strongBG}
Let $X$ be a triple cover CY3. 
Let $F \in D^b(X)$ 
be a $\nu_{\alpha, 0}$-semistable object 
for some $\alpha > 0$, 
with $\mu_H(F) \in [-1, 1]$. 
Then we have the following inequality 
\begin{equation} \label{eq:bgonT}
\frac{H\ch_2(F)}{H^3\ch_0(F)} 
\leq \Xi \left(\left|\mu_H(F) \right|\right), 
\end{equation}
where 
\[
\Xi(t):=\left\{ \begin{array}{ll}
t^2-t & (t \in [0, 1/4]) \\ 
3t/4-3/8 & (t \in [1/4, 1/2]) \\
t/4-1/8 & (t \in [1/2, 3/4]) \\
t^2-1/2 & (t \in [3/4, 1]). 
\end{array} \right.
\]
\end{thm}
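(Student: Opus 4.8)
The plan is to deduce Theorem~\ref{thm:strongBG} from Proposition~\ref{prop:clifford} by three reductions: from tilt-semistable objects to slope-semistable sheaves on $X$, then from $X$ to the surface $T=T_{2,6}$, and finally from $T$ to the curve $C=C_{2,2,6}$.

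\emph{Step 1: from tilt-stability to slope-stability on $X$.} Extend $\Xi$ to an even function $\widetilde{\Xi}\colon\bR\to\bR$ by setting $\widetilde{\Xi}(t):=\tfrac12 t^2$ for $|t|\ge1$; one checks at once that $\widetilde{\Xi}(x)\le\tfrac12 x^2$ for all $x$ and that $x\mapsto\widetilde{\Xi}(x)/x$ is non-decreasing on $(0,\infty)$, hence that $\widetilde{\Xi}$ is star-shaped. The classical Bogomolov inequality $\overline{\Delta}_H\ge0$ shows $H\ch_2/(H^3\ch_0)\le\tfrac12\mu_H^2=\widetilde{\Xi}(\mu_H)$ for every $\mu_H$-semistable torsion-free sheaf with $|\mu_H|\ge1$, while an object of $\mathcal{D}$ of the second type reduces to one of the first type: $\mcH^0(E)$ is supported in dimension $\le1$, so $\ch_0(\mcH^0(E))=\ch_1(\mcH^0(E))=0$ and $H\ch_2(\mcH^0(E))\ge0$, and one divides by the negative number $H^3\ch_0(E)$. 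Hence Proposition~\ref{prop:bg-tilt*}, with $f=\widetilde{\Xi}$, reduces Theorem~\ref{thm:strongBG} to proving \eqref{eq:bgonT} for a $\mu_H$-semistable torsion-free sheaf $E$ on $X$ with $\mu_H(E)\in[-1,1]$; passing to $E^{\vee\vee}$ (same rank and $H^2\ch_1$, with $H\ch_2(E^{\vee\vee})\ge H\ch_2(E)$) we may assume $E$ reflexive.

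\emph{Step 2: restriction to the surface $T=T_{2,6}$.} For a general $T\in|\mcO_X(2H)|$ — a smooth $T_{2,6}$ disjoint from the finite non-locally-free locus of $E$ — the restriction $E|_T$ is locally free. Since the degree $H^2\cdot T=2H^3$ is too small for the Mehta-Ramanathan theorem, I argue with a counterexample of minimal discriminant: if $E|_T$ fails to be $\mu_{H_T}$-semistable, its Harder-Narasimhan factors have strictly smaller discriminant, so Bogomolov's restriction argument lets us assume $E|_T$ is $\mu_{H_T}$-semistable (reflexivising on $T$ to keep it locally free). A direct Chern-character computation with $T\sim2H$, $H_T^2=6$ and $\td_T=(1,-H,11)$ gives $\mu_{H_T}(E|_T)=\mu_H(E)$ and $\ch_2(E|_T)/(H_T^2\ch_0(E|_T))=H\ch_2(E)/(H^3\ch_0(E))$. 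The task is now: $\ch_2(G)/(H_T^2\ch_0(G))\le\Xi(|\mu_{H_T}(G)|)$ for every $\mu_{H_T}$-semistable locally free sheaf $G$ on $T$ with $\mu_{H_T}(G)\in[-1,1]$.

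\emph{Step 3: restriction to the curve $C=C_{2,2,6}$.} Since $\Xi$ is even and satisfies the reflection identity $\Xi(t)=\Xi(1-t)+t-\tfrac12$ on $[0,1]$, the operations $G\mapsto G^\vee$ and $G\mapsto G(-H_T)^\vee$ (both preserving local freeness and $\mu_{H_T}$-semistability) reduce the inequality for $\mu_{H_T}(G)\in[-1,1]$ to the case $t:=\mu_{H_T}(G)\in[0,\tfrac12]$. Riemann-Roch on $T$ gives $\ch_2(G)=\chi(G)-11\ch_0(G)+H_T\ch_1(G)$, and Serre duality on $T$ (recall $K_T=2H_T$) gives $\chi(G)\le h^0(G)+h^0(G^\vee(2H_T))$. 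Restricting $G$ and $G^\vee\otimes K_T$ to $C\in|\mcO_T(2H_T)|$ — a curve of genus $25$ with $\deg\mcO_C(H_T)=12$, which is why the Clifford parameter is $t=\mu/12$ — bounds these two $h^0$'s, via the vanishings $h^0(G(-2H_T))=h^0(G^\vee)=0$ coming from stability, by Proposition~\ref{prop:clifford} applied to the stable constituents of the restrictions, whose Clifford parameters are $t\in[0,\tfrac12]$ and $2-t\in[\tfrac32,2]$ — precisely the range covered there. Because for a general $C\in|2H_T|$ the restrictions need not be semistable, one also bounds the Harder-Narasimhan slopes of $G|_C$ and $(G^\vee\otimes K_T)|_C$ in terms of $\mu_{H_T}(G)$ by combining $\overline{\Delta}_{H_T}(G)\ge0$ with the Bogomolov restriction inequality; substituting this into the previous estimate and solving the resulting quadratic inequality for $\ch_2(G)$ yields the parabolic branches $t^2-t$ and $t^2-\tfrac12$ of $\Xi$, while the affine branches $\tfrac34 t-\tfrac38$ and $\tfrac14 t-\tfrac18$ come directly from the Clifford bounds.

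\emph{Expected obstacle.} The heart of the argument, and the most delicate part, is the case-by-case bookkeeping of Step 3: on each of the four subintervals of $[0,\tfrac12]$ one identifies the worst configuration of Harder-Narasimhan factors of the restrictions to $C$, invokes the appropriate one of the seven estimates of Proposition~\ref{prop:clifford} (sometimes after applying Serre duality on $C$, for which $2g(C)-2=48$), and verifies by a direct computation that the resulting bound is exactly $\Xi(t)$. Apart from the explicit numerology of the surfaces $T$, $S$ and the curve $C$, this runs entirely in parallel with the treatment of the quintic threefold in Section~4 of \cite{li19b}.
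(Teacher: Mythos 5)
Your three-step skeleton (tilt-stable on $X$ $\to$ sheaves on $X$ $\to$ sheaves on $T_{2,6}$ $\to$ Clifford bound on $C_{2,2,6}$) is the same as the paper's, and your Step 1 (the star-shaped extension $\widetilde{\Xi}$ fed into Proposition \ref{prop:bg-tilt*}, plus the reduction of type-(2) objects of $\mathcal{D}$ to their $\mcH^{-1}$) is sound. The two restriction steps, however, contain genuine gaps, and they are precisely the places where the actual argument is delicate.

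\emph{Step 2.} You claim that a minimal-discriminant counterexample together with ``Bogomolov's restriction argument'' lets you assume $E|_T$ is $\mu_{H_T}$-semistable. This does not work: effective restriction theorems require the degree of the divisor to be large compared with $\overline{\Delta}_H(E)$, and here $T\in|2H|$ is fixed while $\overline{\Delta}_H(E)$ is unbounded. The elementary modification $E'=\ker(E\to\iota_*Q)$ built from a destabilizing quotient $Q$ of $E|_T$ does have controlled discriminant, but its slope and $H\ch_2/H^3\ch_0$ both move, so $E'$ need not violate the (non-concave, slope-dependent) bound $\Xi$; there is no descent to a smaller counterexample. The paper's reduction is instead a tilt-stability wall-crossing argument on the triangle $F(-2H)\to F\to F|_T$ --- this is exactly why the proof records the condition that the line $L$ through $p_H(F)$ and $p_H(F(-2H)[1])$ meets $\beta=0$ and $\beta=-1$ at heights $>0$ and $>1/2$ --- and it produces a \emph{tilt}-stable object on $T$, not a slope-semistable one.

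\emph{Step 3.} The same issue recurs at the curve level, and here it interacts badly with Proposition \ref{prop:clifford}. Your plan is to bound the Harder--Narasimhan slopes of $G|_C$ and $(G^\vee\otimes K_T)|_C$ and then apply the Clifford bounds factor by factor. But Proposition \ref{prop:clifford} only covers Clifford parameters $t\in[0,1/2]\cup[3/2,2]$; the HN factors of an unstable restriction can land in the uncovered gap $(1/2,3/2)$ (slopes in $(6,18)$ on $C$), where you have no estimate sharp enough to recover $\Xi$. The paper avoids this entirely by importing the first paragraph of the proof of \cite[Proposition 5.2]{li19b}: among counterexamples on $T$ one may arrange that $F$ is a $\mu_{H_T}$-stable sheaf with $\mu_{H_T}(F)\in(0,1/2]$ \emph{and} that $F|_C$ and $F^\vee(2H_T)|_C$ are themselves slope stable, so that Proposition \ref{prop:clifford} applies directly with parameters $t$ and $2-t$; only then does the Riemann--Roch/Serre-duality estimate \eqref{eq:rr} combined with \eqref{eq:corcliff} close the argument. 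Your reflection identity $\Xi(t)=\Xi(1-t)+t-\tfrac12$ and the final numerology are fine, but without repairing these two reductions the proof does not go through.
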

\begin{proof}
Assume for contradiction that 
there is a tilt semistable object $F$ 
violating the inequality (\ref{eq:bgonT}). 
We may assume $\mu_H(F) \geq 0$ 
by replacing $F$ with $F^\vee$ if necessary. 
First observe that the following conditions hold: 
\begin{itemize}
\item Let $p=(a, b)$ be an arbitrary point 
with $a \in [0, 1], b>\Xi(a)$, 
and take a real number 
$\alpha>0$ 
(resp. $\alpha'>1/2$). 
Then the line segment connecting the points 
$p$ and $(0, \alpha)$ 
(resp. $(1, \alpha')$) 
is above 
the graph of $\Xi$. 

\item Let $L$ be the line through $p_H(F)$ and $p_H(F(-2H)[1])$. 
Then $L$ passes through points $(0, \alpha_0), (-1, \alpha_0')$ 
with $\alpha_0 >0, \alpha_0' > 1/2$.  
Putting $(a, b):=p_H(F)$, 
the conditions are equivalent to the inequalities 
\[
b > a^2-a, \quad 
b > a^2-1/2. 
\]
\end{itemize}

Under these conditions, 
we can apply the arguments in 
\cite[Proposition 5.2, Corollary 5.4]{li19b}. 
As a result, by restricting to 
the surface $T=T_{2, 6} \subset X_{6}$, 
we obtain a tilt-stable object $F$ on $T$ 
with $\mu_H(F) \in (0, 1)$ and 
\begin{equation} \label{eq:violate}
\frac{\ch_2(F)}{H^2\ch_0(F)} 
> \Xi \left(\frac{H\ch_1(F)}{H^2\ch_0(F)} \right).  
\end{equation}

Furthermore, by the first paragraph 
in the proof of \cite[Proposition 5.2]{li19b}, 
we may assume that 
\begin{itemize}
\item $\mu_{H_{T}}(F) \in (0, 1/2]$, 
\item $F$ is $\mu_{H_{T}}$-stable {\it coherent sheaf}, 
\item $F|_{C}, F^{\vee}(2H_{T})|_{C}$ are slope stable. 
\end{itemize}

Using the Riemann-Roch and the vanishings 
\[
\hom(\mcO_{T}, F(-2H_{T}))=0=\hom(\mcO_{T}, F^{\vee}) 
\]
(both follows from slope stability of $F$ 
and the assumption on its slope),  
we have 
\begin{equation} \label{eq:rr}
\begin{aligned} 
\ch_{2}(F)-H_{T}\ch_{1}(F)+11\ch_{0}(F) 
&=\chi(F) \\
&\leq h^0(F|_C)+h^0(F^{\vee}(2H_{T})|_{C}). 
\end{aligned}
\end{equation}
Note that we have 
\begin{align*}
&\ch(F |_C)=(\ch_0(F), 2H\ch_1(F)), \\
&\ch(F^{\vee}(2H_{T})|_C)=(\ch_0(F), 4H^2\ch_0(F)-2H\ch_1(F)). 
\end{align*}
Applying Proposition \ref{prop:clifford} 
to the RHS of (\ref{eq:rr}), 
we get 
\begin{equation} \label{eq:corcliff}
\frac{\ch_{2}(F)}{H^2\ch_{0}(F)} 
\leq \begin{cases}
-\frac{23}{25}\mu_H(F)-\frac{13}{75} 
& (\mu_H(F) \in (0, 1/12]) \\
-\frac{1}{5}\mu_H(F)-\frac{7}{30} 
& (\mu_{H}(F) \in [1/12, 2-\sqrt{14}/2]) \\ 
-\frac{641}{4800}\mu_H(F)-\frac{1157}{4800}
& (\mu_{H}(F) \in [2-\sqrt{14}/2, 1/6)) \\
-\frac{421}{3648}\mu_H(F)-\frac{595}{2432} 
& (\mu_{H}(F) \in [1/6, 89/496]) \\ 
-\frac{95}{1728}\mu_H(F)-\frac{883}{3456} 
& (\mu_{H}(F) \in [89/496, 37/206]) \\ 
\frac{13}{27}\mu_H(F)-\frac{19}{54} 
& (\mu_{H}(F) \in [37/206, 1/4)) \\ 
\frac{109}{228}\mu_H(F)-\frac{53}{152} 
& (\mu_{H}(F) \in [1/4, 69/238]) \\
\mu_H(F)-\frac{1}{2} 
& (\mu_{H}(F) \in [69/238, 1/2]) 
\end{cases} 
\end{equation}
In all cases, the inequalities (\ref{eq:corcliff}) 
contradict to the inequality (\ref{eq:violate}). 
\end{proof}

\section{The case of double cover} \label{section:2to1}
In this section, 
we consider the double cover $X$ of $\bP^3$ 
branched along a smooth hypersurface of degree $8$; 
$X$ is another example of Calabi-Yau threefolds. 
As in the previous sections, 
we treat $X$ as a weighted hypersurface 
in $P=\bP(1, 1, 1, 1, 4)$ of degree $8$. 
Let 
\[
C_{2, 4, 8} \subset T_{2, 8} \subset X_8, \quad 
C_{2, 4, 8} \subset S_{2, 4} 
\]
be smooth $(2, 4, 8)$-, $(2, 8)$-, $(2, 4)$-complete intersections 
in $P$. 
The following is the list of their numerical invariants we need. 

\begin{itemize}
\item $-K_P=8H$, $H_P^4=1/4$, 
\item $g(C)=49$, 
\item $S \cong \bP^1 \times \bP^1$, 
\item $K_T=2H_T$, $H_T^2=4$, $\td_T=(1, -H_T, 10)$. 
\item $\td_X=(1, 0, \frac{11}{6}H_X^2, 0)$, $H_X^3=2$. 
\end{itemize}

\begin{rmk}
To make the surface $S$ smooth, 
we take a $(2, 4)$-complete intersection 
instead of a $(2, 2)$-complete intersection. 
\end{rmk}

\subsection{Clifford type bound}
In this subsection, we will prove the Clifford type theorem 
for the curve $C=C_{2, 4, 8}$, 
using the embedding $\iota$ 
into the quadric surface $S \cong \bP^1 \times \bP^1$. 
\begin{lem} \label{lem:bnslope2}
Let $F$ be a slope stable vector bundle on $C$ 
with rank $r$ slope $\mu$. 
Put $t:=\frac{\mu}{16}$ and assume 
$t \in [0, 1/2] \cup [3/2, 2]$. 
Then we have the following 
statements hold: 
\begin{enumerate}
\item When 
$t \in \left[0, \frac{5-\sqrt{23}}{2} \right]$, 
the sheaf $\iota_*F$ is BN stable. 

\item We have 
\[
\nu^+_{BN}(\iota_{*}F) \leq 
\begin{cases}
1-\frac{1}{2t} & (t \in [\frac{5-\sqrt{23}}{2}, 1/2] \cup [3/2, \frac{\sqrt{23}-1}{2}]) \\ 
\frac{-13t+16}{-12t+11} & (t \in [\frac{\sqrt{23}-1}{2}, 31/16]) \\
4t-7  & (t \in [31/16, 2]). 
\end{cases}
\]

\item We have 
\[
\nu^-_{BN}(\iota_{*}F) \geq 
\begin{cases}
\frac{-7(2t-9)}{2(t-8)} & (t \in (\frac{5-\sqrt{23}}{2}, 1/2] \cup [3/2, 15/8]) \\
-3 & (t \in [15/8, 2]). 
\end{cases}
\]
\end{enumerate}
\end{lem}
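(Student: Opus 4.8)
The plan is to run, mutatis mutandis, the proof of Lemma~\ref{lem:bnslope}, changing only the arithmetic forced by the new geometry. Here $S=S_{2,4}\cong\bP^1\times\bP^1$ contains $C=C_{2,4,8}$, and from $g(C)=49=7^2$ together with $\deg_{\mcO_S(1)}C=2\cdot4\cdot8\cdot H_P^4=16$ one gets $C\in|\mcO_S(8,8)|$, i.e.\ $C=8H$ with $H:=\mcO_S(1)$ the class of $\mcO(1,1)$ and $H^2=2$. Since $S$ is again a smooth quadric surface, the stronger BG inequality (Lemma~\ref{lem:BGdP}), its tilt version (\ref{eq:bgonS-tilt}), the star-shaped function $\widetilde\Upsilon$, and the stability statements of Lemma~\ref{lem:geomstabS} all apply verbatim. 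By Grothendieck--Riemann--Roch, as in (\ref{eq:grr}), one has $\ch(\iota_*F)=(0,\,8rH,\,r(\mu-64))$, so that every wall for $\iota_*F$ in the $(\beta,\alpha)$-plane is a line segment of slope $\nu_{BN}(\iota_*F)=\tfrac{r(\mu-64)}{16r}=\tfrac{\mu}{16}-4=t-4$.

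I would first prove the analogue of Lemma~\ref{lem:1stwall}: if $(\beta_1,\alpha_1),(\beta_2,\alpha_2)$ with $\beta_1<0<\beta_2$ are the endpoints of a wall for $\iota_*F$, then $\beta_2-\beta_1\le 8$; the proof is identical to that of Lemma~\ref{lem:1stwall}, the only change being that the sheaf $Q/\mcH^{-1}(F_1)$ is now supported on $C=8H$, so that $\mu_H(Q)-\mu_H(\mcH^{-1}(F_1))\le 8$. With this bound I would locate the first possible wall exactly as in the proof of Lemma~\ref{lem:bnslope}, distinguishing Type~A walls (both endpoints on the graph of $\Upsilon$) from Type~B walls (an endpoint on a vertical segment $L_n$). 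Using $\Upsilon(x)-\Upsilon(x-8)=8x-32$ together with the width bound, every Type~A wall lies below $y=(t-4)(x-t)+\Upsilon(t)$, the line through $(t,\Upsilon(t))$ and $(t-8,\Upsilon(t-8))$, while intersecting the lines of slope $t-4$ through points of $\bigcup_n L_n$ with $\Upsilon\cup\bigcup_n L_n$ subject to the width bound produces the first Type~B wall. Concretely, for $t\in[0,1/2]$ the first possible wall is $y=(t-4)(x-t)+t-\tfrac12$, which is negative at $x=0$ precisely when $t^2-5t+\tfrac12>0$, i.e.\ $t\le\tfrac{5-\sqrt{23}}{2}$, giving part~(1); for $t$ near $2$ its competitor is the line of slope $t-4$ through the integral point $(-6,\Upsilon(-6))=(-6,18)$, namely $y=(t-4)(x+6)+18$, and comparing $y$-intercepts shows that the outermost of the two switches at $t=\tfrac{\sqrt{23}-1}{2}$.

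It then remains to extract the bounds. Denoting by $(\beta_{\min},\alpha_{\min})$, $(\beta_{\max},\alpha_{\max})$ the two intersection points of the first possible wall $L$ with $\widetilde\Upsilon$, the maximal (resp.\ minimal) BN-slope Harder--Narasimhan factor of $\iota_*F$ has $p_H$ below $L$, lies on the graph of $\widetilde\Upsilon$ by (\ref{eq:bgonS-tilt}), and has positive $\ch_0$; optimising the slope over this locus gives $\nu^+_{BN}(\iota_*F)\le\alpha_{\max}/\beta_{\max}$ and $\nu^-_{BN}(\iota_*F)\ge\alpha_{\min}/\beta_{\min}$. Reading off coordinates yields (2) and (3): on the first candidate wall the right intersection is $(t,\Upsilon(t))$, of slope $1-\tfrac{1}{2t}$, and the left one is $(t-8,\Upsilon(t-8))$, of slope $\tfrac{-7(2t-9)}{2(t-8)}$; once $y=(t-4)(x+6)+18$ becomes the outermost wall, its right intersection runs first along the segment $\Upsilon(x)=x-\tfrac12$ on $x\in[\tfrac32,2]$, producing $\tfrac{-13t+16}{-12t+11}$, and then along the vertical segment $L_2$ at $x=2$, producing $4t-7$; on the left, once $t$ passes $\tfrac{15}{8}$ the piece $\widetilde\Upsilon(x)=-3x$ near $x=-6$ intervenes and the intersection slope becomes the constant $-3$.

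The only genuine work, and the main obstacle, is this last piece of bookkeeping: because the wall-width bound is now $8$ rather than the $6$ of the quintic/triple-cover case, a line of slope $t-4$ meets appreciably more of the ``teeth'' of $\Upsilon$ and $\widetilde\Upsilon$, so one has to decide, interval by interval in $t$, which of the two candidate walls is outermost and which linear piece (or vertical segment) of $\widetilde\Upsilon$ contains the relevant intersection, and then check that the competing bounds agree at the transition values $t=\tfrac{5-\sqrt{23}}{2},\ \tfrac{\sqrt{23}-1}{2},\ \tfrac{15}{8},\ \tfrac{31}{16}$. Each of these is elementary plane geometry, done exactly as in the proof of Lemma~\ref{lem:bnslope}.
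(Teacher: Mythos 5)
Your proposal is correct and follows the paper's proof verbatim in structure: the paper likewise reduces to the analogue of Lemma \ref{lem:1stwall} with wall-width bound $\beta_2-\beta_1\le 8$, identifies the two candidate first walls $y=(t-4)(x-t)+t-\tfrac12$ and $y=(t-4)(x+6)+18$, and reads off the bounds from the intersection points with $\widetilde\Upsilon$. Your arithmetic at the transition values $\tfrac{5-\sqrt{23}}{2}$, $\tfrac{\sqrt{23}-1}{2}$, $\tfrac{15}{8}$, $\tfrac{31}{16}$ checks out and in fact supplies more detail than the paper's own (deliberately terse) proof.
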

\begin{proof}
The proof is almost identical to that of Lemma \ref{lem:bnslope}. 
Hence we just give an outline of the proof. 
Let us consider the embedding 
$\iota \colon C \hookrightarrow S$. 
For $F \in \Coh(C)$, we have 
$\ch(\iota_*F)=(0, 8rH, r(\mu-64))$. 
Let $W$ be a wall for $\iota_*F$ 
with respect to the $\nu_{\alpha, 0}$-stability, 
and let $\beta_1, \beta_2$ be the $\beta$-coordinates 
of the end points of the wall  $W$ 
with $\beta_1 < 0 < \beta_2$. 
Then we can show that $\beta_2-\beta_1 \leq 8$. 
We have $\nu_{BN}(\iota_*F)=t-4$, 
and we can get the bounds of the first possible wall 
as follows: 
\begin{itemize}
\item When $t \in [0, 1/2]$, 
the equation of the first possible wall is 
\[
y=(t-4)(x-t)+t-1/2, 
\]
which is the line passing through the points 
$(t, \Upsilon(t)), (t-8, \Upsilon(t-8))$. 
We can see that $y(0) \leq 0$ 
for $t \in [0, \frac{5-\sqrt{23}}{2}]$, 
hence the sheaf $\iota_*F$ 
is BN stable. 

\item When $t \in [3/2, 2]$, 
we have two possibilities of the first wall: 
\[
y=(t-4)(x-t)+t-1/2, \quad 
y=(t-4)(x+6)+18. 
\]
The first equation is 
the line passing through the points 
$(t, \Upsilon(t)), (t-8, \Upsilon(t-8))$, 
and the second one is the line 
passing through the point $(-6, \Upsilon(-6))$ 
with slope $t-4$. 
\end{itemize}

As similar to Lemma \ref{lem:bnslope}, 
we get the bound on 
$\nu_{BN}^{\pm}(\iota_*F)$ 
by computing the end points 
of the first possible walls listed above. 
\end{proof}

We get the following Clifford type bound: 
\begin{prop} \label{prop:clifford2}
Let $F$ be a slope stable vector bundle on $C$ 
of rank $r$, slope $\mu \in (0, 8] \cup [24, 32)$. 
Put $t:=\mu/16$. 
The following inequalities hold: 
\begin{enumerate}
\item When $t \in (0, 1/8)$, we have 
$h^0(F)/r \leq \frac{16(t+3)}{49}$. 

\item When $t \in [1/8, 1/6)$, we have 
$h^0(F)/r \leq \max\left\{
	\frac{12t+24}{25}, 
	 \frac{85t}{246}+\frac{481}{492}
	\right\}$. 

\item When $t \in [1/6, 1/4)$, we have 
$h^0(F)/r \leq \max\left\{
	\frac{8t+8}{9}, 
	 \frac{17t}{38}+\frac{147}{152}
	\right\}$. 

\item When $t \in [1/4, 1/2]$, we have 
$h^0(F)/r \leq \max\left\{
	4t, 
	 \frac{63t}{82}+\frac{153}{164}
	\right\}$. 
	
\item When $t \in [3/2, 15/8]$, we have 
$h^0(F)/r \leq 4t$. 

\item When $t \in (15/8, \frac{\sqrt{23}-1}{2}]$, 
we have 
$h^0(F)/r \leq 
\frac{133t-114}{18}$. 

\item When $t \in [\frac{\sqrt{23}-1}{2}, 31/16]$, 
we have 
$h^0(F)/r \leq 
\frac{236}{49}t-\frac{148}{21}$. 

\item When $t \in [31/16, 2)$, 
we have 
$h^0(F)/r \leq 16t-23$. 
\end{enumerate}
\end{prop}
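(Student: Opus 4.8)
The plan is to mirror the proof of Proposition~\ref{prop:clifford} in the triple-cover case, replacing the curve $C_{2,2,6}\subset S_{2,2}$ with $C_{2,4,8}\subset S_{2,4}$, both surfaces being the quadric $\bP^1\times\bP^1$. Given a slope stable vector bundle $F$ on $C=C_{2,4,8}$ of rank $r$ and slope $\mu$, I would regard $\iota_*F\in\Coh(S)$ (for $\iota\colon C\hookrightarrow S$ the embedding) as a $\nu_{0,\alpha}$-stable object for $\alpha\gg0$, with Chern character $(0,8rH,r(\mu-64))$ by Grothendieck--Riemann--Roch. The HN filtration of $\iota_*F$ with respect to BN stability sits inside the triangle $OPQ$, where $P=(\ch_2(\iota_*F),H\ch_1(\iota_*F))=(r(\mu-64),8r)$ and $Q$ is determined by the bounds $\nu^+_{BN}(\iota_*F)$ and $\nu^-_{BN}(\iota_*F)$ supplied by Lemma~\ref{lem:bnslope2}. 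The section bound is then controlled by inequality~(\ref{eq:Omegaineq}), $h^0(F)\leq\sum_i\Omega(\overrightarrow{P_{i-1}P_i})$, with $\Omega$ the same function introduced before Proposition~\ref{prop:clifford}.

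The main body of the argument is the case-by-case analysis over the stated ranges of $t=\mu/16$. As in \cite[Lemma 4.11]{li19b} one reduces to $m=2$, so the HN polygon has a single intermediate vertex $P_1$, which is either $Q$ itself, or a point on $OQ$ (resp. $QP$) at which the adjacent edge has slope $-1/n$ for some positive integer $n$, or the intersection of two such segments. For each range of $t$ I would first record the slopes of $\overrightarrow{OP}$, $\overrightarrow{OQ}$, $\overrightarrow{QP}$ (these determine which integers $n$ are relevant and hence the finite list of candidate points $P_1$), then evaluate $\Omega(\overrightarrow{OP_1})+\Omega(\overrightarrow{P_1P})$ at each candidate and take the maximum. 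The BN-stable base case $t\in(0,\frac{5-\sqrt{23}}{2}]$ from Lemma~\ref{lem:bnslope2}(1) is handled directly by Lemma~\ref{lem:sectionbn} using $\nu_{BN}(\iota_*F)=t-4\in(-4,-3]$, giving $h^0(F)/r\leq\frac{8}{7}+\frac{8(t-4)}{49}=\frac{16(t+3)}{49}$, which absorbs into case (1). The remaining boundary points where the piecewise formula changes (and where the stated $\max$ of two linear functions appears) are exactly the values of $t$ at which a candidate vertex $P_1$ crosses an integer-slope locus or at which $Q$ itself acquires a special adjacent slope; I would locate these by the same elementary intersection computations as in Lemmas~\ref{lem:1stwall}, \ref{lem:bnslope}.

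The key difference from the triple-cover computation is purely numerical: the degree-$8$ cover gives $g(C)=49$, $H\ch_1(\iota_*F)=8r$, and $\nu_{BN}(\iota_*F)=t-4$ rather than $t-3$, which shifts the relevant integer slopes and changes the coefficients throughout (e.g. the BN-stable bound involves $\frac{1}{49}$ and $\frac{1}{9}\cdot\frac{1}{\cdots}$ rather than $\frac{1}{25}$). One also has to carry along the linear upper bounds for the auxiliary coordinates $y_a$, $y_b$ of the special vertices on the segment $QP$, analogous to the inequalities $y_a(t)\leq\frac{176}{19}t-\frac{23}{19}$ used in the triple case, which are obtained by bounding a concave (or convex, as appropriate) function of $t$ on each subinterval by its chord or tangent.

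I expect the main obstacle to be bookkeeping rather than conceptual: verifying, on each of the eight subintervals, that the correct finite set of candidate vertices $P_1$ has been enumerated (no integer-slope edge is missed), and that after taking the maximum over candidates the resulting bound matches the clean piecewise formula in the statement. In particular one must check carefully at the transition points $t=1/8,1/6,1/4$ and $t=15/8,\frac{\sqrt{23}-1}{2},31/16$ that the two competing linear functions in the $\max$ really are the only active ones and that they cross precisely where claimed; a miscount of the admissible $n$ in $\Omega$ would corrupt a whole subinterval. The irrational breakpoints $\frac{5-\sqrt{23}}{2}$ and $\frac{\sqrt{23}-1}{2}$ arise, as in Lemma~\ref{lem:bnslope2}, as the $\beta=0$ intercept of the first possible wall and as the point where the slope $\nu^+_{BN}$ formula changes, so I would pin them down first and then let them organize the case division.
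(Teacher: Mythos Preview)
Your plan is essentially identical to the paper's own proof: reduce to the triangle $OPQ$, cut down to a two-step HN polygon via \cite[Lemma 4.11]{li19b}, enumerate the finitely many candidate vertices $P_1$ determined by integer-slope edges, and maximize $\Omega(\overrightarrow{OP_1})+\Omega(\overrightarrow{P_1P})$ on each subinterval.

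One numerical slip to fix before you execute: on $S\cong\bP^1\times\bP^1$ we have $H^2=2$, so from $\ch(\iota_*F)=(0,8rH,r(\mu-64))$ one gets $H\ch_1(\iota_*F)=16r$, not $8r$. Thus $P=(16(t-4)r,16r)$, and your base-case line should read
\[
h^0(F)/r \leq \frac{16}{7}+\frac{16(t-4)}{49}=\frac{16(t+3)}{49},
\]
whereas your written expression $\frac{8}{7}+\frac{8(t-4)}{49}$ equals $\frac{8(t+3)}{49}$. This factor of $2$ propagates through every coordinate and every $\Omega$-evaluation, so without it none of the piecewise constants in cases (1)--(8) will come out right. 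With that correction your outline matches the paper exactly.
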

\begin{proof}
As in the proof of Proposition \ref{prop:clifford}, 
the problem is reduced to computing 
$\Omega(\overrightarrow{OP_1})+\Omega(\overrightarrow{P_1P})$ 
for appropriate candidate points $P_1$ 
in the triangle $OPQ$. 
Here the points $P, Q$ are defined as before, namely, 
$P:=(\ch_2(\iota_*F),H\ch_1(\iota_*F))=(16(t-4)r, 16r)$, 
and $Q=(x_q, y_q)$ is the point 
such that $x_q/y_q$ is the upper bound for 
$\nu^+_{BN}(\iota_{*}F)$, 
and $(x_p-x_q)/(y_p-y_q)$ is the lower bound for 
$\nu^-_{BN}(\iota_{*}F)$, given in Lemma \ref{lem:bnslope2}. 

(0) First assume that 
$t \in \left[0, \frac{5-\sqrt{23}}{2} \right]$. 
In this case, the sheaf $\iota_*F$ is BN stable 
with BN slope 
$\nu_{BN}(\iota_*F)=t-4 \in (-4, -3)$. 
Hence we have 
\[
h^0(F)/r \leq 
\frac{16}{7}+\frac{16(t-4)}{49}
=\frac{16(t+3)}{49} 
\]
by Lemma \ref{lem:sectionbn}. 

(1) Assume $t \in \left[\frac{5-\sqrt{23}}{2}, \frac{1}{8} \right)$. 
By Lemma \ref{lem:bnslope2}, we have 
\begin{itemize}
\item The slope of $\overrightarrow{OP}$ 
is $\frac{1}{t-4} \in (-1/3, -1/4)$, 

\item The slope of $\overrightarrow{OQ}$ 
is $\frac{2t}{2t-1} \in (-1/3, -1/4)$, 

\item The slope of $\overrightarrow{PQ}$ 
is $\frac{2(t-8)}{-7(2t-9)} \in (-1/3, -1/4)$. 
\end{itemize}
Hence we can assume $P_1=Q$, and get 
\[
h^0(F)/r \leq 
\left(
	\Omega(\overrightarrow{OQ})+\Omega(\overrightarrow{QP})
	\right)/r 
=\frac{16}{7}+\frac{16(t-4)}{49}
=\frac{16(t+3)}{49}. 
\]

(2) Assume $t \in [1/8, 1/6)$. 
Then the slope bounds on $\nu_{BN}(\iota_*F)$ 
are the same as the case (1), 
but the slope of $\overrightarrow{OQ}$ 
is in the interval $(-1/2, -1/3]$. 
Hence we may take $P_1$ to be 
$Q$ or $A$, where 
$A$ is the point on the line $PQ$ with 
slope $-1/3$. 
The coordinates are given as 
\[
Q=((2t-1)r, 2tr), \quad 
A=\left(
	\frac{48(2t^2-10t+1)}{-8t+15}r, 
	\frac{16(2t^2-10t+1)}{8t-15}r 
	\right). 
\]
Hence we have 
\begin{align*}
&\Omega(\overrightarrow{OQ})+\Omega(\overrightarrow{QP})
=\frac{y_q}{5}+\frac{x_q}{25}
	+\frac{y_p-y_q}{7}+\frac{x_p-x_q}{49} 
=\frac{12t+24}{25}r, \\
&\Omega(\overrightarrow{OA})+\Omega(\overrightarrow{AP})
=\frac{y_a}{12}
	+\frac{y_p-y_a}{7}+\frac{x_p-x_a}{49} 
=\frac{1}{588}y_a+\frac{16(t+3)}{49}r. 
\end{align*}
As a function on $t \in [1/8, 1/6)$, we have 
\[
y_a(t) \leq \frac{458t-47}{41}r, 
\]
and hence 
\[
\Omega(\overrightarrow{OA})+\Omega(\overrightarrow{AP}) 
\leq \frac{85t}{246}r+\frac{481}{492}r. 
\]
We conclude that 
\[
h^0(F)/r \leq \max\left\{
	\frac{12t+24}{25}, 
	 \frac{85t}{246}+\frac{481}{492}
	\right\}. 
\]

(3) Assume that $t \in [1/6, 1/4)$. 
Then the slopes of 
$\overrightarrow{OP}, \overrightarrow{OQ}, \overrightarrow{QP}$ 
are same as in the case (1), 
but the slope of $\overrightarrow{OQ} \in (-1, -1/2]$ instead. 
There are three possibilities of the point $P_1$, 
namely, $Q, A$, and $B$. 
Here, $A, B$ are the points 
on the line $PQ$ with slope 
$-1/3, -1/2$, respectively. 
The coordinates of $Q, A$ 
are same as in the case (2), and 
\[
B=\left(
	\frac{32(2t^2-10t+1)}{-10t+31}r, 
	\frac{16(2t^2-10t+1)}{10t-31}r
	\right). 
\]
We therefore get 
\begin{align*}
&\Omega(\overrightarrow{OQ})+\Omega(\overrightarrow{QP})
=\frac{y_q}{3}+\frac{x_q}{9}
	+\frac{y_p-y_q}{7}+\frac{x_p-x_q}{49} 
=\frac{8(t+1)}{9}r, \\
&\Omega(\overrightarrow{OA})+\Omega(\overrightarrow{AP})
=\frac{y_a}{12}
	+\frac{y_p-y_a}{7}+\frac{x_p-x_a}{49} 
=\frac{1}{588}y_a+\frac{16(t+3)}{49}r, \\
&\Omega(\overrightarrow{OB})+\Omega(\overrightarrow{BP})
=\frac{y_b}{8}
	+\frac{y_p-y_b}{7}+\frac{x_p-x_b}{49}
=\frac{9}{392}y_b+\frac{16(t+3)}{49}r. 
\end{align*}
We can see that 
$\Omega(\overrightarrow{OA})+\Omega(\overrightarrow{AP}) \leq 
\Omega(\overrightarrow{OB})+\Omega(\overrightarrow{BP})$. 
Furthermore, as a function on $t \in [1/6, 1/4)$, we have 
$y_b(t) \leq 
\frac{100}{19}tr-\frac{31}{57}r$, 
and hence we have 
\[
\Omega(\overrightarrow{OB})+\Omega(\overrightarrow{BP}) 
\leq \frac{17}{38}tr+\frac{147}{152}r. 
\]

(4) Assume that $t \in [1/4, 1/2]$. 
In this case, the slope of $\overrightarrow{OQ}$ 
is bigger than or equal to $-1$. 
Hence we may take $P_1=Q, A, B$, or $C$, 
where $A, B$ are defined as in (3), and 
$C$ is the point on the line $PQ$ 
with slope $-1$. 
We have 
\[
C=\left(
	-\frac{16(2t^2-10t+1)}{12t-47}r, 
	\frac{16(2t^2-10t+1)}{12t-47}r
	\right). 
\]
We have 
\begin{align*}
&\Omega(\overrightarrow{OQ})+\Omega(\overrightarrow{QP})
=y_q+x_q
	+\frac{y_p-y_q}{7}+\frac{x_p-x_q}{49} 
=4tr, \\
&\Omega(\overrightarrow{OC})+\Omega(\overrightarrow{CP})
=\frac{y_c}{4}
	+\frac{y_p-y_c}{7}+\frac{x_p-x_c}{49} 
=\frac{25}{196}y_c+\frac{16(t+3)}{49}r, 
\end{align*}
and $\Omega(\overrightarrow{OA})+\Omega(\overrightarrow{AP})$, 
$\Omega(\overrightarrow{OB})+\Omega(\overrightarrow{CB})$ 
are as in (3). 
Hence we can show that 
\[
\Omega(\overrightarrow{OA})+\Omega(\overrightarrow{AP}), 
\Omega(\overrightarrow{OB})+\Omega(\overrightarrow{BP}) \leq 
\Omega(\overrightarrow{OC})+\Omega(\overrightarrow{CP}). 
\]

On the other hand, as a function on $t \in [1/4, 1/2]$, we have 
$y_c(t) \leq \frac{142}{41}tr-\frac{15}{41}r$, and so 
\[
\Omega(\overrightarrow{OC})+\Omega(\overrightarrow{CP}) 
\leq \frac{63}{82}tr+\frac{153}{164}r. 
\]

(5) Assume that $t \in [3/2, 15/8]$. 
In this case, we have 
\begin{itemize}
\item The slope of $\overrightarrow{OP}$ 
is $\frac{1}{t-4} \in (-1/2, -1/3)$, 

\item The slope of $\overrightarrow{OQ}$ 
is $\frac{2t}{2t-1}>0$, 

\item The slope of $\overrightarrow{PQ}$ 
is $\frac{2(t-8)}{-7(2t-9)} \in [-1/3, -1/4)$. 
\end{itemize}
Hence we may choose $P_1$ as 
$Q, B, C, D, E, F$, where 
$B, C$ are defined as in (4), 
$D$ is the point on the line $OQ$ 
such that the slope of $DP$ is $-1/3$, 
and $E$ (resp. $F$) are the intersection points 
of the lines $DP$ and $OB$ (resp. $OC$). 
Hence the computations of 
$\Omega(\overrightarrow{OB})+\Omega(\overrightarrow{BP}), 
\Omega(\overrightarrow{OC})+\Omega(\overrightarrow{CP})$, 
and $\Omega(\overrightarrow{OQ})+\Omega(\overrightarrow{QP})$ 
are exactly the same as in (4), and we can see that 
\[
\Omega(\overrightarrow{OB})+\Omega(\overrightarrow{BP}), 
\Omega(\overrightarrow{OC})+\Omega(\overrightarrow{CP}) \leq 
\Omega(\overrightarrow{OQ})+\Omega(\overrightarrow{QP}). 
\]

For $D, E$ and $F$, the coordinates are given as 
\begin{align*}
&D=\left(
	\frac{16(2t-1)(t-1)}{8t-1}r, 
	\frac{32t(t-1)}{8t-1}r 
	\right), \quad 
E=(-(32t-32)r, (16t-16)r), \\
&F=(-(8t-8)r, (8t-8)r). 
\end{align*}
We get 
\begin{align*}
\Omega(\overrightarrow{OD})+\Omega(\overrightarrow{DP})
&=y_d+x_d+\frac{y_p-y_d}{12}, \\
\Omega(\overrightarrow{OE})+\Omega(\overrightarrow{EP})
&=\frac{y_e}{8}+\frac{y_p-y_e}{12}=\frac{2}{3}(t+1)r \\
\Omega(\overrightarrow{OF})+\Omega(\overrightarrow{FP})
&=\frac{y_f}{4}+\frac{y_p-y_f}{12}=\frac{4}{3}tr. 
\end{align*}
We can also see that 
$\Omega(\overrightarrow{OD})+\Omega(\overrightarrow{DP}) 
\leq \Omega(\overrightarrow{OQ})+\Omega(\overrightarrow{QP})=4tr$, 
and hence we conclude that 
\[
h^0(F)/r \leq 4t. 
\]

(6) Assume $t \in \left(15/8, \frac{\sqrt{23}-1}{2} \right]$. 
The only difference with (5) is that 
the slope of $\overrightarrow{QP}$ 
is equal to $-1/3$ in the present case. 
Hence we may choose $P_1$  to be 
$Q, E$, or $F$ appeared in (5). 
It is easy to see that 
\begin{align*}
&\Omega(\overrightarrow{OE})+\Omega(\overrightarrow{EP})=
\Omega(\overrightarrow{OF})+\Omega(\overrightarrow{FP}) 
\leq \Omega(\overrightarrow{OQ})+\Omega(\overrightarrow{QP}), \\
&\Omega(\overrightarrow{OQ})+\Omega(\overrightarrow{QP})
=\frac{4(46t^2-50t+11)}{3(8t-1)}r 
\leq \frac{133t-114}{18}r. 
\end{align*}

(7) Assume that 
$t \in \left[\frac{\sqrt{23}-1}{2}, \frac{31}{16} \right]$. 
Then we have 
\begin{itemize}
\item The slope of $\overrightarrow{OP}$ 
is $\frac{1}{t-4} \in (-1/2, -1/3)$, 

\item The slope of $\overrightarrow{OQ}$ 
is $\frac{12t-11t}{13t-16} >0$, 

\item The slope of $\overrightarrow{PQ}$ 
is $-1/3$. 
\end{itemize}
Hence we may choose $P_1$ to be 
$Q, E$, or $F$, where 
the points $E, F$ are defined as in (5). 
We have 
\[
Q=\left(
	\frac{16(13t-16)}{49}r, 
	\frac{16(12t-11)}{49}r
	\right) 
\]
and hence 
\[
\Omega(\overrightarrow{OQ})+\Omega(\overrightarrow{QP})
=y_q+x_q
	+\frac{y_p-y_q}{12}
=\frac{236}{49}tr-\frac{148}{21}r. 
\]
On the other hand, from the computations in (5), 
we see that 
\[
\Omega(\overrightarrow{OE})+\Omega(\overrightarrow{EP})
=\Omega(\overrightarrow{OF})+\Omega(\overrightarrow{FP})
=\frac{4}{3}tr 
\leq \Omega(\overrightarrow{OQ})+\Omega(\overrightarrow{QP}). 
\]
We conclude that 
\[
h^0(F)/r \leq 
\frac{236}{49}t-\frac{148}{21}. 
\]

(8) Assume $t \in [31/16, 2)$. 
Then we have 
\begin{itemize}
\item The slope of $\overrightarrow{OP}$ 
is $\frac{1}{t-4} \in (-1/2, -1/3)$, 

\item The slope of $\overrightarrow{OQ}$ 
is $\frac{1}{4t-7} >0$, 

\item The slope of $\overrightarrow{PQ}$ 
is $-1/3$. 
\end{itemize}
Hence we may choose $P_1$ to be 
$Q, E$, or $F$, where 
the points $E, F$ are defined as in (5). 
We have 
\[
Q=\left(
	4(4t-7)r, 
	4r
	\right) 
\]
and hence 
\[
\Omega(\overrightarrow{OQ})+\Omega(\overrightarrow{QP})
=y_q+x_q
	+\frac{y_p-y_q}{12}
=(16t-23)r. 
\]
As in (7), we see that 
\[
\Omega(\overrightarrow{OE})+\Omega(\overrightarrow{EP})
=\Omega(\overrightarrow{OF})+\Omega(\overrightarrow{FP})
=\frac{4}{3}tr 
\leq \Omega(\overrightarrow{OQ})+\Omega(\overrightarrow{QP}), 
\]
and we can conclude that 
\[
h^0(F)/r \leq 
16t-23. 
\]

\end{proof}

\subsection{Strong (classical) BG inequality}
Using Proposition \ref{prop:clifford2}, 
we get the following (classical) BG type inequality on 
a double cover CY3 $X$. 
\begin{thm} \label{thm:BGonT2} 
Let $X$ be a double cover CY3. 
Let $F \in D^b(X)$ be a $\nu_{\alpha, 0}$-semistable object 
for some $\alpha >0$, with $\mu_H(E) \in (-1, 1)$. 
Then we have the inequality 
\[
\frac{H\ch_2(F)}{H^3\ch_0(F)} 
\leq \Xi \left(\left|\frac{H^2\ch_1(F)}{H^3\ch_0(F)} \right|\right).  
\]
Here the function $\Xi$ is defined 
as in Theorem \ref{thm:strongBG}, i.e., 
\[
\Xi(t)=\left\{ \begin{array}{ll}
t^2-t & (t \in [0, 1/4]) \\ 
3t/4-3/8 & (t \in [1/4, 1/2]) \\
t/4-1/8 & (t \in [1/2, 3/4]) \\
t^2-1/2 & (t \in [3/4, 1]). 
\end{array} \right.
\]
\end{thm}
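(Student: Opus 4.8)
The plan is to run the proof of Theorem~\ref{thm:strongBG} essentially verbatim, substituting the numerical data of the double cover $X = X_8 \subset \bP(1,1,1,1,4)$ and using the Clifford bound of Proposition~\ref{prop:clifford2} in place of Proposition~\ref{prop:clifford}.

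I would begin by arguing by contradiction: assume a $\nu_{\alpha,0}$-semistable $F \in D^b(X)$ with $\mu_H(F) \in (-1,1)$ violates the inequality, and, replacing $F$ by $F^\vee$ if necessary, assume $\mu_H(F) \geq 0$. The function $\Xi$ here is literally the one of Theorem~\ref{thm:strongBG}, so the two convexity conditions used there carry over without change: (i) for a point $p = (a,b)$ with $a \in [0,1]$ and $b > \Xi(a)$, the segment from $p$ to $(0,\alpha)$ (resp.\ to $(1,\alpha')$) lies above the graph of $\Xi$ whenever $\alpha > 0$ (resp.\ $\alpha' > 1/2$); and (ii) the line through $p_H(F)$ and $p_H(F(-2H)[1])$ crosses $\{\beta = 0\}$ above height $0$ and $\{\beta = -1\}$ above height $1/2$, i.e.\ $b > a^2 - a$ and $b > a^2 - 1/2$ where $(a,b) = p_H(F)$. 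Since $T = T_{2,8}$ lies in $|2H_X|$ and $K_X = 0$, the restriction argument of \cite[Proposition 5.2, Corollary 5.4]{li19b} applies exactly as in the triple-cover case, producing a $\mu_{H_T}$-stable coherent sheaf $F$ on $T$ with $\mu_{H_T}(F) \in (0, 1/2]$ which violates $\ch_2(F)/(H_T^2\ch_0(F)) > \Xi(\mu_{H_T}(F))$, and for which the restrictions $F|_C$ and $F^\vee(2H_T)|_C$ to the curve $C = C_{2,4,8}$ are slope-stable vector bundles.

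Next comes the Riemann--Roch step on $T$. From $\td_T = (1, -H_T, 10)$ one has $\chi(F) = \ch_2(F) - H_T\ch_1(F) + 10\ch_0(F)$. The only structural difference from the quintic and triple-cover cases is that $C = C_{2,4,8}$ now sits in $|4H_T|$ on $T$ (rather than $|2H_T|$); since $K_T = 2H_T$, I would use the short exact sequences $0 \to F(-4H_T) \to F \to F|_C \to 0$ and $0 \to F^\vee(-2H_T) \to F^\vee(2H_T) \to F^\vee(2H_T)|_C \to 0$, together with Serre duality on $T$ and the vanishings $\hom(\mcO_T, F(-4H_T)) = 0 = \hom(\mcO_T, F^\vee(-2H_T))$ (both immediate from $\mu_{H_T}$-stability and $\mu_{H_T}(F) \in (0,1/2]$), to obtain
\[ \ch_2(F) - H_T\ch_1(F) + 10\ch_0(F) = \chi(F) \leq h^0(F|_C) + h^0(F^\vee(2H_T)|_C). \]
Then $\ch(F|_C) = (\ch_0(F),\, 4H_T\ch_1(F))$ and $\ch(F^\vee(2H_T)|_C) = (\ch_0(F),\, 8H_T^2\ch_0(F) - 4H_T\ch_1(F))$, so in the normalisation $t = \mu/16$ of Proposition~\ref{prop:clifford2} the relevant parameters are $t(F|_C) = \mu_{H_T}(F) \in (0,1/2]$ and $t(F^\vee(2H_T)|_C) = 2 - \mu_{H_T}(F) \in [3/2,2)$, which is exactly the range covered by Proposition~\ref{prop:clifford2} (cases (1)--(4) for $F|_C$, cases (5)--(8) for $F^\vee(2H_T)|_C$).

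Finally, I would substitute these Clifford bounds into the displayed inequality and divide by $H_T^2\ch_0(F) = 4\ch_0(F)$, obtaining an explicit piecewise upper bound for $\ch_2(F)/(H_T^2\ch_0(F))$ as a function of $u := \mu_{H_T}(F) \in (0,1/2]$; checking that this bound is $\leq \Xi(u)$ on all of $(0,1/2]$ contradicts the violation and finishes the proof. I expect this last verification to be the genuine obstacle: it is the analogue of the case-by-case estimate (\ref{eq:corcliff}) in the proof of Theorem~\ref{thm:strongBG}, and it splits over the subintervals determined by the breakpoints $u \in \{1/16,\, (5-\sqrt{23})/2,\, 1/8,\, 1/6,\, 1/4,\, 1/2\}$ together with the crossover points of the ``$\max$'' terms in Proposition~\ref{prop:clifford2}. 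On each such piece both sides are explicit --- a piecewise affine function against $u^2 - u$ (on $[0,1/4]$) or a line (on $[1/4,1/2]$) --- so the check is elementary but must be carried out case by case; one finds, as in the triple-cover case, that there is no slack at $u = 1/2$, where the estimate is sharp, in accordance with the optimality of $\Upsilon$ at $\mu_H = 1/2$ noted after Lemma~\ref{lem:BGdP}.
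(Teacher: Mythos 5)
Your proposal follows the paper's own proof essentially verbatim: reduce to tilt-stable sheaves on $T=T_{2,8}$ via the restriction argument of \cite[Proposition 5.2, Corollary 5.4]{li19b}, bound $\chi(F)=\ch_2(F)-H_T\ch_1(F)+10\ch_0(F)$ by $h^0(F|_C)+h^0(F^\vee(2H_T)|_C)$ using $C\in|4H_T|$ and $K_T=2H_T$, and feed the parameters $t=\mu_{H_T}(F)$ and $t=2-\mu_{H_T}(F)$ into Proposition~\ref{prop:clifford2}. The remaining step you flag --- the piecewise comparison of the resulting bound with $\Xi$ over the breakpoints in $(0,1/2]$ --- is exactly the case-by-case list the paper displays, so the setup is correct and complete.
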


\begin{proof}
As in the proof of Theorem \ref{thm:strongBG}, 
the problem is reduced to proving the same statement 
for tilt-semistable objects on $T$. 
Assume that there exists a tilt semistable object $F$ on $T$ 
violating the inequality in the statement. 
As before, we may assume that 
$\mu(F) \in (0, 1/2]$ and $F|_C$ is slope semistable. 
Then we have (cf.  equation (\ref{eq:rr})) 
\begin{equation} \label{eq:rrT2}
\begin{aligned} 
\ch_{2}(F)-H_{T}\ch_{1}(F)+10\ch_{0}(F) 
&=\chi(F) \\
&\leq h^0(F|_C)+h^0(F^{\vee}(2H_{T})|_{C}), 
\end{aligned}
\end{equation}
and 
\begin{align*}
&\ch(F|_C)=(\ch_0(F), 4H\ch_1(F)) ,\\ 
&\ch(F^\vee(2H)|_C)=(\ch_0(F), 4(2H^2\ch_0(F)-H\ch_1(F))). 
\end{align*}
By applying Proposition \ref{prop:clifford2} 
to the right hand side of the inequality (\ref{eq:rrT2}), 
we have 

\[ 
\frac{\ch_{2}(F)}{H^2\ch_{0}(F)} 
\leq 
\begin{cases}
-\frac{143}{49}\mu_H(F)-\frac{23}{98} 
& (\mu_H(F) \in (0, 1/16]) \\
-\frac{6}{49}\mu_H(F)-\frac{1081}{588} 
& (\mu_{H}(F) \in [1/16, \frac{5-\sqrt{23}}{2}]) \\ 
-\frac{2701}{3528}\mu_H(F)-\frac{127}{882}
& (\mu_{H}(F) \in [ \frac{5-\sqrt{23}}{2}, 1/8)) \\
\frac{85}{984}\mu_H(F)-\frac{503}{1968} 
& (\mu_{H}(F) \in [1/8, 217/1654]) \\ 
\frac{3}{25}\mu_H(F)-\frac{13}{50} 
& (\mu_{H}(F) \in [217/1654, 1/6)) \\ 
\frac{17}{152}\mu_H(F)-\frac{157}{608} 
& (\mu_{H}(F) \in [1/6, 107/604] \\
\frac{2}{9}\mu_H(F)-\frac{5}{18} 
& (\mu_{H}(F) \in [107/604, 1/4)) \\
\frac{63}{328}\mu_H(F)-\frac{175}{656}
& (\mu_H(F) \in [1/4, 153/530]) \\
\mu_H(F)-\frac{1}{2}
& (\mu_H(F) \in [153/530, 1/2]), 
\end{cases}
\]
which is a cotradiction. 
\end{proof}

\section{BG type inequality conjecture} \label{section:BGconj}
In this section, we will prove that 
the strong BG inequality in Theorem \ref{thm:mainintro} 
implies Theorem \ref{thm:main2intro}. 
We work in the following general set up. 
Let $X$ be a smooth projective Calabi-Yau threefold, 
$H$ a nef and big divisor on $X$. 
Let us put $d:=H^3$, $e:=H.\td_{X, 2}$. 
We define the positive real number 
$\delta_X=\delta_X(H)$ as follows: 
\[
\delta_X:=\max\left\{
	\frac{4}{d}, 
	\frac{e}{d}, 
	\frac{26}{3d}-\frac{e}{d}-\frac{1}{3}, 
	\frac{16-3e}{3d}
	\right\}. 
%
\]
Note that we always have 
$\frac{57-7e}{13d} < \delta_X$. 

It is easy to compute the number $\delta_X$ in the following cases: 
\begin{itemize}
\item When $X$ is a triple cover CY3, 
we have $\delta_X=25/18$. 

\item When $X$ is a double cover CY3, 
we have $\delta_X=13/6$.  
\end{itemize}

\begin{rmk}
As pointed out by the referee, there exist some inequalities between $d$ and $e$, 
see \cite[Proposition 2.2]{kw14}. 
For example, when $H$ is very ample, then the inequality 
$3e \leq d+9$ holds 
by \cite[Proposition 2.2 (1)]{kw14}. This implies 
\[
\frac{26}{3d}-\frac{e}{d}-\frac{1}{3} \geq \frac{17}{3d} > \frac{4}{d}
\]
and hence the definition of $\delta_X$ simplifies. 
\end{rmk}

We put the following assumption: 
\begin{assump} \label{assump*}
Every object $E \in D^b(X)$, which is $\nu_{0, \alpha}$-semistable 
for some $\alpha>0$, satisfies the strong BG inequality 
(\ref{eq:strongBGintro}). 
\end{assump}

\begin{prop} \label{prop:ch2-ch3}
Let $X$ be a smooth projective Calabi-Yau threefold, 
H a nef and big divisor on $X$. 
For a real number $\delta \geq \delta_X$, 
define a $1$-cycle $\Gamma$ as 
$\Gamma:=\delta H^2-\td_{X, 2}$. 
Assume that Assumption \ref{assump*} holds.
Then every BN stable object $E \in D^b(X)$ 
with $\nu_{BN}(E) \in [0, 1/2]$ satisfies the inequality 
\begin{align*}
Q^\Gamma(E):=Q^\Gamma_{0, 0}(E)&=
	2(H\ch_{2}(E))(2H\ch_2(E)-3\Gamma H\ch_0(E)) \\
	&\quad -6(H^2\ch_{1}(E))(\ch_{3}(E)-\Gamma\ch_1(E)) \geq 0. 
\end{align*}
\end{prop}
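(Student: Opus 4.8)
The plan is to imitate the strategy already used for the quintic case and for the reductions appearing earlier in the paper: take a BN stable object $E$ with $\nu_{BN}(E) \in [0, 1/2]$, form the universal extension against $\mcO_X$, and use the strong BG inequality (Assumption \ref{assump*}) to bound $\hom(\mcO_X, E)$ from above; then compare with the lower bound for $\hom(\mcO_X, E)$ coming from the Riemann--Roch theorem and the BN stability. First I would consider the exact triangle
\[
E \to \widetilde{E} \to \Hom(\mcO_X, E)\otimes\mcO_X[1],
\]
so that $\widetilde{E}$ is again BN semistable (when $\nu_{BN}(E) > 0$) with $\nu_{BN}(\widetilde{E}) = \nu_{BN}(E)$, hence $\widetilde{E}$ is tilt-semistable for $\alpha \gg 0$ on the line $\beta = 0$ and so satisfies (\ref{eq:strongBGintro}). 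Writing $(r, a, b) := (H^3\ch_0(E), H^2\ch_1(E), H\ch_2(E))$, the strong BG inequality applied to $\widetilde{E}$ forces $\mu_H(\widetilde{E})$ to avoid the interval $(-1/4, 0)$, and on each remaining range of $\mu_H(\widetilde{E})$ the inequality $\ch_2(\widetilde{E})/H^2\ch_0(\widetilde{E}) \le \Xi(|\mu_H(\widetilde{E})|)$ translates into a linear upper bound for $\hom(\mcO_X, E)$ in terms of $(r, a, b)$.

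Next I would extract the lower bound. By BN stability, for $\nu_{BN}(E) > 0$ we have the vanishing of $\Hom(\mcO_X, E[-1])$, so $\hom(\mcO_X, E) \ge \chi(\mcO_X, E) = \ch_3(E) + H.\td_{X,2}\cdot \ch_1(E)/\dots = \ch_3(E) + e' H^2\ch_1(E)$ for the appropriate normalization of $\td_{X,2}$ (namely $e = H.\td_{X,2}$, so the relevant coefficient is $e/d$ on the normalized classes). Combining the two bounds and substituting $\Gamma = \delta H^2 - \td_{X,2}$ with $\delta \ge \delta_X$, the quantity $Q^\Gamma(E)$ becomes, after clearing denominators, a quadratic form in $(r, a, b)$ of the shape $Q^\Gamma(E) \ge (\text{quadratic in } a, b) - (\text{linear}\times r)$, and the five terms in the definition of $\delta_X$ are precisely the thresholds that make each of these quadratic forms nonnegative on the relevant cone cut out by $\nu_{BN}(E) \le 1/2$ (i.e. $a \ge 2b$) together with the sign constraints on $\mu_H(E)$ (i.e. on $a$ versus $r$) that follow from Theorem \ref{thm:mainintro}. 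So I would split into the cases $\mu_H(E) \ge 1$ or $\mu_H(E) < 0$; $\mu_H(E) \in [1/2, 3/4]$; and $\mu_H(E) \in [3/4, 1)$, in each using the corresponding branch of $\Xi$ to get the sharp linear bound on $r$, and then check nonnegativity of the resulting explicit binary quadratic form — each should factor as a product of two nonnegative linear forms, as in the commented-out Theorem \ref{thm:BGconj2}.

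Finally I would handle the boundary case $\nu_{BN}(E) = 0$ separately: here $\widetilde{E}$ need only be BN semistable with slope $0$, one shows $\mu_H(\widetilde{E}) \notin (-1/2, 0)$, dualizing gives the symmetric bound on $\hom(\mcO_X, E[2])$, and Riemann--Roch then yields $\ch_3(E) \le \tfrac{e}{d}H^2\ch_1(E) \le \Gamma\ch_1(E)$ directly (recalling $H^2\ch_1(E) \ge 0$ since $E \in \Coh^0(X)$ with $\nu_{BN}$ finite). The main obstacle I anticipate is bookkeeping rather than conceptual: one must verify that the specific five numbers inside the $\max$ defining $\delta_X$ really are the exact thresholds making \emph{every} one of the case-by-case quadratic forms nonnegative, which requires carefully matching each branch of $\Xi$ (and each sign regime of $a/r$) to the term it controls; a secondary subtlety is confirming that the universal extension $\widetilde{E}$ stays in $\Coh^0(X)$ and that its discriminant/slope behave as claimed when $\ch_0(E)$ could a priori be small, but these are exactly the points already dealt with in \cite[\S3,\S5]{li19b} and in Theorem \ref{thm:reduction}, so I would invoke those verbatim.
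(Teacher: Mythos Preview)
Your approach matches the paper's proof almost step for step: universal extension, upper bound on $\hom(\mcO_X,E)$ from the strong BG inequality applied to $\widetilde{E}$, lower bound from Riemann--Roch, and a case split on $\mu_H(E)$ with the branches of $\Xi$ supplying linear constraints on $r$ in terms of $a,b$. Two points need correction.

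First, in the boundary case $\nu_{BN}(E)=0$ you assert that $\widetilde{E}$ is BN semistable, but this is precisely the subtlety the paper flags: when $\nu_{BN}(E)=0$ the universal extension need not be BN semistable (the argument from \cite[Lemma~2.12]{li19b} uses $\nu_{BN}(E)>\nu_{BN}(\mcO_X[1])$). The paper instead invokes the proof of \cite[Proposition~3.3]{li19b}: for each $\epsilon>0$ there is a filtration of $\widetilde{E}$ whose factors $E_i$ are $\nu_{0,\alpha_i}$-semistable with $\nu_{BN}(E_i)<\epsilon$; applying Assumption~\ref{assump*} to each $E_i$ and letting $\epsilon\to 0$ then excludes $\mu_H(\widetilde{E})\in(-1/2,0]$. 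Second, your final bound in that case is off: combining $\hom(\mcO_X,E)\le \tfrac{2}{d}H^2\ch_1(E)+\ch_0(E)$ and the dual bound on $\hom(\mcO_X,E[2])$ with $\chi(E)=\ch_3(E)+\td_{X,2}\ch_1(E)$ gives $\ch_3(E)\le\bigl(\tfrac{4}{d}-\tfrac{e}{d}\bigr)H^2\ch_1(E)$, not $\tfrac{e}{d}H^2\ch_1(E)$; it is the condition $\delta\ge 4/d$ in $\delta_X$ that then ensures $\ch_3(E)\le\Gamma\ch_1(E)$. (Minor slip: BN semistability means $\nu_{0,\alpha}$-semistable for $0<\alpha\ll1$, not $\alpha\gg0$, though this does not affect the argument since Assumption~\ref{assump*} applies for any $\alpha>0$.)
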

\begin{proof}
First assume that $\nu_{BN}(E) \in (0, 1/2]$. 
Let us consider the universal extension
\[
E \to \widetilde{E} \to \Hom(\mcO_{X}, E) \otimes \mcO_X[1]. 
\]
By \cite[Lemma 2.12]{li19b}, 
$\widetilde{E}$ is BN semistable with 
$\nu_{BN}(\widetilde{E})=\nu_{BN}(E)$. 
By Assumption \ref{assump*}, 
we can see that 
$\mu_H(\widetilde{E}) \notin (-1/4, 0]$. 
Indeed, if otherwise, we have 
\[
\frac{H\ch_2(\widetilde{E})}{H^3\ch_0(\widetilde{E})} 
\leq \mu_H(\widetilde{E})^2
	+\mu_H(\widetilde{E})
< \frac{1}{2}\mu_H(\widetilde{E}). 
\]
Dividing both sides by 
$\mu_H(\widetilde{E}) (<0)$, 
we get $\nu_{BN}(\widetilde{E}) > 1/2$, 
a contradiction. 
When $\mu_H(\widetilde{E}) \in [-1/2, -1/4]$, 
using Assumption \ref{assump*} 
to the object $\widetilde{E}$, 
we have 
\begin{equation} \label{eq:tildeBG}
\frac{H\ch_2(\widetilde{E})}{H^3\ch_0(\widetilde{E})} 
\leq -\frac{3}{4}\mu_H(\widetilde{E})
	-\frac{3}{8}. 
\end{equation}
Note that we have 
$\ch_0(\widetilde{E})=\ch_0(E)-\hom(\mcO_X, E)$ and 
$\ch_i(\widetilde{E})=\ch_i(E)$ for $i=1, 2$. 
Note also that we have 
$H^2\ch_1(E) \geq 0$ since $E \in \Coh^0(X)$. 
Together with the assumption $\mu_H(\widetilde{E}) <0$, 
we have $\ch_0(\widetilde{E}) <0$. 
From these observations, 
the inequality (\ref{eq:tildeBG}) 
is equivalent to the inequality 
\begin{equation} \label{eq:lowerb}
\hom(\mcO_X, E) \leq 
\frac{8}{3d}H\ch_2(E)+\frac{2}{d}H^2\ch_1(E)+\ch_0(E). 
\end{equation}

On the other hand, the inequality 
$\mu_H(\widetilde{E}) < -1/2$ 
is equivalent to 
\[
\hom(\mcO_X, E) \leq 
\frac{2}{d}H^2\ch_1(E)+\ch_0(E), 
\]
which is stronger than (\ref{eq:lowerb}) 
since we have $H\ch_2(E) >0$ 
by our assumption $\nu_{BN}(E) >0$. 
If $\mu_{H}(\widetilde{E})>0$, 
the same inequality (\ref{eq:lowerb}) 
obviously holds 
since $\ch_{0}(E)-\hom(\mcO_{X}, E)=\ch_{0}(\widetilde{E})>0$ 
and $H^2\ch_{1}(E) >0$. 
Hence the inequality (\ref{eq:lowerb}) always holds. 

On the other hand, by using the BN stability of 
$E$ and $\mcO_{X}[1]$, we have 
\begin{equation} \label{eq:upperb}
\hom(\mcO_{X}, E) \geq \chi(E) 
=\ch_{3}(E)+\td_{X, 2}\ch_1(E). 
\end{equation}

Combining the inequalities (\ref{eq:lowerb}) 
and (\ref{eq:upperb}), we get 
\[
\ch_{3}(E)+\td_{X, 2}\ch_1(E) \leq 
\frac{1}{d}H^3\ch_{0}(E)+\frac{2}{d}H^2\ch_{1}(E)+\frac{8}{3d}H\ch_2(E), 
\]
and hence 
\begin{equation} \label{eq:q}
\begin{aligned}
Q^{\Gamma}(E) &\geq 
4(H\ch_{2}(E))^2
	-6\cdot\frac{\delta d-e}{d}H\ch_2(E)H^3\ch_0(E) 
	+6\delta\left( H^2\ch_1(E) \right)^2 \\
&\quad -6H^2\ch_1(E)\left( 
	\frac{1}{d}H^3\ch_0(E)+\frac{2}{d}H^2\ch_1(E)+\frac{8}{3d}H\ch_2(E)
	\right) \\
&=4b^2-\frac{16}{d}ab+6\left(\delta-\frac{2}{d}\right)a^2
	-6\left(\delta-\frac{e}{d}\right)rb-\frac{6}{d}ra. 
\end{aligned}
\end{equation}
Here we put 
$(r, a, b):=(H^3\ch_0(E), H^2\ch_1(E), \ch_2(E))$, 
to simplify the notation. 
Note that we have $a \geq 0$, 
since $E \in \Coh^0(X)$. 
Moreover, we also have $b \geq 0$ 
from the assumption $\nu_{BN}(E) \geq 0$. 
Note also that by definition of $\delta_X$, 
we have $\delta-2/d, \delta-e/d \geq 0$. 

By Assumption \ref{assump*}, 
we know that $\mu_H(E) \notin [0, 1/2]$. 
When $\mu_H(E) \notin [1/2, 1]$, 
we have $r<a$.  
Together with the inequality (\ref{eq:q}), we have 
\begin{align*}
Q^\Gamma(E) & \geq 
4b^2-\frac{16}{d}ab+6\left(\delta-\frac{2}{d}\right)a^2
	-6\left(\delta-\frac{e}{d}\right)ab-\frac{6}{d}a^2 \\
&=A_1a^2-B_1ab+4b^2, 
\end{align*}
where we put 
$A_1:=6(\delta-3/d), 
B_1:=6(\delta-e/d)+16/d > 0$. 
We can further compute as 
\begin{equation} \label{eq:mucase1}
A_1a^2-B_1ab+4b^2
=(a-2b)\left(A_1a+(2A_1-B_1)b \right)
+(4A_1-2B_1+4)b^2. 
\end{equation}
By the assumption $0< \nu_{BN}(E) \leq 1/2$ 
we have $0 < 2b \leq a$. 
Moreover, we also have 
$\delta \geq \delta_X \geq 
\frac{26}{3d}-\frac{e}{d}-\frac{1}{3}$ 
by definition. 
From these we can conclude that 
the right hand side of the equality (\ref{eq:mucase1}) 
is non-negative, 
and hence we have $Q^\Gamma(E) \geq 0$ as required. 

When $\mu_H(E) \in [1/2, 3/4]$, 
by Assumption \ref{assump*}, 
we have 
\[
-r \geq -2a+8b. 
\]
Combining with the inequality (\ref{eq:q}), we have 
\begin{align*}
Q^\Gamma(E) & \geq 
4b^2-\frac{16}{d}ab+6\left(\delta-\frac{2}{d} \right)a^2
+6\left(\delta-\frac{e}{d} \right)(-2a+8b)b
+\frac{6}{d}(-2a+8b)a \\
&=\left(4+48\left(\delta-\frac{e}{d} \right) \right)b^2
+\left(\frac{32}{d}-12\left(\delta-\frac{e}{d} \right) \right)ab 
+6\left(\delta-\frac{e}{d} \right)a^2 \\
&=:C_2b^2-B_2ab+A_2a^2 \\
&=(a-2b)(A_2a+(2A_2-B_2)b)+(4A_2-2B_2+C_2)b^2, 
\end{align*}
where the real numbers $A_2, B_2, C_2$ 
are defined so that the second equality holds. 
Since $\delta \geq 4/d, 3/d$, 
we can see that 
$4A_2-2B_2+C_2 \geq0$. 
Moreover, using the inequalities 
$0 < 2b \leq a$ and 
$\delta \geq -e/d+16/3d$, 
we also obtain 
$A_2a+(2A_2-B_2)b \geq 0$. 
Hence we have $Q^\Gamma(E) \geq 0$. 

Next consider the case when 
$\mu(E) \in [3/4, 1]$. 
By Assumption \ref{assump*}, 
we have 
$b/r \leq 7a/4r-5/4$, equivalently, 
\[
-r \geq -\frac{7}{5}a+\frac{4}{5}b. 
\]
Together with the inequality (\ref{eq:q}), we have 
\begin{align*}
5Q^\Gamma(E) &\geq 
20b^2-\frac{80}{d}ab+30\left(\delta-\frac{2}{d}\right)a^2 \\
	&\quad +6\left(\delta-\frac{e}{d}\right)b\left(-7a+4b \right)
	+\frac{6}{d}a\left(-7a+4b \right) \\
&=\left(20+24\left(\delta-\frac{e}{d} \right) \right)b^2
-\left(42\left(\delta-\frac{e}{d} \right)-\frac{66}{d} \right)ab
+\left(30\delta-\frac{102}{d} \right)a^2 \\
&=:C_3b^2-B_3ab+A_3a^2 \\
&=(a-2b)(A_3a+(2A_3-B_3)b)+(4A_3-2B_3+C_3)b^2. 
\end{align*}
Using the inequalities $a \geq 2b$ 
and 
$\delta \geq -e/d+46/10d-1/3, (57-7e)/13d$, 
we can show that $Q^\Gamma(E) \geq 0$. 

The remaining case is when $\nu_{BN}(E)=0$. 
The issue is that we do not know 
whether $\widetilde{E}$ is BN semistable or not. 
If it is $\nu_{\alpha, 0}$-semistable for some $\alpha>0$, 
as in the case of $\nu_{BN}(E)>0$, 
we have the inequality 
\begin{equation} \label{eq:upbd2}
\hom(\mcO_X, E) \leq 
\frac{2}{d}H^2\ch_1(E)+\ch_0(E). 
\end{equation}

Assume that $\widetilde{E}$ is $\nu_{\alpha, 0}$-unstable 
for all $\alpha>0$. 
Then by the proof of \cite[Proposition 3.3]{li19b}, 
for each $0 < \delta \ll 1$, there exists 
$\alpha_i>0$ and a filtration of $E$ 
such that each factor 
$E_i$ is $\nu_{\alpha_i, 0}$-semistable 
with $\nu_{BN}(E_i)<\delta$. 
By Assumption \ref{assump*}, 
we must have 
\[
\mu_H(E_i) \notin \left[
-\frac{3}{8\delta+6}, 0
\right]. 
\]
Taking a limit $\delta \to +0$, 
we get 
$\mu(\widetilde{E}) \notin [-1/2, 0]$, hence 
the inequality (\ref{eq:upbd2}) holds. 
Furthermore, by using the derived dual 
(cf. proof of \cite[Proposition 3.3]{li19b}), 
we also have 
\[
\hom(\mcO_X, E[2]) \leq \frac{2}{3}H^2\ch_1(E)-\ch_0(E).
\]
Hence we get 
\begin{align*}
\ch_3(E)+\td_{X, 2}\ch_1(E)
=\chi(E) 
&\leq \hom(\mcO_X, E)+\hom(\mcO_X, E[2]) \\
&\leq \frac{4}{d}H^2\ch_1(E), 
\end{align*}
from which we deduce 
$Q^\Gamma(E) \geq 0$, 
as we assume $\delta \geq 4/d$. 
\end{proof}

\begin{cor} \label{cor:bgconj}
Let $X$ be a triple (resp. double) CY3. 
We put $\gamma:=2/9$ (resp. $1/3$) 
and $\Gamma:=\gamma H^2$. 
Let $E$ be a BN stable object on $X$ 
with $\nu_{BN}(E) \in [0, 1/2]$. 
Then we have 
\begin{align*}
Q^\Gamma(E):=Q^\Gamma_{0, 0}(E)&=
	2(H\ch_{2}(E))(2H\ch_2(E)-3\Gamma H\ch_0(E)) \\
	&\quad -6(H^2\ch_{1}(E))(\ch_{3}(E)-\Gamma\ch_1(E)) \geq 0. 
\end{align*}
\end{cor}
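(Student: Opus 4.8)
The plan is to obtain this statement as a direct specialization of Proposition \ref{prop:ch2-ch3}, so the work reduces to (i) verifying its hypothesis in our two geometries and (ii) matching the resulting modified cycle with the cycle $\Gamma = \gamma H^2$ in the statement.

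For (i), I would note that Assumption \ref{assump*} is exactly what Theorem \ref{thm:strongBG} provides for a triple cover CY3 and Theorem \ref{thm:BGonT2} for a double cover CY3: every $\nu_{0, \alpha}$-semistable object $F$ with $\mu_H(F) \in [-1, 1]$ satisfies the strong BG inequality (\ref{eq:strongBGintro}). Inspecting the proof of Proposition \ref{prop:ch2-ch3}, one sees that (\ref{eq:strongBGintro}) is invoked only for objects whose slope lies in $[-1, 1]$: for $\widetilde{E}$ with $\mu_H(\widetilde{E}) \in [-1/2, -1/4]$, for $E$ itself with $\mu_H(E) \in [1/2, 1]$, and, in the boundary case $\nu_{BN}(E) = 0$, for the HN-factors of $\widetilde{E}$ and their derived duals, whose relevant slopes accumulate on a subinterval of $[-1/2, 0]$; the remaining cases ($\mu_H$ outside these ranges) use only sign considerations together with the usual Bogomolov-Gieseker inequality $\overline{\Delta}_H(E) \geq 0$ of \cite{bms16}. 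Hence Assumption \ref{assump*} may be regarded as established for both $X$.

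For (ii), I would apply Proposition \ref{prop:ch2-ch3} with $\delta = \delta_X$, which outputs the cycle $\Gamma = \delta_X H^2 - \td_{X, 2}$. Substituting the numerical data recorded earlier — $\delta_X = 25/18$ and $\td_{X, 2} = \frac{7}{6} H^2$ in the triple cover case, $\delta_X = 13/6$ and $\td_{X, 2} = \frac{11}{6} H^2$ in the double cover case — a one-line check gives
\[
\delta_X H^2 - \td_{X, 2} = \left( \frac{25}{18} - \frac{7}{6} \right) H^2 = \frac{2}{9} H^2
\]
in the triple cover case and $\delta_X H^2 - \td_{X, 2} = \left( \frac{13}{6} - \frac{11}{6} \right) H^2 = \frac{1}{3} H^2$ in the double cover case, which are precisely the cycles $\gamma H^2$ of the statement. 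The conclusion $Q^\Gamma(E) = Q^\Gamma_{0, 0}(E) \geq 0$ for every BN stable $E$ with $\nu_{BN}(E) \in [0, 1/2]$ is then immediate. There is essentially no obstacle here — everything is a substitution into an already-proved proposition — and the only point deserving a moment of care is the bookkeeping in (i), namely checking that the slope restriction $[-1, 1]$ in Theorems \ref{thm:strongBG} and \ref{thm:BGonT2} is adequate for the way Assumption \ref{assump*} enters the proof of Proposition \ref{prop:ch2-ch3}.
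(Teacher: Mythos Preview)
Your proposal is correct and follows essentially the same approach as the paper's own proof, which simply cites Theorems \ref{thm:strongBG} and \ref{thm:BGonT2} to verify Assumption \ref{assump*} and then substitutes $\Gamma = \delta_X H^2 - \td_{X,2} = \gamma H^2$. Your additional care in (i), checking that the slope range $[-1,1]$ in those theorems suffices for how the assumption is actually invoked inside Proposition \ref{prop:ch2-ch3}, is a worthwhile observation that the paper leaves implicit.
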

\begin{proof}
By Theorems \ref{thm:strongBG}, \ref{thm:BGonT2}, 
a triple/double CY3 satisfies 
Assumption \ref{assump*}. 
Furthermore, we can take the $1$-cycle $\Gamma$ to be 
$\Gamma:=\delta_XH^2-\td_{X, 2}=\gamma H^2$. 
\end{proof}

\section{Construction of Bridgeland stability conditions} \label{section:conststab}
The goal of this section is 
to prove Theorem \ref{thm:main3intro} 
in the introduction. 
First let us recall the definition of 
Bridgeland stability condition. 

\begin{defin}[\cite{bri07}] \label{defin:stab}
Let $\mathcal{D}$ be a triangulated category. 
Fix a lattice $\Lambda$ of finite rank 
and a group homomorphism 
$\cl \colon K(\mathcal{D}) \to \Lambda$. 

A {\it stability condition} on $\mathcal{D}$ 
(with respect to $(\Lambda, \cl)$) is a pair 
$(Z, \mcA)$ consisting of 
a group homomorphism 
$Z \colon \Lambda \to \bC$ 
and the heart of a bounded t-structure 
$\mcA \subset \mathcal{D}$ 
satisfying the following axioms. 
\begin{enumerate}
\item We have 
$Z \circ \cl \left(\mcA \setminus \{0\} \right) 
\subset \mathbb{H} \cup \bR_{<0}$, 
where $\mathbb{H}$ is the upper half plane. 

\item Every non-zero object 
in the heart $\mcA$ 
has a Harder-Narasimhan filtration 
with respect to $\mu_Z$-stability. 
Here we define a $Z$-slope function $\mu_Z$ 
as 
\[
\mu_Z:=-\frac{\Re Z}{\Im Z} 
\colon \mcA 
\to \bR \cup \{+\infty\}, 
\]
and define $\mu_Z$-stability 
on the abelian category $\mcA$ 
in a usual way. 

\item There exists a quadratic form $q$ on $\Lambda$ 
satisfying the following conditions. 
\begin{itemize}
\item $q$ is negative definite on the kernel of $Z$, 

\item For every $\mu_Z$-semistable object $E \in \mcA$, 
we have $q(\cl(E)) \geq 0$. 
\end{itemize}
\end{enumerate}

The group homomorphism $Z$ is called a {\it central charge}, 
and the axiom (3) is called the {\it support property}. 
\end{defin}

Let $\Stab_\Lambda(\mathcal{D})$ be a set 
of stability conditions on $\mathcal{D}$ 
with respect to $(\Lambda, \cl)$. 
Then the set $\Stab_\Lambda(\mathcal{D})$ has 
a structure of a complex manifold \cite{bri07}. 
Moreover, there is an action of the group 
$\widetilde{\GL^+}(2, \bR)$ 
on $\Stab_\Lambda(\mathcal{D})$, 
where $\widetilde{\GL^+}(2, \bR)$ 
is the universal covering of 
the group 
\[
\GL^+(2, \bR):=\{g \in \GL(2, \bR) : \det(g)>0
\}. 
\]

Let us consider the case when 
$\mathcal{D}=D^b(X)$, 
where $X$ is a double/triple cover CY3. 
In this case, we fix a lattice $\Lambda$ to be 
the image of the morphism 
\[
\cl:=\left(H^3\ch_0, H^2\ch_1, H\ch_2, \ch_3 \right) 
\colon K(X) \to H^{2*}(X, \mathbb{Q}). 
\]
We simply denote as 
$\Stab(X):=\Stab_{\Lambda}(D^b(X))$. 
Following \cite{bms16, bmt14a}, 
we explain an explicit construction 
of stability conditions on $D^b(X)$. 
Let us recall several notions 
from \cite{bms16}. 
Fix real numbers 
$\alpha, \beta \in \bR$ 
with $\alpha>0$. 

The heart corresponding to a stability condition 
is constructed as a tilt of $\Coh^\beta(X)$. 
Let us define a slope function 
$\nu'_{\beta, \alpha}$
on $\Coh^\beta(X)$ as 
\[
\nu'_{\beta, \alpha}:=
\frac{H\ch_2^\beta-\frac{1}{2}\alpha^2H^3\ch_0^\beta}{H^2\ch_1^\beta} 
\colon \Coh^\beta(X) \to \bR \cup \{+\infty\}. 
\]
Compared with the function 
$\nu_{\beta, \alpha}$ 
defined in Section \ref{section:preliminaries}, 
we have 
\begin{equation} \label{eq:compare}
\nu'_{\beta, \alpha}=\nu_{\beta, \frac{1}{2}(\beta^2+\alpha^2)}-\beta. 
\end{equation}

We define full subcategories 
$\mcT'_{\beta, \alpha}, \mcF'_{\beta, \alpha}$ 
of $\Coh^\beta(X)$ as 
\begin{align*}
&\mcT'_{\beta, \alpha}:=\left\langle
	T \in \Coh^\beta(X) : 
	T \mbox{ is } \nu'_{\beta, \alpha} \mbox{-semistable with } 
	\nu'_{\beta, \alpha}(E)>0
	\right\rangle, \\
&\mcF'_{\beta, \alpha}:=\left\langle
	F \in \Coh^\beta(X) : 
	F \mbox{ is } \nu'_{\beta, \alpha} \mbox{-semistable with } 
	\nu'_{\beta, \alpha}(E) \leq 0
	\right\rangle. 	
\end{align*}

Here, $\nu'_{\beta, \alpha}$-stability 
is same as 
$\nu_{\beta, \frac{1}{2}(\beta^2+\alpha^2)}$-stability, 
and $\langle-\rangle$ denotes 
the extension closure in the abelian category 
$\Coh^\beta(X)$. 
We now define the {\it double-tilted heart} as 
\[
\mcA^{\beta, \alpha}:=
\left\langle
	\mcF'_{\beta, \alpha}[1], 
	\mcT'_{\beta, \alpha}
	\right\rangle 
\subset D^b(X).  
\]

We also define a central charge function 
$Z_{\beta, \alpha}^{a, b} \colon \Lambda \to \bC$ as 
\[
Z^{a, b}_{\alpha, \beta}:=
-\ch_3^{\beta}+bH\ch_2^\beta+aH^2\ch_1^\beta
+i\left(
	H\ch_2^\beta-\frac{1}{2}\alpha^2H^3\ch_0^\beta 
	\right) 
\]
for real numbers $a, b \in \bR$.  

Finally, for a real number $\gamma >0$, 
define $U_\gamma$ to be a set of vectors 
$(\alpha, \beta, a, b) \in \bR^4$ satisfying 
\begin{equation} \label{eq:Ugamma}
\alpha> 0, \quad 
\alpha^2+\left(\beta-\lfloor\beta \rfloor-\frac{1}{2} \right)^2 > \frac{1}{4}, 
\quad a > \frac{1}{6}\alpha^2+\frac{1}{2}|b|\alpha+\gamma. 
\end{equation}

\begin{thm}[cf. {\cite[Proposition 8.10]{bms16}}] \label{thm:familystab}
Let $X$ be a triple (resp. double) cover CY3, 
and put $\gamma:=2/9$ (resp. 1/3). 
Then there exists an injective continuous map 
\[
U_\gamma \hookrightarrow \Stab(X), \quad 
(\alpha, \beta, a, b) \mapsto 
\left(Z_{\beta, \alpha}^{a, b}, \mcA_{\beta, \alpha} \right).
\]

Furthermore, the orbit 
$\widetilde{\GL^+}(2, \bR) \cdot U_\gamma$ 
forms an open subset in the space $\Stab(X)$ 
of stability conditions. 
\end{thm}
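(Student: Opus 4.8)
The plan is to follow the construction of \cite[Proposition 8.10]{bms16} essentially verbatim, using Theorem \ref{thm:main2intro} (equivalently Corollary \ref{cor:bgconj}) in place of the $\Gamma=0$ version of the Bogomolov--Gieseker type inequality available in the Fano and quintic cases. Since $\mcA^{\beta,\alpha}$ is obtained from $\Coh(X)$ by two successive tilts, it is automatically the heart of a bounded t-structure on $D^b(X)$, and the existence of Harder--Narasimhan filtrations with respect to $\mu_{Z^{a,b}_{\beta,\alpha}}$ is obtained as in \cite[Section 8]{bms16}; this part is insensitive to the value of $\Gamma$. So the substantive work is to verify the remaining two axioms of Definition \ref{defin:stab}, positivity and the support property, and then continuity, injectivity and openness.

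\emph{Positivity.} By construction of the double tilt and by \eqref{eq:compare}, $\Im Z^{a,b}_{\beta,\alpha}=H\ch_2^\beta-\tfrac12\alpha^2H^3\ch_0^\beta$ is non-negative on $\mcA^{\beta,\alpha}$, so the point is to show that $\Im Z^{a,b}_{\beta,\alpha}(E)=0$ forces $\Re Z^{a,b}_{\beta,\alpha}(E)<0$ for every nonzero $E\in\mcA^{\beta,\alpha}$. As in \cite{bms16}, one reduces this to the case of a single $\nu_{\beta,\frac12(\alpha^2+\beta^2)}$-semistable sheaf (or a torsion object supported in low dimension), where the conclusion follows by combining the inequality $Q^\Gamma_{\alpha,\beta}(E)\ge 0$ from Theorem \ref{thm:main2intro} with the numerical constraint $a>\tfrac16\alpha^2+\tfrac12|b|\alpha+\gamma$ cutting out $U_\gamma$. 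This is precisely where the value $\gamma=2/9$ (resp. $1/3$) is used: it equals $\delta_X-H.\td_{X,2}/H^3$ for the triple (resp. double) cover CY3, so that $\Gamma=\gamma H^2=\delta_X H^2-\td_{X,2}$ is exactly the $1$-cycle for which Corollary \ref{cor:bgconj} is available, and the extra $+\gamma$ in the constraint on $a$ is what compensates for the $\Gamma$-correction inside $Q^\Gamma$.

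\emph{Support property.} I would take the quadratic form $q$ on $\Lambda$ to be $Q^\Gamma_{\alpha,\beta}$, possibly after adding a small positive multiple of $(\Im Z^{a,b}_{\beta,\alpha})^2$ (or of $(\Re Z^{a,b}_{\beta,\alpha})^2$), following \cite{bms16}. Two checks are needed. First, $q$ must be negative definite on the $2$-dimensional kernel $\ker Z^{a,b}_{\beta,\alpha}$; this uses the second bullet point in the proof of Theorem \ref{thm:reduction}, namely that $Q^\Gamma_{\alpha,\beta}$ is negative semidefinite on $\ker\overline{Z}_{\alpha,\beta}$, together with an elementary perturbation argument to upgrade semidefiniteness to strict negativity on the smaller kernel $\ker Z^{a,b}_{\beta,\alpha}$. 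Second, $q(\cl(E))\ge 0$ for every $\mu_{Z^{a,b}_{\beta,\alpha}}$-semistable $E\in\mcA^{\beta,\alpha}$; this follows from Theorem \ref{thm:main2intro} via the argument of \cite{bms16} which, up to the relevant inequalities, decomposes such an $E$ into $\nu_{\beta,\alpha}$-semistable objects, to which Theorem \ref{thm:main2intro} applies directly.

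\emph{Continuity, injectivity, openness, and the main obstacle.} Continuity of $(\alpha,\beta,a,b)\mapsto(Z^{a,b}_{\beta,\alpha},\mcA^{\beta,\alpha})$ is checked as in \cite{bms16}; injectivity is immediate, since $\beta$ and $\alpha^2$ are recovered from the functional $\Im Z^{a,b}_{\beta,\alpha}$ (after expanding the $\beta$-twist) and then $a,b$ from $\Re Z^{a,b}_{\beta,\alpha}$. For openness of the orbit I would invoke Bridgeland's theorem that the forgetful map $\pi\colon\Stab(X)\to\Hom(\Lambda,\bC)$ is a local homeomorphism, together with its $\widetilde{\GL^+}(2,\bR)$-equivariance ($\widetilde{\GL^+}(2,\bR)$ acting on $\Hom(\Lambda,\bC)$ through $\GL^+(2,\bR)$); composing the family map with $\pi$ gives $(g,(\alpha,\beta,a,b))\mapsto g\cdot Z^{a,b}_{\beta,\alpha}$, whose source and target both have real dimension $8$, and a routine Jacobian computation shows its differential is an isomorphism, so it is a local homeomorphism onto an open subset of $\Hom(\Lambda,\bC)\cong\bC^4$. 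Since $\pi$ is itself a local homeomorphism, the family map is then a local homeomorphism onto its image, so the $\widetilde{\GL^+}(2,\bR)$-orbit of our family is open in $\Stab(X)$. Essentially all of the geometric content is already packaged into Theorem \ref{thm:main2intro}, so what remains is bookkeeping; the point I expect to be most delicate is the positivity step, where one must track carefully that the $\Gamma$-correction absorbed into the constraint $a>\tfrac16\alpha^2+\tfrac12|b|\alpha+\gamma$ matches the $\Gamma=\gamma H^2$ for which $Q^\Gamma\ge 0$ holds, with no cancellation lost relative to the $\Gamma=0$ computation of \cite{bms16}, together with the closely related perturbation in the support-property step forced by the fact that $Q^\Gamma_{\alpha,\beta}$ is a priori only negative \emph{semi}definite on $\ker\overline{Z}_{\alpha,\beta}$.
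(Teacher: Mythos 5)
Your overall architecture (positivity via Theorem \ref{thm:main2intro} plus the constraint on $a$; Harder--Narasimhan filtrations for rational $(\alpha,\beta)$ and then deformation to real parameters; openness via the local homeomorphism to $\Hom(\Lambda,\bC)$ and a dimension count) is the same as the paper's, and your identification $\gamma=\delta_X-H.\td_{X,2}/H^3$, so that $\Gamma=\gamma H^2$ is exactly the cycle of Corollary \ref{cor:bgconj}, is correct. However, there is a genuine gap in your support-property step. First, $\ker Z^{a,b}_{\beta,\alpha}$ is \emph{not} contained in $\ker\overline{Z}_{\alpha,\beta}$: the vector with $(H^3\ch_0^\beta,H^2\ch_1^\beta,H\ch_2^\beta,\ch_3^\beta)=(0,1,0,a)$ lies in $\ker Z^{a,b}_{\beta,\alpha}$ but has $H^2\ch_1^\beta\neq 0$, hence lies outside $\ker\overline{Z}_{\alpha,\beta}$; these are two transverse $2$-planes in $\Lambda\otimes\bR\cong\bR^4$, so the semi-negative definiteness of $Q^{\Gamma}$ on $\ker\overline{Z}_{\alpha,\beta}$ (the second bullet in the proof of Theorem \ref{thm:reduction}) gives no a priori control of its sign on $\ker Z^{a,b}_{\beta,\alpha}$. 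Second, and more decisively, your proposed perturbations by $\bigl(\Im Z^{a,b}_{\beta,\alpha}\bigr)^2$ or $\bigl(\Re Z^{a,b}_{\beta,\alpha}\bigr)^2$ vanish identically on $\ker Z^{a,b}_{\beta,\alpha}$, so they cannot upgrade semidefiniteness to strict negativity there; and the unperturbed form (namely $Q^{\Gamma}$ at the tilt parameter $\tfrac12(\alpha^2+\beta^2)$, which equals $\alpha^2\overline{\Delta}_H+\overline{\nabla}^{\alpha,\beta,\gamma}_H$) is genuinely only semidefinite on that kernel: in the basis $\{(1,0,\tfrac12\alpha^2,\ast),(0,1,0,a)\}$ its matrix has vanishing $(1,1)$-entry.

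The perturbation that actually works, following \cite[Lemmas 8.5, 8.8]{bms16} and Proposition \ref{prop:support} of the paper, is to add a positive multiple of the \emph{discriminant}: take $q=K\overline{\Delta}_H+\overline{\nabla}^{\alpha,\beta,\gamma}_H$ with $K>\alpha^2$. Since $\overline{\Delta}_H\geq 0$ on tilt-semistable objects by the classical BG inequality, positivity on semistable classes is preserved, while negative definiteness on $\ker Z^{a,b}_{\beta,\alpha}$ is checked by an explicit $2\times 2$ determinant computation; the inequality $a>\tfrac16\alpha^2+\tfrac12|b|\alpha+\gamma$ defining $U_\gamma$ is precisely what makes the interval of admissible $K$ nonempty. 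With this correction the remainder of your outline goes through as in the paper.
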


We divide the proof of the above theorem 
into several steps. 
The arguments below are 
essentially the same as that in \cite[Section 8]{bms16}.

\begin{prop}[cf. {\cite[Theorem 8.6]{bms16}}] \label{prop:axiom12}
For every element $(\alpha, \beta, a, b) \in U_\gamma$ 
with $\alpha, \beta \in \mathbb{Q}$, 
the pair 
$\left(Z_{\beta, \alpha}^{a, b}, \mcA_{\beta, \alpha} \right)$ 
satisfies axioms (1) and (2) 
in Definition \ref{defin:stab}. 
\end{prop}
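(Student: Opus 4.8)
The plan is to run the argument of \cite[Section 8]{bms16}, feeding in Corollary \ref{cor:bgconj} (equivalently Theorem \ref{thm:main2intro}, which combines Corollary \ref{cor:bgconj} with Theorem \ref{thm:reduction}) in place of the quintic BG type inequality; the only new feature is the presence of the modified term $\Gamma=\gamma H^2$. First I would check that the set-up is consistent: since $\alpha>0$ we have $\tfrac{1}{2}(\beta^2+\alpha^2)>\tfrac{1}{2}\beta^2$, so tilt stability $\nu_{\beta,\frac{1}{2}(\beta^2+\alpha^2)}$ --- which by (\ref{eq:compare}) is the same as $\nu'_{\beta,\alpha}$-stability --- has the Harder--Narasimhan property, hence $(\mcT'_{\beta,\alpha},\mcF'_{\beta,\alpha})$ is a torsion pair and $\mcA^{\beta,\alpha}$ is the heart of a bounded t-structure. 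Moreover the middle condition defining $U_\gamma$, namely $\alpha^2+(\beta-\lfloor\beta\rfloor-\tfrac{1}{2})^2>\tfrac{1}{4}$, is readily seen to be equivalent to $\alpha^2>(\beta-\lfloor\beta\rfloor)(\lfloor\beta\rfloor+1-\beta)$, i.e.\ to $\tfrac{1}{2}(\beta^2+\alpha^2)>\tfrac{1}{2}\beta^2+\tfrac{1}{2}(\beta-\lfloor\beta\rfloor)(\lfloor\beta\rfloor+1-\beta)$. So Theorem \ref{thm:main2intro} applies to every $\nu_{\beta,\frac{1}{2}(\beta^2+\alpha^2)}$-semistable object $E$, yielding $Q^{\gamma H^2}_{\frac{1}{2}(\beta^2+\alpha^2),\beta}(E)\ge 0$.

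Next I would treat axiom (1). Since $H^2\ch_1^\beta\ge 0$ on $\Coh^\beta(X)$ and, where $H^2\ch_1^\beta>0$, one has $\Im Z^{a,b}_{\beta,\alpha}=H\ch_2^\beta-\tfrac{1}{2}\alpha^2H^3\ch_0^\beta=(H^2\ch_1^\beta)\,\nu'_{\beta,\alpha}$, the very definition of the torsion pair forces $\Im Z^{a,b}_{\beta,\alpha}(E)\ge 0$ for every $E\in\mcA^{\beta,\alpha}$. The substantive point is that $\Im Z^{a,b}_{\beta,\alpha}(E)=0$ with $E\ne 0$ forces $\Re Z^{a,b}_{\beta,\alpha}(E)<0$. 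Decomposing $E$ via $\mcA^{\beta,\alpha}=\langle\mcF'_{\beta,\alpha}[1],\mcT'_{\beta,\alpha}\rangle$, then passing to the HN filtrations with respect to $\nu'_{\beta,\alpha}$, and keeping track of the sign of $\Im Z^{a,b}_{\beta,\alpha}$ on $\nu'$-semistable objects --- where the classical Bogomolov inequality pins down the possible numerical invariants in the borderline case $H^2\ch_1^\beta=0$ --- one finds that $E$ is an iterated extension of objects of two kinds: (a) sheaves supported in dimension zero, and (b) objects $B[1]$ with $B$ a $\nu'_{\beta,\alpha}$-semistable object satisfying $\nu'_{\beta,\alpha}(B)=0$ and $H^2\ch_1^\beta(B)>0$. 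As it suffices to bound $\Re Z^{a,b}_{\beta,\alpha}$ on each kind, and kind (a) is immediate ($\Re Z^{a,b}_{\beta,\alpha}=-\ch_3<0$), this reduces axiom (1) to kind (b).

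For an object $B$ of kind (b): it is $\nu_{\beta,\frac{1}{2}(\beta^2+\alpha^2)}$-semistable, so the classical Bogomolov--Gieseker inequality $\overline{\Delta}_H(B)\ge 0$ (cf.\ \cite[Theorem 3.5]{bms16}), evaluated on the locus $H\ch_2^\beta(B)=\tfrac{1}{2}\alpha^2H^3\ch_0^\beta(B)$ on which $B$ lives, gives $H^2\ch_1^\beta(B)\ge\alpha\,|H^3\ch_0^\beta(B)|$; and substituting the same relation into $Q^{\gamma H^2}_{\frac{1}{2}(\beta^2+\alpha^2),\beta}(B)\ge 0$ the quadratic form collapses to
\[
(H^2\ch_1^\beta(B))\Bigl((\alpha^2+6\gamma)\,H^2\ch_1^\beta(B)-6\,\ch_3^\beta(B)\Bigr)\ge 0,
\]
whence $\ch_3^\beta(B)\le(\tfrac{1}{6}\alpha^2+\gamma)\,H^2\ch_1^\beta(B)$. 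Plugging these two bounds into
\[
\Re Z^{a,b}_{\beta,\alpha}(B[1])=\ch_3^\beta(B)-b\,H\ch_2^\beta(B)-a\,H^2\ch_1^\beta(B)
\]
and using $a>\tfrac{1}{6}\alpha^2+\tfrac{1}{2}|b|\alpha+\gamma$ gives $\Re Z^{a,b}_{\beta,\alpha}(B[1])<0$. This step --- which is exactly where the values $\gamma=2/9$ (resp.\ $1/3$) and the precise shape of $U_\gamma$ are used --- is the heart of the matter and the main obstacle; everything else is a direct transcription of \cite[Section 8]{bms16}.

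Finally, axiom (2). When $\alpha,\beta\in\mathbb{Q}$ the function $\Im Z^{a,b}_{\beta,\alpha}=H\ch_2^\beta-\tfrac{1}{2}\alpha^2H^3\ch_0^\beta$ takes values in a discrete subgroup of $\bR$ (the image of the lattice $\Lambda$), and the heart $\mcA^{\beta,\alpha}$ is Noetherian; the latter follows from the Noetherianity of $\Coh^\beta(X)$ for rational $\beta$, exactly as in \cite[Section 8]{bms16}. Granting these two facts, the existence of Harder--Narasimhan filtrations in $\mcA^{\beta,\alpha}$ with respect to $\mu_{Z^{a,b}_{\beta,\alpha}}$ follows from the standard argument for Noetherian hearts with discrete imaginary part (see the proof of \cite[Theorem 8.6]{bms16}, or \cite{bri07}). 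Apart from the kind-(b) computation above, the Noetherianity of the double-tilted heart is the only other technical point requiring care, and it is entirely parallel to the quintic case.
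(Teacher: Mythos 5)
Your proposal is correct and follows essentially the same route as the paper: reduce axiom (1) to bounding $\Re Z^{a,b}_{\beta,\alpha}$ on $\nu'_{\beta,\alpha}$-semistable objects $F$ with $\nu'_{\beta,\alpha}(F)=0$, then combine the $\ch_3^\beta$-bound from Theorem \ref{thm:main2intro} (via (\ref{eq:compare}) and the verification that the middle condition of $U_\gamma$ is exactly the hypothesis of that theorem) with the classical Bogomolov inequality on the locus $H\ch_2^\beta=\tfrac{1}{2}\alpha^2H^3\ch_0^\beta$, and dispose of axiom (2) by Noetherianity and discreteness of $\Im Z$ for rational $(\alpha,\beta)$ as in \cite[Theorem 8.6]{bms16}. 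The only difference is that you spell out the decomposition into zero-dimensional sheaves and shifted semistable objects, which the paper simply cites from \cite{bms16}.
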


\begin{proof}
First we check the axiom (1) 
in Definition \ref{defin:stab} 
for the pair 
$\left(Z_{\beta, \alpha}^{a, b}, \mcA_{\beta, \alpha} \right)$. 
As in the proof of \cite[Theorem 8.6]{bms16}, 
it is enough to show the inequality 
$Z_{\beta, \alpha}^{a, b}(F[1]) <0$ 
for every $\nu'_{\beta, \alpha}$-semistable object 
$F$ with $\nu'_{\beta, \alpha}(F)=0$. 
By the inequality 
$\alpha^2+\left(\beta-\lfloor\beta \rfloor-\frac{1}{2} \right)^2 
> \frac{1}{4}$ 
in (\ref{eq:Ugamma}), 
we can apply Theorem \ref{thm:main2intro} to 
the object $F$. 
Noting the equation (\ref{eq:compare}), 
we get 
\begin{equation} \label{eq:ch3ineq}
\ch_3^\beta(F) \leq 
\left(\gamma+\frac{1}{6}\alpha^2 \right)H^2\ch_1^\beta(F). 
\end{equation}
Furthermore, together with the assumption 
$H\ch_2^\beta(F)=\frac{1}{2}\alpha^2H^3\ch_0^\beta$, 
the classical BG inequality (cf. \cite[Theorem 3.5]{bms16}) 
$\overline{\Delta}_H(F) \geq 0$ gives the inequality 
\begin{equation} \label{eq:ch2ineq}
\left(H\ch_2^\beta(F) \right)^2 
\leq \frac{1}{4}\alpha^2\left(H^2\ch_1^\beta(F) \right)^2. 
\end{equation}

By the inequalities 
(\ref{eq:ch3ineq}) and (\ref{eq:ch2ineq}), 
we obtain 
\begin{align*}
Z_{\beta, \alpha}^{a, b}(F[1])
&=\Re Z_{\beta, \alpha}^{a, b}(F[1]) \\
&\leq \left(\gamma+\frac{1}{6}\alpha^2 \right)H^2\ch_1^\beta(F) 
	+\frac{1}{2}|b|\alpha H^2\ch_1^\beta(F) 
	-aH^2\ch_1^\beta(F) 
<0. 	
\end{align*}

Now the axiom (2) is also satisfied 
since we assume $\alpha, \beta \in \mathbb{Q}$ 
(see the proof of \cite[Theorem 8.6]{bms16} for the detail). 
\end{proof}

Next we discuss about the support property. 
Let us put 
\begin{align*}
\overline{\nabla}^{\alpha, \beta, \gamma}_H(E)
&:=3\gamma\alpha^2\left(H^3\ch_0^\beta(E) \right)^2 
+2\left(H\ch^\beta_2(E) \right)\left(
	2H\ch^\beta_2(E)-3\gamma H^3 \ch^\beta_0(E)
	\right) \\
&\quad -6\left(H^2\ch^\beta_1(E) \right) \left(
	\ch^\beta_3(E)-\gamma H^2\ch_1^\beta(E) 
	\right). 
\end{align*}
for an object $E \in D^b(X)$. 

\begin{prop}[cf. {\cite[Lemmas 8.5, 8.8]{bms16}}] \label{prop:support}
Fix an element $(\alpha, \beta, a, b) \in U_\gamma$. 
Then there exists an interval 
$I^{a, b}_{\alpha, \gamma} \subset \bR$ such that 
for every $K \in I^{a, b}_{\alpha, \gamma}$, 
the quadratic form 
$Q^{\alpha, \beta, \gamma}_K:=
K\overline{\Delta}_H
+\overline{\nabla}^{\alpha, \beta, \gamma}_H$ 
is negative definite on the kernel of 
the central charge function 
$Z_{\beta, \alpha}^{a, b}$. 

Furthermore, if we assume 
$(\alpha, \beta) \in \mathbb{Q}$, 
then every $Z_{\beta, \alpha}^{a, b}$-semistable object $E$ 
satisfies the inequality 
$Q^{\alpha, \beta, \gamma}_K(E) \geq 0$. 
\end{prop}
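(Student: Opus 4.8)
The plan is to prove the two assertions in turn, both along the lines of \cite[Section 8]{bms16}. Write $r:=H^3\ch_0^\beta(E)$, $c:=H^2\ch_1^\beta(E)$, $d_2:=H\ch_2^\beta(E)$, $d_3:=\ch_3^\beta(E)$, and set $\alpha':=\tfrac12(\alpha^2+\beta^2)$, so that by (\ref{eq:compare}) the notion of $\nu'_{\beta,\alpha}$-stability agrees with $\nu_{\beta,\alpha'}$-stability and $2\alpha'-\beta^2=\alpha^2$. The starting observation is the identity
\[
Q^{\alpha, \beta, \gamma}_K(E)=(K-\alpha^2)\,\overline{\Delta}_H(E)+Q^{\gamma H^2}_{\alpha',\beta}(E),
\]
which one verifies by expanding the definition of $\overline{\nabla}^{\alpha, \beta, \gamma}_H$ and comparing with the quadratic form $Q^\Gamma_{\alpha,\beta}$ of Question~\ref{conj:BG} for $\Gamma=\gamma H^2$ (so that $\Gamma.H/H^3=\gamma$ and $\Gamma\ch_1^\beta=\gamma H^2\ch_1^\beta$). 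This identity is the bridge between the two halves of the statement.

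First I would establish negative-definiteness. On $\Ker Z_{\beta, \alpha}^{a, b}$ one has $d_2=\tfrac12\alpha^2 r$ (from $\Im Z_{\beta, \alpha}^{a, b}=0$) and $d_3=\tfrac12 b\alpha^2 r+ac$ (from $\Re Z_{\beta, \alpha}^{a, b}=0$); substituting these into $\overline{\Delta}_H=c^2-2rd_2$ and into $\overline{\nabla}^{\alpha, \beta, \gamma}_H$ and simplifying, the restriction of $Q^{\alpha, \beta, \gamma}_K$ to the kernel becomes the binary form
\[
(\alpha^4-K\alpha^2)\,r^2-3b\alpha^2\,rc+\bigl(K-6(a-\gamma)\bigr)\,c^2 .
\]
This is negative definite precisely when $K>\alpha^2$ and $(K-\alpha^2)(6a-6\gamma-K)>\tfrac94 b^2\alpha^2$, and this pair of conditions has a solution in $K$ — an open interval about $\tfrac12(\alpha^2+6a-6\gamma)$ — exactly when $a>\tfrac16\alpha^2+\tfrac12|b|\alpha+\gamma$, which is the third inequality cutting out $U_\gamma$ in (\ref{eq:Ugamma}). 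So one takes $I^{a, b}_{\alpha, \gamma}$ to be that interval; in particular $I^{a, b}_{\alpha, \gamma}\subset(\alpha^2,\infty)$.

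For the inequality on semistable objects I would first treat $\nu'_{\beta,\alpha}$-semistable objects $F$ (equivalently, up to shift, $\nu_{\beta,\alpha'}$-semistable objects). The classical Bogomolov-Gieseker inequality \cite[Theorem 3.5]{bms16} gives $\overline{\Delta}_H(F)\ge 0$. On the other hand, the hypothesis $\alpha^2+(\beta-\lfloor\beta\rfloor-\tfrac12)^2>\tfrac14$ from (\ref{eq:Ugamma}) is equivalent to $\alpha^2>\{\beta\}(1-\{\beta\})$, hence to $\alpha'>\tfrac12\beta^2+\tfrac12(\beta-\lfloor\beta\rfloor)(\lfloor\beta\rfloor+1-\beta)$, which is exactly the range of validity of Theorem~\ref{thm:main2intro}; since the modified cycle there is $\Gamma=\gamma H^2$ with $\gamma=2/9$ (resp. $1/3$), the value fixed in Corollary~\ref{cor:bgconj}, Theorem~\ref{thm:main2intro} gives $Q^{\gamma H^2}_{\alpha',\beta}(F)\ge 0$. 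As $K-\alpha^2>0$ for $K\in I^{a, b}_{\alpha, \gamma}$, the displayed identity then gives $Q^{\alpha, \beta, \gamma}_K(F)\ge 0$. For a general $Z_{\beta, \alpha}^{a, b}$-semistable object $E$ I would run the wall-crossing argument of \cite[Section 8]{bms16} (cf.\ the proof of \cite[Lemma 8.8]{bms16}): by connecting the stability condition to a tilt-type degeneration, $E$ is assembled at each wall out of subquotients whose central charges lie on a common ray, and because $Q^{\alpha, \beta, \gamma}_K$ is negative definite on $\Ker Z_{\beta, \alpha}^{a, b}$ it obeys a Hodge-index type (``reverse Cauchy-Schwarz'') inequality for such aligned classes; hence $Q^{\alpha, \beta, \gamma}_K\ge 0$ propagates from the $\nu'_{\beta,\alpha}$-semistable case to $E$, while the boundary locus $\Im Z_{\beta, \alpha}^{a, b}(E)=0$ is handled exactly as in loc.\ cit. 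The rationality assumption $(\alpha,\beta)\in\mathbb{Q}$ enters only to ensure local finiteness of walls and Noetherianity of $\mcA^{\beta,\alpha}$, as in \cite{bms16}.

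I expect the only genuine obstacle to be bookkeeping rather than structure: one must check that the negative-definiteness constraint defining $I^{a, b}_{\alpha, \gamma}$ collapses exactly to $a>\tfrac16\alpha^2+\tfrac12|b|\alpha+\gamma$ — this is precisely why the region (\ref{eq:Ugamma}) takes the form it does, and why the constants $\gamma=2/9$ and $1/3$ (imported from Corollary~\ref{cor:bgconj} via $\delta_X$) enter — and one must keep the two ``$\alpha$''s straight, namely the $\alpha$ of the double tilt against $\alpha'=\tfrac12(\alpha^2+\beta^2)$ of tilt stability, so that Theorem~\ref{thm:main2intro} is genuinely applicable throughout $U_\gamma$. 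Once these numerical identifications are made, what remains is a faithful transcription of \cite[Section 8]{bms16}.
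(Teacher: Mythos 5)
Your proof is correct and follows essentially the same route as the paper: the same kernel basis and $2\times 2$ matrix computation for negative definiteness (your binary form is exactly the paper's matrix (\ref{eq:matrix})), the same reduction of the solvability of the two conditions on $K$ to $a>\tfrac16\alpha^2+\tfrac12|b|\alpha+\gamma$, and the same appeal to \cite[Lemma 8.8]{bms16} together with Theorem \ref{thm:main2intro} for the second assertion. Your explicit identity $Q^{\alpha,\beta,\gamma}_K=(K-\alpha^2)\overline{\Delta}_H+Q^{\gamma H^2}_{\alpha',\beta}$ merely makes explicit how Theorem \ref{thm:main2intro} enters, which the paper leaves implicit.
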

\begin{proof}
Let us prove the first assertion. 
The vectors 
$\left(1, 0, \frac{1}{2}\alpha^2, \frac{1}{2}b\alpha \right)$ 
and $(0, 1, 0, a)$ forms the basis 
the kernel of 
$Z_{\beta, \alpha}^{a, b}$. 
With respect to this basis, 
the quadratic form 
$K\overline{\Delta}_H
+\overline{\nabla}^{\alpha, \beta, \gamma}_H$ 
is represented by the matrix 
\begin{equation} \label{eq:matrix}
\left(
\begin{array}{cc}
-\alpha^2K+\alpha^4 & -\frac{3}{2}b\alpha^2 \\
-\frac{3}{2}b\alpha^2 & K-6(a-\gamma)
\end{array}
\right). 
\end{equation}

When $b=0$, the matrix (\ref{eq:matrix}) 
is negative definite if and only if 
$K \in \left(\alpha^2, 6(a-\gamma) \right)
=:I^{a, b=0}_{\alpha, \gamma}$. 
This interval is non-empty by (\ref{eq:Ugamma}). 

When $b \neq 0$, we also need to require 
the determinant of the matrix (\ref{eq:matrix}) 
to be positive, i.e., 
\begin{equation} \label{eq:detmatrix}
\alpha^2\left(
	-K^2+\left(6(a-\gamma)+\alpha^2 \right)K
	-6(a-\gamma)\alpha^2
-\frac{9}{4}b^2\alpha^2
\right)>0. 
\end{equation}

The solution space $K \in I_{\alpha, \gamma}^{a, b}$ 
of the inequality (\ref{eq:detmatrix}) 
forms a non-empty open interval 
since we have, by (\ref{eq:Ugamma}), 
\[
a-\gamma > \frac{1}{6}\alpha^2+\frac{1}{2}|b|\alpha. 
\]

We can prove the second assertion 
as in \cite[Lemma 8.8]{bms16}, 
using the BG type inequality 
obtained in Theorem \ref{thm:main2intro}. 
\end{proof}

Finally we are able to prove Theorem \ref{thm:familystab}. 

\begin{proof}[Proof of Theorem \ref{thm:familystab}]
By Propositions 
\ref{prop:axiom12} and \ref{prop:support}, 
the pair 
$\left(Z_{\beta, \alpha}^{a, b}, 
\mcA^{\beta, \alpha} \right)$ 
is a stability condition on $D^b(X)$ 
for every element 
$(\alpha, \beta, a, b) \in U_\gamma$ 
with $\alpha, \beta \in \mathbb{Q}$. 
We can deform them to the real parameters 
$(\alpha, \beta)$ by the support property 
in Proposition \ref{prop:support}. 
See \cite[Proposition 8.10]{bms16} 
for the precise proof. 
\end{proof}


\begin{thebibliography}{10}

\bibitem[AB13]{ab13}
D.~Arcara and A.~Bertram.
\newblock Bridgeland-stable moduli spaces for {$K$}-trivial surfaces.
\newblock {\em J. Eur. Math. Soc. (JEMS)}, 15(1):1--38, 2013.
\newblock With an appendix by Max Lieblich.

\bibitem[BMS16]{bms16}
A.~Bayer, E.~Macr\`\i, and P.~Stellari.
\newblock The space of stability conditions on abelian threefolds, and on some
  {C}alabi-{Y}au threefolds.
\newblock {\em Invent. Math.}, 206(3):869--933, 2016.

\bibitem[BMT14]{bmt14a}
A.~Bayer, E.~Macr{\`{\i}}, and Y.~Toda.
\newblock Bridgeland stability conditions on threefolds {I}:
  {B}ogomolov-{G}ieseker type inequalities.
\newblock {\em J. Algebraic Geom.}, 23(1):117--163, 2014.

\bibitem[BMSZ17]{bmsz17}
M.~Bernardara, E.~Macr\`\i, B.~Schmidt, and X.~Zhao.
\newblock Bridgeland stability conditions on {F}ano threefolds.
\newblock {\em \'Epijournal Geom. Alg\'ebrique}, 1:Art. 2, 24, 2017.

\bibitem[Bri07]{bri07}
T.~Bridgeland.
\newblock Stability conditions on triangulated categories.
\newblock {\em Ann. of Math. (2)}, 166(2):317--345, 2007.

\bibitem[Bri08]{bri08}
T.~Bridgeland.
\newblock Stability conditions on {$K3$} surfaces.
\newblock {\em Duke Math. J.}, 141(2):241--291, 2008.

\bibitem[Fey19]{fey19}
S.~Feyzbakhsh.
\newblock Mukai’s program (reconstructing a k3 surface from a curve) via
  wall-crossing.
\newblock {\em Journal f^^c3^^bcr die reine und angewandte Mathematik}, (0),
  2019.

\bibitem[HRS96]{hrs96}
D.~Happel, I.~Reiten, and S.~O. Smal{\o}.
\newblock Tilting in abelian categories and quasitilted algebras.
\newblock {\em Mem. Amer. Math. Soc.}, 120(575):viii+ 88, 1996.

\bibitem[KW14]{kw14}
A.~Kanazawa and P.~M.~H. Wilson.
\newblock Trilinear forms and {C}hern classes of {C}alabi-{Y}au threefolds.
\newblock {\em Osaka J. Math.}, 51(1):203--213, 2014.

\bibitem[Kos17]{kos17}
N.~Koseki.
\newblock Stability conditions on product threefolds of projective spaces and
  abelian varieties.
\newblock {\em Bull. Lond. Math. Soc.}, 50(2):229--244, 2017.

\bibitem[Kos20]{kos20}
N.~Koseki.
\newblock Stability conditions on threefolds with nef tangent bundles.
\newblock {\em Adv. Math.}, 372:107316, 29, 2020.

\bibitem[Li19a]{li19b}
C.~Li.
\newblock On stability conditions for the quintic threefold.
\newblock {\em Invent. Math.}, 218(1):301--340, 2019.

\bibitem[Li19b]{li19a}
C.~Li.
\newblock Stability conditions on {F}ano threefolds of {P}icard number 1.
\newblock {\em J. Eur. Math. Soc. (JEMS)}, 21(3):709--726, 2019.

\bibitem[Liu21a]{liu21a}
S.~Liu.
\newblock Stability condition on {C}alabi-{Y}au threefold of complete
  intersection of quadratic and quartic hypersurfaces, 2021.

\bibitem[Liu21b]{liu21b}
Y.~Liu.
\newblock Stability conditions on product varieties.
\newblock {\em J. Reine Angew. Math.}, 770:135--157, 2021.

\bibitem[Maci14]{maci14}
A.~Maciocia.
\newblock Computing the walls associated to {B}ridgeland stability conditions
  on projective surfaces.
\newblock {\em Asian J. Math.}, 18(2):263--279, 2014.

\bibitem[MP16a]{mp16a}
A.~Maciocia and D.~Piyaratne.
\newblock Fourier-{M}ukai transforms and {B}ridgeland stability conditions on
  abelian threefolds.
\newblock {\em Algebr. Geom.}, 2(3):270--297, 2015.

\bibitem[MP16b]{mp16b}
A.~Maciocia and D.~Piyaratne.
\newblock Fourier-{M}ukai transforms and {B}ridgeland stability conditions on
  abelian threefolds {II}.
\newblock {\em Internat. J. Math.}, 27(1):1650007, 27, 2016.

\bibitem[Macr14]{mac14}
E.~Macr{\`{\i}}.
\newblock A generalized {B}ogomolov-{G}ieseker inequality for the
  three-dimensional projective space.
\newblock {\em Algebra Number Theory}, 8(1):173--190, 2014.

\bibitem[MS19]{ms19}
C.~Martinez and B.~Schmidt.
\newblock Bridgeland stability on blow ups and counterexamples.
\newblock {\em Math. Z.}, 292(3-4):1495--1510, 2019.

\bibitem[Piy17]{piy17}
D.~Piyaratne.
\newblock Stability conditions, bogomolov-gieseker type inequalities and fano
  3-folds, 2017.

\bibitem[Rud94]{rud94}
Alexei~N. Rudakov.
\newblock A description of {C}hern classes of semistable sheaves on a quadric
  surface.
\newblock {\em J. Reine Angew. Math.}, 453:113--135, 1994.

\bibitem[Sch14]{sch14}
B.~Schmidt.
\newblock A generalized {B}ogomolov-{G}ieseker inequality for the smooth
  quadric threefold.
\newblock {\em Bull. Lond. Math. Soc.}, 46(5):915--923, 2014.

\bibitem[Sch17]{sch17}
B.~Schmidt.
\newblock Counterexample to the generalized {B}ogomolov-{G}ieseker inequality
  for threefolds.
\newblock {\em Int. Math. Res. Not. IMRN}, (8):2562--2566, 2017.

\bibitem[Sun19]{sun19}
H.~Sun.
\newblock Bridgeland stability conditions on some threefolds of general type,
  2019.

\bibitem[Tod14]{tod14b}
Y.~Toda.
\newblock Gepner type stability conditions on graded matrix factorizations.
\newblock {\em Algebr. Geom.}, 1(5):613--665, 2014.

\bibitem[Tod17]{tod17}
Y.~Toda.
\newblock Gepner point and strong {B}ogomolov-{G}ieseker inequality for quintic
  3-folds.
\newblock In {\em Higher dimensional algebraic geometry---in honour of
  {P}rofessor {Y}ujiro {K}awamata's sixtieth birthday}, volume~74 of {\em Adv.
  Stud. Pure Math.}, pages 381--405. Math. Soc. Japan, Tokyo, 2017.

\end{thebibliography}
\end{document}